\documentclass[12pt,a4paper,reqno,oneside]{amsart}
 
\usepackage[utf8]{inputenc}
\usepackage[T1]{fontenc}
\usepackage{lmodern}
\usepackage{amsmath}
\usepackage{amssymb}
\usepackage{amsthm}
\allowdisplaybreaks
\usepackage{geometry}
\usepackage{soul}
\usepackage{graphicx}
\usepackage{bbm}
\usepackage{float}
\usepackage{enumerate}
\usepackage{scrhack}
\usepackage{hyperref}
\usepackage{cleveref}

\newtheoremstyle{theorem}	
   {15pt}       							
   {15pt}      							
   {\itshape}  							
   {\parindent} 							
   {\bfseries} 							
   {}         									
   {.7em}      							
   {}          								

\newtheoremstyle{definition}	
  {15pt}       							
  {15pt}      								
  {} 		 									
  {\parindent} 							
  {\bfseries} 							
  {}         									
  {.7em}      								
  {}          									
  
\newtheoremstyle{remark}		
  {15pt}       							
  {15pt}      								
  {} 											
  {\parindent} 							
  {\itshape} 								
  {.}         									
  {.7em}      								
  {}          									

\theoremstyle{theorem}
\newtheorem{theorem}{Theorem}[section]
\newtheorem{corollary}[theorem]{Corollary}
\newtheorem{lemma}[theorem]{Lemma}
\newtheorem{proposition}[theorem]{Proposition}

\theoremstyle{remark}
\newtheorem{remark}[theorem]{Remark}

\theoremstyle{definition}

\newtheorem{definition}[theorem]{Definition}

\newcommand{\Bb}{\mathbb{B}}
\newcommand{\Dbb}{\mathbb{D}}
\newcommand{\Ebb}{\mathbb{E}}
\newcommand{\Fbb}{\mathbb{F}}

\newcommand{\Nbb}{\mathbb{N}}
\newcommand{\Pbb}{\mathbb{P}}
\newcommand{\Qbb}{\mathbb{Q}}
\newcommand{\Rbb}{\mathbb{R}}

\newcommand{\Acal}{\mathcal{A}}

\newcommand{\Ccal}{\mathcal{C}}
\newcommand{\Dcal}{\mathcal{D}}
\newcommand{\Ecal}{\mathcal{E}}
\newcommand{\Fcal}{\mathcal{F}}
\newcommand{\Gcal}{\mathcal{G}}
\newcommand{\Hcal}{\mathcal{H}}

\newcommand{\Kcal}{\mathcal{K}}
\newcommand{\Lcal}{\mathcal{L}}

\newcommand{\Ncal}{\mathcal{N}}

\newcommand{\Scal}{\mathcal{S}}
\newcommand{\Wcal}{\mathcal{W}}
\newcommand{\Xcal}{\mathcal{X}}
\newcommand{\Ycal}{\mathcal{Y}}

\newcommand{\Afrak}{\mathfrak{A}}
\newcommand{\Bfrak}{\mathfrak{B}}

\newcommand{\Ifrak}{\mathfrak{I}}

\newcommand{\Kfrak}{\mathfrak{K}}
\newcommand{\Lfrak}{\mathfrak{L}}
\newcommand{\Nfrak}{\mathfrak{N}}

\DeclareMathOperator*{\VAR}{Var}
\DeclareMathOperator*{\COV}{Cov}
\newcommand{\Lip}{\text{Lip}}

\DeclareMathOperator*{\Perm}{perm}
\DeclareMathOperator*{\Id}{Id}

\newcommand{\btilde}{\widetilde{b}}
\newcommand{\Btilde}{\widetilde{B}}

\newcommand{\etilde}{\widetilde{e}}
\newcommand{\Gtilde}{\widetilde{G}}

\newcommand{\Xtilde}{\widetilde{X}}
\newcommand{\ytilde}{\widetilde{y}}

\newcommand{\pitilde}{\widetilde{\pi}}

\newcommand{\Bhat}{\widehat{B}}

\newcommand{\Ep}[2]{\Ebb\negthinspace\left[\left\vert #1 \right\vert^{#2} \right]^{\frac{1}{#2}}\negthinspace}
\newcommand{\EpO}[2]{\Ebb\negthinspace\left[\left\vert #1 \right\vert^{#2} \right]\negthinspace}
\newcommand{\Eabs}[1]{\Ebb\negthinspace\left[\left\vert #1 \right\vert \right]\negthinspace}
\newcommand{\EW}[1]{\Ebb\negthinspace\left[ #1 \right]\negthinspace}

\newcommand{\EH}[1]{\Ebb\negthinspace\left[\left\Vert #1 \right\Vert_{\Hcal}^2 \right]\negthinspace}
\newcommand{\EHp}[1]{\Ebb\negthinspace\left[\left\Vert #1 \right\Vert_{\Hcal}^2 \right]^{\frac{1}{2}}\negthinspace}
\newcommand{\ETp}[2]{\Ebb \negthinspace\left[ \int_0^T \left\Vert #1 \right\Vert_{\Hcal}^{#2} dt\right]^{\frac{1}{#2}}\negthinspace}

\newcommand{\ND}[1]{\left\Vert #1 \right\Vert_{\Dbb^{1,2}(\Hcal)}^2}
\newcommand{\NDO}[1]{\left\Vert #1 \right\Vert_{\Dbb^{1,2}(\Hcal)}}

\newcommand{\Var}[1]{\VAR \negthinspace\left( #1 \right)\negthinspace}
\newcommand{\Cov}[1]{\COV \negthinspace\left( #1 \right)\negthinspace}
\newcommand{\perm}[1]{\Perm \negthinspace \left( #1 \right) \negthinspace}

\usepackage{color}
\definecolor{darkgreen}{rgb}{0, .5, 0}
\definecolor{darkred}{rgb}{.5, 0, 0}

\newcommand\blfootnote[1]{%
  \begingroup
  \renewcommand\thefootnote{}\footnote{#1}%
  \addtocounter{footnote}{-1}%
  \endgroup
}

\begin{document}


\title[Restoration of Well-Posedness of Inf-dim Singular ODE's via Noise]{Restoration of Well-Posedness of Infinite-dimensional Singular ODE's via Noise}
\author[D. Ba\~nos]{David Ba\~nos}
\address{D. Ba\~nos: Department of Mathematics, University of Oslo, Moltke Moes vei 35, P.O. Box 1053 Blindern, 0316 Oslo, Norway.}
\email{davidru@math.uio.no}
\author[M.Bauer]{Martin Bauer}
\address{M. Bauer: Department of Mathematics, LMU, Theresienstr. 39, D-80333 Munich, Germany.}
\email{bauer@math.lmu.de}
\author[T. Meyer-Brandis]{Thilo Meyer-Brandis}
\address{T. Meyer-Brandis: Department of Mathematics, LMU, Theresienstr. 39, D-80333 Munich, Germany.}
\email{meyerbra@math.lmu.de} 
\author[F. Proske]{Frank Proske}
\address{F. Proske: Department of Mathematics, University of Oslo, Moltke Moes vei 35, P.O. Box 1053 Blindern, 0316 Oslo, Norway.}
\email{proske@math.uio.no}
\maketitle
\blfootnote{The research is partially supported by the FINEWSTOCH (NFR-ISP) project.}


\textbf{Abstract.} In this paper we aim at generalizing the results of A. K. Zvonkin \cite{Zvon74} and A. Y. Veretennikov \cite{Ver79} on the construction of unique strong solutions of stochastic differential equations with singular drift vector field and additive noise in the Euclidean space to the case of infinite-dimensional state spaces. The regularizing driving noise in our equation is chosen to be a locally non-H\"{o}lder continuous Hilbert space valued process of fractal nature, which does not allow for the use of classical construction techniques for strong solutions from PDE or semimartingale theory. Our approach, which does not resort to the Yamada-Watanabe principle for the verification of pathwise uniqueness of solutions, is based on Malliavin calculus. \\[0.5cm]

\textbf{Keywords.} Malliavin calculus; fractional Brownian motion; $L^2$-compactness criterion; strong solutions of SDEs; irregular drift coefficient

\section{Introduction}
	The main objective of this paper is the construction of (unique) strong solutions of infinite-dimensional stochastic differential equations (SDEs) with a singular drift and additive noise. In fact, we want to derive our results from the perspective of a rather recently established theory of stochastic regularization (see \cite{Flandoli} and the references therein) with respect to a new general method based on Malliavin calculus and another variational technique which can be applied to different types of SDEs and stochastic partial differential equations (SPDEs).

In order to explain the concept of stochastic regularization, let us consider the first-order ordinary differential equation (ODE)

\begin{equation}\label{ODE}
	\frac{d}{dt}X_{t}^{x}=b(t,X_{t}^{x}),\quad X_{0}=x\in \mathcal{H},\quad t \in [0,T]
\end{equation}
	for a vector field $b:[0,T]\times \mathcal{H}\rightarrow \mathcal{H}$, where $\mathcal{H}$ is a separable Hilbert space with norm $\left\Vert \cdot \right\Vert_\Hcal $.

Using Picard iteration, it is fairly straight forward to see that the ODE \eqref{ODE} has a unique (global) solution $(X_t^{x})_{ t\in [0, T]}$, if the driving vector field $b$ satisfies a linear growth and Lipschitz condition, that is 
\begin{equation*}
	\left\Vert b(t,x)\right\Vert_\Hcal \leq C_{1}(1+\left\Vert x\right\Vert_\Hcal )
\end{equation*}
	and
\begin{equation*}
	\left\Vert b(t,x)-b(t,y)\right\Vert_\Hcal \leq C_{2}\left\Vert x-y\right\Vert_\Hcal
\end{equation*}
	for all $t,x$ and $y$ with constants $C_{1},C_{2}<\infty $.

However, well-posedness in the sense of existence and uniqueness of solutions may fail, if the vector field $b$ lacks regularity, that is if e.g.~$b$ is not Lipschitz continuous. In this case, the ODE \eqref{ODE} may not even admit the existence of a solution in the case $\mathcal{H}=\mathbb{R}^{d}$.

On the other hand, the situation changes, if one integrates on both sides of the ODE (\ref{ODE}) and adds a "regularizing" noise to the right hand side of the resulting integral equation.

More precisely, if $\mathcal{H}=\mathbb{R}^{d}$, well-posedness of the ODE \eqref{ODE} can be restored via regularization by a Brownian (additive) noise, that is by a perturbation of the ODE \eqref{ODE} given by the SDE
\begin{equation}\label{SmallNoise}
	dX_{t}^{x}=b(t,X_{t}^{x})dt+\varepsilon dB_{t}, \quad t\in [0,T], \quad X_0^x = x \in \mathbb{R}^{d},
\end{equation}%
where $(B_t)_{t\in[0,T]}$ is a Brownian motion in $\mathbb{R}^{d}$ and $\varepsilon >0$.

If the vector field $b$ is merely bounded and measurable, it turns out that the SDE \eqref{SmallNoise} -- regardless how small $\varepsilon $ is -- possesses a unique (global) strong solution, that is a solution $(X_t^x)_{t\in[0,T]}$, which as a process is a measurable functional of the driving noise $(B_t)_{t\in[0,T]}$. This surprising and remarkable result was first obtained by A. K. Zvonkin \cite{Zvon74} in the one-dimensional case, whose proof, using PDE techniques, is based on a transformation ("Zvonkin-transformation"), that converts the SDE \eqref{SmallNoise} into a SDE without drift part. Subsequently, this result was generalized by A. Y. Veretennikov \cite{Ver79} to the multi-dimensional case. Much later, that is 35 years later, Zvonkin's and Veretennikov's results were extended by G. Da Prato, F. Flandoli, E. Priola and M. R\"{o}ckner \cite{DPFPR13} to the infinite-dimensional setting by using estimates of solutions of Kolmogorov's equation on Hilbert spaces. In fact, the latter authors study mild solutions $(X_t)_{t\in[0,T]}$ to the SDE
\begin{equation*}
	dX_{t}=AX_{t}dt+b(X_{t})dt+\sqrt{Q}dW_{t}, \quad t\in [0,T], \quad X_{0}=x\in \mathcal{H}\text{,}
\end{equation*}%
	where $(W_t)_{t\in[0,T]}$ is a cylindrical Brownian motion on $\mathcal{H}$, $A:D(A)\rightarrow \mathcal{H}$ a negative self-adjoint operator with compact resolvent, $Q:\mathcal{H}\rightarrow \mathcal{H}$ a non-negative definite self-adjoint bounded operator and $b:\mathcal{H}\rightarrow \mathcal{H}$. Here, the authors prove for $b\in L^{\infty }(\mathcal{H};\mathcal{H})$ under certain conditions on $A$ and $Q$ the existence of a unique mild solution, which is adapted to a completed filtration generated by $(W_t)_{t\in[0,T]}$. So restoration of well-posedness of the ODE \eqref{ODE} with a singular vector field is established via regularization by \emph{both} the cylindrical Brownian noise $(W_t)_{t\in[0,T]}$ and $A$, which cannot be chosen to be the zero operator.

Other works in this direction in the infinite-dimensional setting based on different methods are e.g. A. S. Sznitman \cite{Snitzman_TopicsInPropagationOfChaos}, A.\ Y. Pilipenko, M. V. Tantsyura \cite{PT} in connection with systems of McKean-Vlasov equations and G. Ritter, G. Leha \cite{LR} in the case of discontinuous drift vector fields of a rather specific form. We also refer to the references therein.

In this article, we aim at restoring well-posedness of singular ODE's by using a certain non-H\"{o}lder continuous additive noise of fractal nature. More specifically, we want to analyze solutions to the following type of SDE:

\begin{equation}\label{eq:MainSDE}
	X_{t}^{x}=x+\int_{0}^{t}b(t,X_{s}^{x})ds+\mathbb{B}_{t},\quad t \in[0,T],
\end{equation}
	where the $\mathcal{H}-$valued regularizing noise $(\mathbb{B}_{t})_{t\in[0,T]}$ is a stationary Gaussian process with locally non-H\"{o}lder continuous paths given by 
\begin{align*}
	\mathbb{B}_{t}=\sum_{k\geq 1}\lambda_k B_{t}^{H_k}e_k.
\end{align*}
	Here $\lbrace \lambda_k \rbrace_{k\geq 1}\subset \mathbb{R}$, $\lbrace e_k \rbrace_{k\geq 1}$ is an orthonormal basis of $\mathcal{H}$ and $\lbrace B_{\cdot }^{H_k} \rbrace_{k\geq 1}$ are independent one-dimensional fractional Brownian motions with Hurst parameters $H_k \in (0,\frac{1}{2})$, $k\geq 1$, such that 
\begin{equation*}
	H_k \searrow 0
\end{equation*}
	for $k \rightarrow \infty $.

Under certain (rather mild) growth conditions on the Fourier components $b_k$, $k\geq 1$, of the singular vector field $b:[0,T]\times \mathcal{H}\rightarrow \mathcal{H}$ (see \eqref{eq:conditionDrift} and \eqref{eq:KOperator}), which do not necessarily require that all $b_k$ are equal (compare e.g. to \cite{Snitzman_TopicsInPropagationOfChaos}), we show in this paper the existence of a unique (global) strong solution to the SDE \eqref{eq:MainSDE} driven by the non-Markovian process $(\mathbb{B}_{t})_{t\in[0,T]}$.

\bigskip Our approach for the construction of strong solutions to \eqref{eq:MainSDE} relies on Malliavin calculus (see e.g. D. Nualart \cite{Nualart_MalliavinCalculus}) and another variational technique, which involves the use of spatial regularity of local time of finite-dimensional approximations of $\mathbb{B}_{t}$. In contrast to the above mentioned works (and most of other related works in the literature), the method in this paper is not based on PDE, Markov or semimartingale techniques. Furthermore, our technique corresponds to a construction principle, which is diametrically opposed to the commonly used Yamada-Watanabe principle (see e.g.~\cite{YW71}): Using the Yamada-Watanabe principle, one combines the existence of a weak solution to a SDE with pathwise uniqueness to obtain strong uniqueness of solutions. So
\begin{equation*}
	\fbox{Weak existence}+\text{\fbox{Pathwise uniqueness}}\Rightarrow \text{\fbox{Strong uniqueness}.}
\end{equation*}
	This tool is in fact used by many authors in the literature. See e.g. the above mentioned authors or I. Gy\"{o}ngy, T. Martinez \cite{GyM01}, I. Gy\"{o}ngy, N. V. Krylov \cite{GyK96}, N. V. Krylov, M. R\"{o}ckner \cite{KR05} or S. Fang,\ T. S. Zhang \cite{FZ}, just to mention a few.

\bigskip However, using our approach, verification of the existence of a strong solution, which is unique in law, provides strong uniqueness:

\begin{equation*}
	\fbox{Strong existence}+\text{\fbox{Uniqueness in law}}\Rightarrow \text{\fbox{Strong uniqueness}.}
\end{equation*}
	See also H. J. Engelbert \cite{Engel} in the finite-dimensional Brownian case regarding the latter construction principle.

	In order to briefly explain our method in the case of time-homogeneous vector fields, we mention that we apply an infinite-dimensional generalization of a compactness criterion for square integrable Brownian functionals in $L^{2}(\Omega )$, which is originally due to G. Da Prato, P. Malliavin, and D. Nualart \cite{Nualart_MalliavinCalculus}, to a double-sequence of strong solutions $\lbrace(X_{t}^{d,\varepsilon })_{t\in[0,T]} \rbrace_{d\geq 1,\varepsilon >0}$ associated with the following SDE's
\begin{equation}\label{ApproxSDE}
	X_{t}^{d,\varepsilon }=x + \int_0^t b^{d,\varepsilon }(X_{s}^{d,\varepsilon })ds + \mathbb{B}_{t},\quad t\in [0,T].
\end{equation}
	Here $\lbrace b^{d,\varepsilon }\rbrace_{d\in \mathbb{N},\varepsilon >0}$ is an approximating double-sequence of vector fields of the singular drift $b$, which are smooth and live on $d-$dimensional subspaces of $\mathcal{H}$.

The application of the above mentioned compactness criterion (for each fixed $t$), however, requires certain (uniform) estimates with respect to the Malliavin derivative $D_{t}$ of $X_{t}^{d,\varepsilon }$ in the direction of a cylindrical Brownian motion. For this purpose, the Malliavin derivative $D_{\cdot }:\mathbb{D}^{1,2}(\mathcal{H})\longrightarrow L^{2}([0,T]\times \Omega )\otimes \mathcal{L}_{HS}(\mathcal{H},\mathcal{H})$ ($\mathbb{D}^{1,2}(\mathcal{H})$ is the space of $\mathcal{H}-$valued Malliavin differentiable random variables and $\mathcal{L}_{HS}(\mathcal{H},\mathcal{H})$ is the space of Hilbert-Schmidt operators from $\mathcal{H}$ to $\mathcal{H}$) in connection with a chain rule is applied to both sides of (\ref{ApproxSDE}) and one obtains the following linear equation:
\begin{equation}\label{DEq}
	D_{s}X_{t}^{d,\varepsilon }=\int_{s}^{t}\left( b^{d,\varepsilon }\right)^{\prime }(X_{u}^{d,\varepsilon })D_{s}X_{u}^{d,\varepsilon }du+\sum_{n\geq 1}\lambda _{n}K_{H_{n}}(t,s)\left\langle e_{n},\cdot \right\rangle _{\mathcal{H}}e_{n}, ~ s<t, 
\end{equation}
	where $\left( b^{d,\varepsilon }\right) ^{\prime }$ is the derivative of $b^{d,\varepsilon }$, $\left\langle \cdot ,\cdot \right\rangle _{\mathcal{H}}$ the inner product and $K_{H}$ a certain kernel function defined for Hurst parameters $H_n \in (0,\frac{1}{2})$.

	We remark here that this type of linearization based on a stochastic derivative $D_{t}$ actually corresponds to the Nash-Moser principle, which is used for the construction of solutions of (non-linear) PDE's by means of linearization of equations via classical derivatives. See e.g. J. Moser \cite{Moser}.

	In a next step we then can derive a representation of $D_{s}X_{t}^{d,\varepsilon }$ (under a Girsanov change of measure) in \eqref{DEq} which is not based on derivatives of $b^{d,\varepsilon }$ by using Picard iteration and the following variational argument:
\begin{align*}
	\int_{t<s_{1}<...<s_{n}<u}\kappa (s)D^{\alpha }f(\mathbb{B}_{s}^{d})ds &= \int_{\mathbb{R}^{dn}}D^{\alpha }f(z)L_{\kappa}^{n}(t,z)dz \notag \\
	&= (-1)^{\left\vert \alpha \right\vert }\int_{\mathbb{R}^{dn}}f(z)D^{\alpha }L_{\kappa }^{n}(t,z)dz,
\end{align*}
	where $\mathbb{B}_{s}^{d}:=(B_{s_{1}}^{H_{1}},...,B_{s_{1}}^{H_{d}},...,B_{s_{n}}^{H_{1}},...,B_{s_{n}}^{H_{d}})$ and $f:\mathbb{R}^{dn}\longrightarrow \mathbb{R}$ is a smooth function with compact support. Here $D^{\alpha }$ stands for a partial derivative of order $\left\vert \alpha \right\vert $ with respect a multi-index $\alpha $. Further, $L_{\kappa }^{n}(t,z)$ is a spatially differentiable local time of $\mathbb{B}_{\cdot }^{d}$ on a simplex scaled by non-negative integrable function $\kappa (s)=$ $\kappa _{1}(s)...\kappa _{n}(s)$.

	Then, using the latter we can verify the required estimates for the Malliavin derivative of the approximating solutions in connection with the above mentioned compactness criterion and we finally obtain (under some additional arguments) that for each fixed $t$
\begin{equation*}
	X_{t}^{d,\varepsilon }\longrightarrow X_{t}\text{ in }L^{2}(\Omega )
\end{equation*}
	for $\varepsilon \searrow 0,d\longrightarrow \infty $, where $(X_{t})_{t\in[0,T]}$ is the unique strong solution to \eqref{eq:MainSDE}.

\bigskip

	Finally, let us also mention a series of papers, from which our construction method gradually evolved: We refer to the works \cite{MenoukeuMeyerBrandisNilssenProskeZhang_VariationalApproachToTheConstructionOfStrongSolutions}, \cite{MeyerBrandisProske_OnTheExistenceOfStrongSolutionsOfLevyNoiseSDEs}, \cite{MeyerBrandisProske_ConstructionOfStrongSolutionsOfSDEs}, \cite{MohammedNilssenProske_Sobolev} in the case of finite-dimensional Brownian noise. See \cite{FNP.13} in the Hilbert space setting in connection with H\"{o}lder continuous drift vector fields. In the case of SDEs driven by L\'{e}vy processes we mention \cite{HaaPros.14}. Other results can be found in \cite{BNP}, \cite{ABP18} with respect to SDEs driven by fractional Brownian motion and related noise. See also \cite{BOPP.17} in the case of "skew fractional Brownian motion", \cite{BHP.17} with respect to singular delay equations and \cite{Bauer_StrongSolutionsOfMFSDEs} in the case of Brownian motion driven mean-field equations.

	We shall also point to the work of R. Catellier and M. Gubinelli \cite{CG}, who prove existence and \emph{path by path} uniqueness (in the sense of A. M. Davie \cite{Da07}) of strong solutions of fractional Brownian motion driven SDEs with respect to (distributional) drift vector fields belonging to the Besov-H\"{o}lder space $B_{\infty ,\infty }^{\alpha}$, $\alpha \in  \mathbb{R}$. The approach of the authors is based inter alia on the theorem of Arzela-Ascoli and a comparison principle based on an average translation operator. In the distributional case, that is $\alpha <0$, the drift part of the SDE is given by a generalized non-linear Young integral defined via the topology of $B_{\infty ,\infty }^{\alpha}$. See also D. Nualart, Y. Ouknine \cite{nualart2002regularization} in the one-dimensional case.

\bigskip

	The structure of our article is as follows: In Section 2 we introduce the mathematical framework of this paper. Further, in Section 3 we discuss some properties of the process $\mathbb{B}_{\cdot }$ and weak solutions of the SDE \eqref{eq:MainSDE}. Section 4 is devoted to the construction of unique strong solutions to the SDE \eqref{eq:MainSDE}. Finally, in Section 5 examples of singular vector fields for which strong solutions exist are given.

\subsection*{Notation}
	For the sake of readability we assume throughout the paper that $1 \leq T < \infty$ is a finite time horizon. We define $\Hcal$ to be an infinite-dimensional separable real-valued Hilbert space with scalar product $\langle \cdot, \cdot \rangle_{\Hcal}$ and orthonormal basis $\lbrace e_k \rbrace_{k\geq 1}$. Denote by $\Vert \cdot \Vert_{\Hcal}$ the induced norm on $\Hcal$ defined by $\Vert x \Vert_{\Hcal} := \langle x, x \rangle_{\Hcal}^{\frac{1}{2}}$, $x\in\Hcal$. For every $x\in \Hcal$ and $k\geq 1$ we denote by $x^{(k)} := \langle x, e_k \rangle_{\Hcal}$ the projection onto the subspace spanned by $e_k$, $k\geq 1$. Loosely speaking we are referring to the subspace spanned by $e_k$, $k\geq 1$, as the $k$-th dimension. In line with this notation we denote the projection of the SDE \eqref{eq:MainSDE} on the subspace spanned by $e_k$, $k\geq 1$, by $X^{(k)} := \langle X, e_k \rangle_{\Hcal}$. Moreover we can write the SDE \eqref{eq:MainSDE} as an infinite dimensional system of real-valued stochastic differential equations, namely
	\begin{align*}
		X_t^{(k)} = x^{(k)} + \int_0^t b_k(s, X_s) ds + \Bb_t^{(k)}, \quad t\in[0,T], \quad k\geq 1, 
	\end{align*}
	where $b_k$ and $\Bb^{(k)}$ are the projections on the subspace spanned by $e_k$, $k\geq 1$, of $b$ and $\Bb$, respectively. Note here that the function $b_k:[0,T] \times \Hcal \to \Rbb$ has still domain $[0,T]\times \Hcal$. Furthermore, we define the truncation operator $\pi_d$, $d \geq 1$, which maps an element $x\in \Hcal$ onto the first $d$ dimensions, by 
\begin{align}\label{eq:truncationOperator}
	\pi_d x := \sum_{k=1}^d x^{(k)} e_k.
\end{align}
	The truncated space $\pi_d \Hcal$ is denoted by $\Hcal_d$. We define the change of basis operator $\tau: \Hcal \to \ell^2$ by
	\begin{align}\label{eq:changeOfBasis}
		\tau x = \tau \sum_{k\geq 1} x^{(k)} e_k = \sum_{k\geq 1} x^{(k)} \etilde_k,
	\end{align}
	where $\lbrace \etilde_k \rbrace_{k\geq 1}$ is an orthonormal basis of $\ell^2$. It is easily seen that the operator $\tau$ is a bijection and we denote its inverse by $\tau^{-1}:\ell^2 \to \Hcal$.

\underline{Further frequently used notation:}
\begin{itemize}
	\item Let $(\Xcal, \Acal, \mu)$ denote a measurable space and $(\Ycal, \Vert \cdot \Vert_\Ycal)$ a normed space. Then $L^{2}(\Xcal;\Ycal)$ denotes the space of square integrable functions $X$ over $\Xcal$ taking values in $\Ycal$ and is endowed with the norm $$\Vert X \Vert_{L^{2}(\Xcal;\Ycal)}^2 = \int_\Xcal \Vert X(\omega) \Vert_\Ycal^2 \mu(d\omega).$$
	\item The space $L^2(\Omega, \Fcal)$ denotes the space of square integrable random variables on the sample space $\Omega$ measurable with respect to the $\sigma$-algebra $\Fcal$.
	\item We define $\Bb^x := x + \Bb$.
	\item For any vector $u$ we denote its transposed by $u^\top$.
	\item We denote by $\Id$ the identity operator.
	\item The Jacobian of a differentiable function is denoted by $\nabla$.
	\item For any multi-index $\alpha$ of length $d$ and any $d$-dimensional vector $u$ we define $u^\alpha := \prod_{i=1}^d u_i^{\alpha_i}$.
	\item For two mathematical expressions $E_1(\theta),E_2(\theta)$ depending on some parameter $\theta$ we write $E_1(\theta) \lesssim E_2(\theta)$, if there exists a constant $C>0$ not depending on $\theta$ such that $E_1(\theta) \leq C E_2(\theta)$.
	\item Let $A$ be some countable set. Then we denote by $\# A$ its cardinality.
\end{itemize}

\section{Preliminaries}\label{sec:prelim}
\subsection{Shuffles}\label{sec:shuffles}
	Let $m$ and $n$ be two integers. We denote by $\Scal(m,n)$ the set of \emph{shuffle permutations}, i.e. the set of permutations $\sigma: \{1, \dots, m+n\} \rightarrow \{1, \dots, m+n\}$ such that $\sigma(1) < \dots < \sigma(m)$ and $\sigma(m+1) < \dots < \sigma(m+n)$. Equivalently we denote for integers $k$ and $n$ by $\Scal(k; n)$ the set of shuffle permutations of $k$ sets of size $n$, i.e. the set of permutations $\sigma: \lbrace 1, \dots, k\cdot n \rbrace \to \lbrace 1, \dots, k\cdot n \rbrace$ such that $\sigma(m\cdot n + 1) < \dots < \sigma((m+1) \cdot n)$ for all $m = 0, \dots, k-1$.  Furthermore the $n$-dimensional simplex $\Delta^n$ of the interval $(s,t)$ is defined by
\begin{align*}
	\Delta_{s,t}^n := \{(u_1,\dots, u_n)\in [0,T]^n : \, s< u_1 < \cdots < u_n < t\}.
\end{align*}
	Note that the product of two simplices can be written as
\begin{align}\label{eq:ShuffleProduct}
	&\Delta_{s,t}^m \times \Delta_{s,t}^n = \bigcup_{\sigma \in \Scal(m,n)} \{(w_1, \dots, w_{m+n})\in [0,T]^{m+n} : \, w_{\sigma} \in \Delta_{s,t}^{m+n} \} \cup \Nfrak,
\end{align}
	where the set $\Nfrak$ has Lebesgue measure zero and $w_\sigma$ denotes the shuffled vector $(w_{\sigma(1)}, \dots, w_{\sigma(m+n)})$. For the sake of readability we denote throughout the paper the integral over the simplex $\Delta_{s,t}^n$ of the product of integrable functions $f_i:[0,T] \rightarrow \Rbb$, $i=1,\dots, n$, by
	\begin{align*}
		\int_{\Delta_{s,t}^n} \prod_{j=1}^n f_j(u_j) du := \int_s^t \int_{u_1}^t \cdots \int_{u_{n-1}}^t \prod_{j=1}^n f_j(u_j) du_n \cdots du_2 du_1.
	\end{align*}
	Due to \eqref{eq:ShuffleProduct}, we get for integrable functions $f_i:[0,T] \rightarrow \Rbb$, $i=1,\dots,m+n$, that
\begin{align}\label{shuffleIntegral}
	&\int_{\Delta_{s,t}^m} \prod_{j=1}^m f_j(u_j) du \int_{\Delta_{s,t}^n} \prod_{j=m+1}^{m+n} f_j(u_j) du = \sum_{\sigma\in \Scal(m,n)} \int_{\Delta_{s,t}^{m+n}} \prod_{j=1}^{m+n} f_{\sigma(j)} (w_j) dw. 
\end{align}
	For a proof of a more general result we refer the reader to \cite[Lemma 2.1]{BNP}.

\subsection{Fractional Calculus}
In the following we give some basic definitions and properties on fractional calculus. For more insights on the general theory we refer the reader to \cite{oldham1974fractional} and \cite{samko1993fractional}.

Let $a,b\in \Rbb$ with $a<b$, $f,g \in L^p([a,b])$ with $p\geq 1$ and $\alpha>0$. We define the \emph{left-} and \emph{right-sided Riemann-Liouville fractional integrals} by
$$I_{a^+}^\alpha f(x) = \frac{1}{\Gamma (\alpha)} \int_a^x (x-y)^{\alpha-1}f(y)dy,$$
and
$$I_{b^-}^\alpha g(x) = \frac{1}{\Gamma (\alpha)} \int_x^b (y-x)^{\alpha-1}g(y)dy,$$
for almost all $x\in [a,b]$. Here $\Gamma$ denotes the gamma function.

Furthermore, for any given integer $p\geq 1$, let $I_{a^+}^{\alpha} (L^p)$ and $I_{b^-}^{\alpha} (L^p)$ denote the images of $L^p([a,b])$ by the operator $I_{a^+}^\alpha$ and $I_{b^-}^\alpha$, respectively. If $0<\alpha<1$ as well as $f\in I_{a^+}^{\alpha} (L^p)$ and $g\in I_{b^-}^{\alpha} (L^p)$, we define the \emph{left-} and \emph{right-sided Riemann-Liouville fractional derivatives} by
\begin{align}\label{eq:leftSidedDerivative}
	D_{a^+}^{\alpha} f(x)= \frac{1}{\Gamma (1-\alpha)} \frac{d}{d x} \int_a^x \frac{f(y)}{(x-y)^{\alpha}}dy,
\end{align}
and
\begin{align}\label{eq:rightSidedDerivative}
	D_{b^-}^{\alpha} g(x)= \frac{1}{\Gamma (1-\alpha)} \frac{d}{d x} \int_x^b \frac{g(y)}{(y-x)^{\alpha}}dy,
\end{align}
respectively. The left- and right-sided derivatives of $f$ and $g$ defined in \eqref{eq:leftSidedDerivative} and \eqref{eq:rightSidedDerivative} admit moreover the representations
$$D_{a^+}^{\alpha} f(x)= \frac{1}{\Gamma (1-\alpha)} \left(\frac{f(x)}{(x-a)^\alpha}+\alpha\int_a^x \frac{f(x)-f(y)}{(x-y)^{\alpha+1}}dy\right),$$
and
$$D_{b^-}^{\alpha} g(x)= \frac{1}{\Gamma (1-\alpha)} \left(\frac{g(x)}{(b-x)^\alpha}+\alpha\int_x^b \frac{g(x)-g(y)}{(y-x)^{\alpha+1}}dy\right).$$
Last, we get by construction that similar to the fundamental theorem of calculus 
\begin{align}\label{eq:fundamentalThmFC1}
	I_{a^+}^\alpha (D_{a^+}^{\alpha} f) = f,
\end{align}
for all $f\in I_{a^+}^{\alpha} (L^p)$, and
\begin{align}\label{eq:fundamentalThmFC2}
	D_{a^+}^{\alpha}(I_{a^+}^\alpha  g) = g,
\end{align}
for all $g\in L^p([a,b])$. Equivalent results hold for $I_{b^-}^{\alpha}$ and $D_{b^-}^{\alpha}$.

\subsection{Fractional Brownian motion} 
	The one-dimensional \emph{fractional Brownian motion}, in short fBm, $B^H = \left(B_t^H\right)_{ t\in [0,T]}$ with Hurst parameter $H\in (0, \frac{1}{2})$ on a complete probability space $(\Omega,\Fcal,\Pbb)$ is defined as a centered Gaussian process with covariance function
$$R_H(t,s):=\EW{B_t^H B_s^H}=\frac{1}{2}\left(t^{2H} + s^{2H} - |t-s|^{2H} \right).$$
Note that $\EW{\left\vert B_t^H - B_s^H\right\vert^2}= |t-s|^{2H}$ and hence $B^H$ has stationary increments and almost surely H\"{o}lder continuous paths of order $H-\varepsilon$ for all $\varepsilon\in(0,H)$. However, the increments of $B^H$, $H\in (0,\frac{1}{2})$, are not independent and $B^H$ is not a semimartingale, see e.g. \cite[Proposition 5.1.1]{Nualart_MalliavinCalculus}.

Subsequently we give a brief outline of how a fractional Brownian motion can be constructed from a standard Brownian motion. For more details we refer the reader to \cite{Nualart_MalliavinCalculus}.


Recall the following result (see \cite[Proposition 5.1.3]{Nualart_MalliavinCalculus}) which gives the kernel of a fractional Brownian motion and an integral representation of $R_H(t,s)$ in the case of $H<\frac{1}{2}$.

\begin{proposition}\label{prop:kernel}
Let $H< \frac{1}{2}$. The kernel
\begin{small}
\begin{align}\label{eq:kernel}
	K_H(t,s) := c_H \left[\left( \frac{t}{s}\right)^{H- \frac{1}{2}} (t-s)^{H- \frac{1}{2}} + \left( \frac{1}{2}-H\right) s^{\frac{1}{2}-H} \int_s^t u^{H-\frac{3}{2}} (u-s)^{H-\frac{1}{2}} du\right],
\end{align}
\end{small}
where $c_H = \sqrt{\frac{2H}{(1-2H) \beta\left(1-2H , H+\frac{1}{2}\right)}}$ and $\beta$ is the beta function, satisfies
\begin{align}\label{eq:CovarianceFunctionR}
R_H(t,s) = \int_0^{t\wedge s} K_H(t,u)K_H(s,u)du.
\end{align}
\end{proposition}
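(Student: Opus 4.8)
The plan is to exploit the scaling self-similarity of the kernel together with a one-variable differentiation argument, reducing the whole statement to two beta-integral computations. By symmetry of both sides in $(t,s)$ I may assume $0<s\le t$, so that $t\wedge s=s$ and the right-hand side is $\Phi(t):=\int_0^s K_H(t,u)K_H(s,u)\,du$. The first step is a purely computational but crucial simplification: differentiating the explicit formula \eqref{eq:kernel} in $t$, the two terms carrying the factor $t^{H-3/2}(t-s)^{H-1/2}$ cancel exactly, leaving the clean identity
\[
	\partial_t K_H(t,s) = c_H\Big(H-\tfrac12\Big)\Big(\tfrac{t}{s}\Big)^{H-\frac12}(t-s)^{H-\frac32}.
\]
This removes the integral-remainder term from all subsequent manipulations and is what makes the argument tractable.

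Next I would treat $s$ as fixed and view both sides as functions of $t\ge s$, aiming to show $\Phi'(t)=\partial_t R_H(t,s)$ on $(s,\infty)$ together with a matching value at $t=s$. Differentiation under the integral sign is legitimate because for $t>s$ and $u\in(0,s)$ the factor $K_H(t,u)$ (and $\partial_t K_H(t,u)$) stays bounded, while $K_H(s,u)\sim(s-u)^{H-\frac12}$ near $u=s^-$, which is integrable since $H>0$; dominated convergence then applies. Using the derivative identity I get
\[
	\Phi'(t) = c_H\Big(H-\tfrac12\Big)\int_0^s \Big(\tfrac{t}{u}\Big)^{H-\frac12}(t-u)^{H-\frac32} K_H(s,u)\,du,
\]
and on the other side a direct computation gives $\partial_t R_H(t,s)=H\,t^{2H-1}-H\,(t-s)^{2H-1}$. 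Matching these is the analytic core: one inserts the explicit form of $K_H(s,u)$, rescales via $u=sv$, interchanges the order of integration in the integral-remainder term by Fubini, and recognizes the resulting one-dimensional integrals as Euler beta integrals $\int_0^1 x^{a-1}(1-x)^{b-1}\,dx=\beta(a,b)$.

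For the initial condition at $t=s$ I would use the homogeneity $K_H(\lambda t,\lambda s)=\lambda^{H-\frac12}K_H(t,s)$, which follows from \eqref{eq:kernel} by the substitution $u\mapsto\lambda u$ in the integral-remainder term. This reduces $\Phi(s)=\int_0^s K_H(s,u)^2\,du$ to $s^{2H}\int_0^1 K_H(1,v)^2\,dv$, so the required identity $\Phi(s)=R_H(s,s)=s^{2H}$ is equivalent to $\int_0^1 K_H(1,v)^2\,dv=1$; this normalization integral is exactly what fixes the constant $c_H=\sqrt{2H/\big((1-2H)\,\beta(1-2H,H+\tfrac12)\big)}$. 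Once both pieces are in hand, integrating $\Phi'(r)=\partial_r R_H(r,s)$ from $s$ to $t$ and adding $\Phi(s)=R_H(s,s)$ yields $\Phi(t)=R_H(t,s)$, completing the proof.

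I expect the \emph{main obstacle} to be the explicit beta-integral evaluations, both in matching $\Phi'$ with $\partial_t R_H$ and in verifying $\int_0^1 K_H(1,v)^2\,dv=1$: the integral-remainder term in $K_H$ turns these into iterated integrals whose reduction requires a careful Fubini step and repeated use of the beta-function identity and the reflection relation for $\beta$, and it is precisely here that the value of $c_H$ must appear. An alternative, more conceptual route would bypass most of this computation by expressing $K_H(t,\cdot)$ as the image of $\mathbf{1}_{[0,t]}$ under a Riemann--Liouville fractional operator of order $\tfrac12-H$ (cf.\ \eqref{eq:leftSidedDerivative}--\eqref{eq:fundamentalThmFC2}) and identifying \eqref{eq:CovarianceFunctionR} as the $L^2$-isometry property of that operator; this is elegant but shifts the difficulty into setting up the operator and tracking the normalizing constants correctly, so for a self-contained argument I would prefer the direct differentiation-and-scaling approach above.
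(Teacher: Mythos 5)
First, note that the paper does not prove this proposition at all: it is recalled verbatim from \cite[Proposition 5.1.3]{Nualart_MalliavinCalculus} (which in turn rests on the fractional-calculus factorization of Decreusefond--\"Ust\"unel), so there is no in-paper argument to compare yours against. Judged on its own terms, your skeleton is sound and contains the genuinely useful observations: the cancellation giving $\partial_t K_H(t,s)=c_H(H-\tfrac12)(t/s)^{H-1/2}(t-s)^{H-3/2}$ is correct, the homogeneity $K_H(\lambda t,\lambda s)=\lambda^{H-1/2}K_H(t,s)$ is correct, and reducing the claim to matching $\partial_t$ on $t>s$ plus the diagonal value $\Phi(s)=s^{2H}$ is a legitimate strategy. (A small slip: $K_H(t,u)$ is \emph{not} bounded for $u\in(0,s)$ --- it blows up like $u^{H-1/2}$ as $u\to0^+$ --- although $\partial_t K_H(t,u)$ is bounded there and the dominated-convergence justification survives.)

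The genuine gap is in what you call the analytic core. After inserting the explicit kernel, the integrals you must evaluate are not Euler beta integrals. For instance, matching $\Phi'(t)$ with $Ht^{2H-1}-H(t-s)^{2H-1}$ requires
\begin{equation*}
\int_0^{v} u^{1-2H}(t-u)^{H-\frac32}(v-u)^{H-\frac12}\,du ,
\end{equation*}
a three-point integral which, after rescaling, is the Euler integral representation of a Gauss hypergeometric function ${}_2F_1(\,\cdot\,;v/t)$, not a beta function; the same happens for the cross term in $\int_0^1 K_H(1,v)^2\,dv$. A beta function only emerges when two of the three base points coincide, so the single Fubini-plus-rescaling step you describe does not close the computation: you would need nontrivial hypergeometric identities, or else the restructuring via $K_H=I_{0^+}^{2H}\,s^{1/2-H}\,I_{0^+}^{1/2-H}\,s^{H-1/2}$ and the semigroup property of Riemann--Liouville integrals (where the beta functions genuinely live, acting on power functions) --- i.e.\ precisely the "alternative, conceptual route" you set aside. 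As written, both the derivative-matching identity and the normalization $\int_0^1K_H(1,v)^2\,dv=1$ (which is where the specific value of $c_H$ must be verified, not merely invoked) are asserted rather than established, so the proof is not complete.
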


Subsequently, we denote by $W$ a standard Brownian motion on the complete filtered probability space $(\Omega, \Fcal, \Fbb^W, \Pbb)$, where $\Fbb^W := ( \Fcal_t^W)_{t\in [0,T]}$ is the natural filtration of $W$ augmented by all $\Pbb$-null sets. Using the kernel given in \eqref{eq:kernel} it is well known that the fractional Brownian motion $B^H$ has a representation
\begin{align}\label{eq:fBmAsWienerIntegral}
	B_t^H = \int_0^t K_H(t,s) dW_s, ~H\in\left(0,\frac{1}{2}\right).
\end{align}
Note that due to representation \eqref{eq:fBmAsWienerIntegral} the natural filtration generated by $B^H$ is identical to $\Fbb^W$. Furthermore, equivalent to the case of a standard Brownian motion, it exists a version of Girsanov's theorem for fractional Brownian motion which is due to \cite[Theorem 4.9]{decreusefond1999stochastic}. In the following we state the version given in \cite[Theorem 3.1]{nualart2002regularization}. \par 
But first let us define the isomorphism $K_H$ from $L^2([0,T])$ onto $I_{0+}^{H+\frac{1}{2}}(L^2)$ (see \cite[Theorem 2.1]{decreusefond1999stochastic}) given by
\begin{align}\label{eq:kernelIsomorphism}
	(K_H \varphi)(s) = I_{0^+}^{2H} s^{\frac{1}{2}-H} I_{0^+}^{\frac{1}{2}-H}s^{H-\frac{1}{2}}  \varphi, \quad \varphi \in L^2([0,T]).
\end{align}
From \eqref{eq:kernelIsomorphism} and the properties of the Riemann-Liouville fractional integrals and derivatives \eqref{eq:fundamentalThmFC1} and \eqref{eq:fundamentalThmFC2}, the inverse of $K_H$ is given by
$$(K_H^{-1} \varphi)(s) = s^{\frac{1}{2}-H} D_{0^+}^{\frac{1}{2}-H} s^{H-\frac{1}{2}} D_{0^+}^{2H} \varphi(s), \quad \varphi \in I_{0+}^{H+\frac{1}{2}}(L^2).$$ It can be shown (see \cite{nualart2002regularization}) that if $\varphi$ is absolutely continuous
\begin{align}\label{eq:inverseKH}
(K_H^{-1} \varphi)(s) = s^{H-\frac{1}{2}} I_{0^+}^{\frac{1}{2}-H} s^{\frac{1}{2}-H}\varphi'(s),
\end{align}
where $\varphi'$ denotes the weak derivative of $\varphi$.

\begin{theorem}[Girsanov's theorem for fBm]\label{thm:Girsanov}
Let $u=\left(u_t \right)_{ t\in [0,T]}$ be a process with integrable trajectories and set $\widetilde{B}_t^H = B_t^H + \int_0^t u_s ds, ~ t\in [0,T].$
Assume that
\begin{itemize}
\item[(i)] $\int_0^{\cdot} u_s ds \in I_{0+}^{H+\frac{1}{2}} (L^2 ([0,T])$, $\Pbb$-a.s., and

\item[(ii)] $\EW{\Ecal_T}=1$, where
$$\Ecal_T := \exp\left\{-\int_0^T K_H^{-1}\left( \int_0^{\cdot} u_r dr\right)(s)dW_s - \frac{1}{2} \int_0^T K_H^{-1} \left( \int_0^{\cdot} u_r dr \right)^2(s)ds \right\}.$$
\end{itemize}
Then the shifted process $\widetilde{B}^H$ is an $\Fbb^W$-- fractional Brownian motion with Hurst parameter $H$ under the new probability measure $\widetilde{\Pbb}$ defined by $\frac{d\widetilde{\Pbb}}{d\Pbb}=\Ecal_T$.
\end{theorem}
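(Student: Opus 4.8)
The plan is to reduce the assertion to the classical Cameron–Martin–Girsanov theorem for the underlying Brownian motion $W$ by exploiting the kernel representation \eqref{eq:fBmAsWienerIntegral}. Write $\phi_t := \int_0^t u_s\, ds$ for the (random) shift. The first step is to rewrite the shifted process $\widetilde{B}^H$ so that it again has the structure of a Wiener integral of the kernel $K_H$ against a (translated) Brownian motion. By assumption (i), $\phi \in I_{0+}^{H+\frac{1}{2}}(L^2([0,T]))$ $\Pbb$-a.s., which is exactly the image of the isomorphism $K_H$ from \eqref{eq:kernelIsomorphism}; hence $\psi := K_H^{-1}\phi$ is a well-defined element of $L^2([0,T])$ $\Pbb$-a.s., with the explicit form given by \eqref{eq:inverseKH} since $\phi$ is absolutely continuous with weak derivative $u$. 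Interpreting $K_H$ as the integral operator $(K_H\psi)(t) = \int_0^t K_H(t,s)\psi(s)\,ds$, the identity $K_H\psi = \phi$ yields $\int_0^t u_s\, ds = \int_0^t K_H(t,s)\psi(s)\, ds$, and combining this with \eqref{eq:fBmAsWienerIntegral} I obtain
\[
\widetilde{B}_t^H = \int_0^t K_H(t,s)\, dW_s + \int_0^t K_H(t,s)\psi(s)\, ds = \int_0^t K_H(t,s)\, d\widetilde{W}_s,
\]
where $\widetilde{W}_t := W_t + \int_0^t \psi(s)\, ds$ and $\psi(s) = K_H^{-1}\!\left(\int_0^\cdot u_r\, dr\right)(s)$.

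Next I would identify the given density as the Dol\'eans exponential associated with the drift $-\psi$: by construction $\Ecal_T = \exp\{-\int_0^T \psi(s)\, dW_s - \frac{1}{2}\int_0^T \psi(s)^2\, ds\}$ with $\psi = K_H^{-1}\phi$, and $\psi$ inherits adaptedness from $u$ so that both the stochastic integral and the exponential martingale are meaningful. Assumption (ii), $\EW{\Ecal_T}=1$, guarantees that $\widetilde{\Pbb}$ defined by $\frac{d\widetilde{\Pbb}}{d\Pbb}=\Ecal_T$ is a probability measure, whereupon the classical Girsanov theorem asserts that $\widetilde{W}$ is a standard $\Fbb^W$-Brownian motion under $\widetilde{\Pbb}$.

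Finally I would read off the fBm property directly from the representation established in the first step. Since $\widetilde{W}$ is a standard Brownian motion under $\widetilde{\Pbb}$ and $\widetilde{B}_t^H = \int_0^t K_H(t,s)\, d\widetilde{W}_s$ is precisely a Wiener integral of the kernel $K_H$ against $\widetilde{W}$, the process $\widetilde{B}^H$ is, under $\widetilde{\Pbb}$, a centered Gaussian process whose covariance is $\int_0^{t\wedge s} K_H(t,u)K_H(s,u)\, du = R_H(t,s)$ by Proposition \ref{prop:kernel}, i.e.\ a fractional Brownian motion with Hurst parameter $H$. Adaptedness to $\Fbb^W$ is immediate because $\widetilde{B}_t^H$ is $\Fcal_t^W$-measurable.

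The step I expect to be the main obstacle is the rigorous justification of the first paragraph: verifying that $K_H^{-1}\phi$ is a genuine element of $L^2([0,T])$ $\Pbb$-a.s.\ (so that the stochastic integral in $\Ecal_T$ is well-defined and the classical Girsanov theorem is applicable), and justifying the interchange of the isomorphism $K_H$ with the stochastic integration against $W$. This is exactly where assumptions (i) and (ii) are used, and where the mapping properties of the fractional integral and derivative operators recorded in \eqref{eq:kernelIsomorphism} and \eqref{eq:inverseKH} must be invoked carefully.
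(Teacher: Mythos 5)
The paper does not prove this theorem: it is quoted verbatim from \cite[Theorem 3.1]{nualart2002regularization} (going back to \cite[Theorem 4.9]{decreusefond1999stochastic}), so there is no in-paper argument to compare against. Your proposal reconstructs the standard proof from those references correctly: transfer the drift through the operator $K_H$, apply the classical Girsanov theorem to the underlying Brownian motion $W$ with shift $\psi=K_H^{-1}\phi$, and read off the fBm law from the Wiener-integral representation against the new Brownian motion $\widetilde{W}$.

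Two points you flag as "the main obstacle" are indeed where the cited references do the work, and it is worth being explicit about them. First, the identification of the operator $K_H$ of \eqref{eq:kernelIsomorphism} (defined via compositions of fractional integrals) with the integral operator $\varphi\mapsto\int_0^{\cdot}K_H(\cdot,s)\varphi(s)\,ds$ built from the kernel \eqref{eq:kernel} is not a tautology; it is the content of \cite[Theorem 2.1]{decreusefond1999stochastic}, and without it the chain $\phi=K_H\psi \Rightarrow \int_0^t u_s\,ds=\int_0^t K_H(t,s)\psi(s)\,ds$ does not follow from assumption (i) alone. Second, your remark that $\psi$ "inherits adaptedness from $u$" is correct but should be justified from \eqref{eq:inverseKH}: $(K_H^{-1}\phi)(s)$ is a weighted fractional integral $I_{0^+}^{\frac{1}{2}-H}$ of $u$ evaluated at $s$, hence depends only on $u_r$ for $r\le s$; this is what makes $\int_0^T\psi(s)\,dW_s$ an It\^{o} integral and lets assumption (ii) feed into the classical Novikov/Girsanov machinery. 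With these two points supplied, your argument is complete and coincides with the proof in the literature that the paper relies on.
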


\begin{remark}\label{rem:GirsanovInfty}
	\Cref{thm:Girsanov} can be extended to the multi- and infinite-dimensional cases, which will be considered in this paper primarily. Indeed, note first that the measure change in Girsanov's theorem acts dimension-wise. In particular, consider the two dimensional shifted process
	\begin{align*}
		X_t^{(1)} &= B_t^{H_1} + \int_0^t u_s^{(1)} ds, \\
		X_t^{(2)} &= B_t^{H_2} + \int_0^t u_s^{(2)} ds, ~t\in[0,T],
	\end{align*}
	where $B^{H_1}$ and $B^{H_2}$ are two fractional Brownian motions with Hurst parameters $H_1$ and $H_2$ generated by the independent standard Brownian motions $W^{(1)}$ and $W^{(2)}$, respectively, and $u^{(1)}$ and $u^{(2)}$ are two shifts fulfilling the conditions of \Cref{thm:Girsanov}. Then the measure change with respect to the stochastic exponential
	\begin{align*}
		\Ecal_T^{(1)} := \exp\left\{-\int_0^T K_{H_1}^{-1}\left( \int_0^{\cdot} u_r^{(1)} dr\right)(s)dW_s^{(1)} - \frac{1}{2} \int_0^T K_{H_1}^{-1} \left( \int_0^{\cdot} u_r^{(1)} dr \right)^2(s)ds \right\}
	\end{align*}
	yields the two dimensional process
	\begin{align*}
		X_t^{(1)} &= \Btilde_t^{H_1}, \\
		X_t^{(2)} &= B_t^{H_2} + \int_0^t u_s^{(2)} ds, ~t\in[0,T].
	\end{align*}
	Here, $\Btilde^{H_1}$ is a fractional Brownian motions with respect to the measure $\widetilde{\Pbb}$ defined by $\frac{d\widetilde{\Pbb}}{d\Pbb} = \Ecal_T^{(1)}$. Note that $B^{H_2}$ is still a fractional Brownian motion under $\widetilde{\Pbb}$, since $W^{(1)}$ and $W^{(2)}$ are independent. Applying Girsanov's theorem again with respect to the stochastic exponential 
	\begin{align*}
		\Ecal_T^{(2)} := \exp\left\{-\int_0^T K_{H_2}^{-1}\left( \int_0^{\cdot} u_r^{(2)} dr\right)(s)dW_s^{(2)} - \frac{1}{2} \int_0^T K_{H_2}^{-1} \left( \int_0^{\cdot} u_r^{(2)} dr \right)^2(s)ds \right\},
	\end{align*}
	yields the two dimensional process
	\begin{align*}
		X_t^{(1)} &= \Btilde_t^{H_1}, \\
		X_t^{(2)} &= \Btilde_t^{H_2}, ~t\in[0,T],
	\end{align*}
	where $\Btilde^{H_1}$ and $\Btilde^{H_2}$ are independent fractional Brownian motions with respect to the measure $\hat{\Pbb}$ defined by
	\begin{align*}
		\frac{d\hat{\Pbb}}{d\Pbb} = \frac{d\hat{\Pbb}}{d\widetilde{\Pbb}} \frac{d\widetilde{\Pbb}}{d\Pbb} = \Ecal_T^{(2)}\Ecal_T^{(1)}.
	\end{align*}
	Repeating iteratively yields the stochastic exponential -- if well-defined --
	\begin{align*}
		\Ecal_T := \prod_{k\geq 1} \Ecal_T^{(k)}
	\end{align*}
	acting on infinite dimensions.
\end{remark}

Finally, we give the property of strong local non-determinism of the fractional Brownian motion $B^H$ with Hurst parameter $H\in(0,\frac{1}{2})$ which was proven in \cite[Lemma 7.1]{pitt1978local}. This property will essentially help us to overcome the limitations of not having independent increments of the underlying noise.

\begin{lemma}\label{lem:localNonDeterminism}
	Let $B^H$ be a fractional Brownian motion with Hurst parameter $H \in (0,\frac{1}{2})$. Then there exists a constant $\Kfrak_H$ dependent merely on $H$ such that for every $t\in [0,T]$ and $0 < r \leq t$
	\begin{align*}
		\Var{B_{t}^{H}\left\vert B_{s}^{H}: \left\vert t-s\right\vert \geq r \right. } \geq \Kfrak_{H} r^{2H}.
	\end{align*}
\end{lemma}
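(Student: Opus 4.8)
The plan is to use that $B^{H}$ is a Gaussian process, so that the conditional variance $\Var{B_t^H \mid B_s^H : |t-s|\geq r}$ is in fact deterministic and coincides with the squared $L^2(\Omega)$-distance from $B_t^H$ to the closed linear subspace $\Mcal_r := \overline{\mathrm{span}}\{B_s^H : |t-s|\geq r\}$ (for jointly Gaussian families, conditional expectation given a sub-collection equals orthogonal projection onto its closed linear span). It therefore suffices to establish a lower bound $\mathrm{dist}_{L^2(\Omega)}(B_t^H,\Mcal_r)^2 \geq \Kfrak_H r^{2H}$ uniform in $t$ and in $0<r\leq t$. The first step is to move this Hilbert-space problem to the frequency line via the harmonizable representation of fractional Brownian motion,
\[
  B_u^H = \gamma_H \int_{\Rbb} \frac{e^{iu\xi}-1}{|\xi|^{H+1/2}}\, \widetilde W(d\xi),
\]
with $\widetilde W$ a Hermitian complex Gaussian measure of control measure $d\xi$ and $\gamma_H>0$ a normalising constant. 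Under the associated isometry, $B_u^H$ corresponds to $g_u(\xi):=\gamma_H(e^{iu\xi}-1)|\xi|^{-(H+1/2)}\in L^2(\Rbb,d\xi)$, and real linear combinations of the $B_{s_j}^H$ map to the same combinations of the $g_{s_j}$; hence the distance in $L^2(\Omega)$ equals the distance in $L^2(\Rbb,d\xi)$ from $g_t$ to $\overline{\mathrm{span}}\{g_s:|s-t|\geq r\}$.

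Next I would bound this distance below by a single well-chosen orthogonal test vector, using the elementary duality $\mathrm{dist}(g_t,\Mcal_r)\geq |\langle g_t,v\rangle|/\|v\|$ valid for any $v$ orthogonal to every $g_s$ with $|s-t|\geq r$. Writing $v(\xi)=w(\xi)|\xi|^{H+1/2}$, a short computation gives $\langle g_s,v\rangle = \gamma_H\,\overline{(\widehat w(s)-\widehat w(0))}$, so the orthogonality relations hold as soon as $\widehat w$ is supported in the interval $(t-r,t+r)$. This is exactly where the hypothesis $r\leq t$ is used: it forces $0\notin(t-r,t+r)$, so the constant contribution $\widehat w(0)$ coming from the ``$-1$'' in the spectral density vanishes automatically, and the support condition alone suffices. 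Choosing $\widehat w(s)=\phi\!\left(\tfrac{s-t}{r}\right)$ for a fixed bump $\phi\in C_c^\infty(-1,1)$ with $\phi(0)=1$ yields $|\langle g_t,v\rangle| = \gamma_H|\widehat w(t)| = \gamma_H$.

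The extraction of the exact power is then a scaling computation. Inverting the Fourier transform gives $w(\xi)=r\,e^{it\xi}\check\phi(r\xi)$ with $\check\phi$ Schwartz, whence, substituting $\zeta=r\xi$,
\[
  \|v\|^2 = \int_{\Rbb} r^2|\check\phi(r\xi)|^2|\xi|^{2H+1}\,d\xi = r^{-2H}\int_{\Rbb}|\check\phi(\zeta)|^2|\zeta|^{2H+1}\,d\zeta =: r^{-2H}\,C_\phi,
\]
where $C_\phi<\infty$ because $|\zeta|^{2H+1}$ is locally integrable at $0$ and $\check\phi$ decays faster than any polynomial at infinity. Combining the duality bound with the two preceding displays gives $\mathrm{dist}(g_t,\Mcal_r)^2 \geq \gamma_H^2 C_\phi^{-1}\,r^{2H}$, so the lemma holds with $\Kfrak_H := \gamma_H^2/C_\phi$, a constant depending only on $H$ (through $\gamma_H$ and the weight $|\zeta|^{2H+1}$) and, crucially, independent of both $t$ and $r$.

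I expect the main obstacle to lie not in the duality or the scaling, which are routine once set up, but in the rigorous justification of the frequency-side reduction: verifying that the harmonizable representation induces the claimed isometry onto $L^2(\Rbb,d\xi)$, controlling the singularity of $|\xi|^{-(2H+1)}$ at the origin (which is tamed precisely by the vanishing of $e^{iu\xi}-1$ at $\xi=0$), and confirming that the constant term $\widehat w(0)$ is legitimately annihilated by the support condition under the standing hypothesis $r\leq t$. Once these analytic points are secured, the lower bound $\Kfrak_H r^{2H}$ follows immediately.
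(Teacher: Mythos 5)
Your argument is correct. Note first that the paper does not prove this lemma at all: it is quoted verbatim from Pitt (the reference \cite{pitt1978local}, Lemma 7.1), so there is no in-paper proof to compare against. What you have written is essentially the classical spectral-domain proof of strong local non-determinism for fBm, which is also the route taken in the cited source: reduce the conditional variance to the squared $L^2(\Rbb,d\xi)$-distance from $g_t(\xi)=\gamma_H(e^{it\xi}-1)|\xi|^{-(H+1/2)}$ to the closed span of $\{g_s:|s-t|\geq r\}$ via the harmonizable representation, lower-bound that distance by duality against a single test vector $v=w\cdot|\cdot|^{H+1/2}$ whose Fourier data is a bump supported in $(t-r,t+r)$, and extract the exponent $r^{2H}$ by scaling. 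The two points you flag as needing care are indeed the only ones: (i) for an infinite jointly Gaussian family the conditional variance is deterministic and equals the squared distance to the closed linear span (and this distance is intrinsic, so it may be computed in the ambient $L^2(\Rbb,d\xi)$); and (ii) the hypothesis $r\leq t$ is exactly what places $0$ outside $(t-r,t+r)$ and kills the $\widehat w(0)$ term, so the orthogonality relations reduce to a pure support condition. Your constant $\Kfrak_H=\gamma_H^2/C_\phi$ depends only on $H$, uniformly in $t$ and $r$, as required. The only cosmetic remark: conditioning on the larger two-sided index set $\{s\in\Rbb:|s-t|\geq r\}$ only decreases the variance, so your bound a fortiori covers the index set $s\in[0,T]$ intended in the statement.
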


\section{Cylindrical fractional Brownian motion and weak solutions}\label{sec:weakSol}
	We start this section by defining the driving noise $(\Bb_t)_{t\in[0,T]}$ in SDE \eqref{eq:MainSDE}. Let $\lbrace W^{(k)} \rbrace_{k\geq 1}$ be a sequence of independent one-dimensional standard Brownian motions on a joint complete probability space $(\Omega, \Fcal, \Pbb)$. We define the cylindrical Brownian motion $W$ taking values in $\Hcal$ by
	\begin{align*}
		W_t := \sum_{k\geq 1} W_t^{(k)} e_k, \quad t\in[0,T],
	\end{align*}
	and denote by $\Fbb^W := \left( \Fcal_t^W \right)_{t\in[0,T]}$ its natural filtration augmented by the $\Pbb$-null sets. Moreover, we define a sequence of Hurst parameters $H:= \lbrace H_k \rbrace_{k\geq 1} \subset \left( 0, \frac{1}{2} \right)$ with the following properties:
	\begin{enumerate}[(i)]
		\item $\sum_{k\geq 1} H_k < \frac{1}{6}$
		\item $\sup_{k\geq 1} H_k < \frac{1}{12}$ 
	\end{enumerate}
	\vspace{0.2cm}
	Using $H$ we construct the sequence of fractional Brownian motions $\lbrace B^{H_k} \rbrace_{k\geq 1}$ associated to $\lbrace W^{(k)} \rbrace_{k\geq 1}$ by
	\begin{align*}
		B_t^{H_k} := \int_0^t K_{H_k}(t,s)dW_s^{(k)}, \quad t\in[0,T], \quad k\geq 1,
	\end{align*}
	where the kernel $K_{H_k}(\cdot,\cdot)$ is defined as in \eqref{eq:kernel}. Note that the fractional Brownian motions $\lbrace B^{H_k} \rbrace_{k\geq 1}$ are independent by construction. Consequently, we define the cylindrical fractional Brownian motion $B^H$ with associated sequence of Hurst parameters $H$ by
	\begin{align}\label{eq:cylindricalFBm}
		B_t^H:= \sum_{k\geq 1} B_t^{H_k} e_k, \quad t\in[0,T].
	\end{align}
	Nevertheless, the cylindrical fractional Brownian motion $B^H$ is not in the space $L^2(\Omega; \Hcal)$. That is why we consider the operator $Q: \Hcal \to \Hcal$ defined by
	\begin{align*}
		Qx = \sum_{k\geq 1} \lambda_k^2 x^{(k)} e_k,
	\end{align*}
	for a given sequence of non-negative real numbers $\lambda := \lbrace \lambda_k \rbrace_{k\geq 1} \in \ell^2$ such that $\frac{\lambda}{\sqrt{H}} := \left\lbrace \frac{\lambda_k}{\sqrt{H_k}} \right\rbrace_{k\geq 1} \in \ell^1$. In particular, $Q$ is a self-adjoint operator and we have that the \emph{weighted cylindrical fractional Brownian motion}
	\begin{align}\label{eq:Bb}
		\Bb_t := \sqrt{Q}B_t^H = \sum_{k\geq 1} \lambda_k B_t^{H_k} e_k,
	\end{align}
	lies in $L^2(\Omega; \Hcal)$ for every $t\in[0,T]$. Due to the following lemma the stochastic process $(\Bb_t)_{t\in [0,T]}$ is continuous in time.

\begin{lemma}
		The stochastic process $(\Bb_t)_{t\in[0,T]}$ defined in \eqref{eq:Bb} has almost surely continuous sample paths on $[0,T]$.
\end{lemma}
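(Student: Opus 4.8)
The plan is to establish continuity of $\Bb$ by showing that each summand is continuous and that the series converges uniformly in $t$ almost surely, so that the limit inherits continuity. Since $\Bb_t = \sum_{k\geq 1} \lambda_k B_t^{H_k} e_k$ is an $\Hcal$-valued series of orthogonal terms, the natural route is to control the partial sums $\Bb_t^{(N)} := \sum_{k=1}^N \lambda_k B_t^{H_k} e_k$ and prove that they form a Cauchy sequence in the Banach space $C([0,T];\Hcal)$ of continuous $\Hcal$-valued paths, equipped with the sup-norm $\sup_{t\in[0,T]} \Vert \cdot \Vert_{\Hcal}$, on a set of full probability.

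First I would recall that each one-dimensional fractional Brownian motion $B^{H_k}$ has almost surely continuous (indeed $(H_k-\varepsilon)$-Hölder) sample paths, as noted in the excerpt, so every partial sum $\Bb^{(N)}$ lies in $C([0,T];\Hcal)$ almost surely. The key step is then a quantitative tail estimate on the remainder. Because the $\lbrace B^{H_k} \rbrace_{k\geq 1}$ are independent and the $\lbrace e_k \rbrace$ orthonormal, for $M<N$ one has
\begin{align*}
	\Ebb\negthinspace\left[ \sup_{t\in[0,T]} \Vert \Bb_t^{(N)} - \Bb_t^{(M)} \Vert_{\Hcal}^2 \right] = \Ebb\negthinspace\left[ \sup_{t\in[0,T]} \sum_{k=M+1}^N \lambda_k^2 |B_t^{H_k}|^2 \right] \leq \sum_{k=M+1}^N \lambda_k^2 \,\Ebb\negthinspace\left[ \sup_{t\in[0,T]} |B_t^{H_k}|^2 \right].
\end{align*}
The plan is to bound $\Ebb[\sup_{t\in[0,T]} |B_t^{H_k}|^2]$ by a constant times $T^{2H_k}$ (up to a Hurst-dependent multiplicative factor), using a maximal inequality for fractional Brownian motion such as the Garsia-Rodemich-Rumsey inequality together with the variance identity $\Ebb[|B_t^{H_k}-B_s^{H_k}|^2] = |t-s|^{2H_k}$. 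Since $T\geq 1$, the factor $T^{2H_k}$ is bounded uniformly in $k$, so the remainder is controlled by $\sum_{k>M} \lambda_k^2$, which is the tail of a convergent series because $\lambda \in \ell^2$.

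From this $L^2$-bound on the sup-norm remainder, I would extract an almost-sure uniformly convergent subsequence: choosing indices $M_j$ along which the tail $\sum_{k>M_j}\lambda_k^2$ decays fast enough (e.g.\ summably after a Chebyshev/Borel-Cantelli argument), the partial sums converge almost surely in $C([0,T];\Hcal)$. The uniform limit of continuous $\Hcal$-valued functions is continuous, and this limit must coincide with $\Bb$ almost surely (the pointwise-in-$t$ limits already agree by the $L^2(\Omega;\Hcal)$ convergence established when $\Bb$ was defined).

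\textbf{The main obstacle} will be making the maximal inequality for $B^{H_k}$ explicit with a constant whose dependence on $H_k$ does not blow up as $H_k \searrow 0$; one must verify that the Garsia-Rodemich-Rumsey (or Kolmogorov-Chentsov) constant, combined with $T^{2H_k}\leq T^{2\sup_k H_k}$, stays bounded uniformly in $k$ so that the summability genuinely reduces to $\lambda\in\ell^2$. A clean alternative that sidesteps delicate constants is to invoke the integral representation $B_t^{H_k}=\int_0^t K_{H_k}(t,s)\,dW_s^{(k)}$ together with the condition $\frac{\lambda}{\sqrt{H}}\in\ell^1$ imposed on the weights: this stronger hypothesis is presumably there precisely to absorb any $H_k$-dependent growth in the kernel estimates, and I would expect the intended proof to exploit it rather than $\ell^2$-summability alone.
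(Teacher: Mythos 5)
Your overall strategy (control the uniform norm of the tails of the series, then pass to the limit in $C([0,T];\Hcal)$) is the same as the paper's, but your primary route as written would fail at exactly the point you flag as the ``main obstacle'': the maximal-inequality constant for $B^{H}$ genuinely blows up as $H\searrow 0$. The paper cites a sharp bound (Borovkov et al.) of the form $\Ebb\bigl[\sup_{t\in[0,T]}|B_t^{H}|\bigr]\leq C/\sqrt{H}$ with $C$ independent of $H$, and this $1/\sqrt{H}$ growth is not an artifact of the method --- so there is no hope of reducing the summability to $\lambda\in\ell^2$ alone, since $H_k\searrow 0$. Your fallback is the correct one and is precisely what the paper does: it bounds $\sup_t\Vert\Bb_t\Vert_{\Hcal}\leq\sup_t\sum_k\lambda_k|B_t^{H_k}|\leq\sum_k\lambda_k\sup_t|B_t^{H_k}|$, takes expectations to get $\sum_k\lambda_k C/\sqrt{H_k}<\infty$ via the standing hypothesis $\lambda/\sqrt{H}\in\ell^1$, and concludes that the partial sums are Cauchy in $L^1(\Omega;C([0,T];\Hcal))$ and converge almost surely to a continuous limit. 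So the paper works in $L^1$ with the $\ell^1$-domination of the $\Hcal$-norm, rather than with your $L^2$/orthogonality decomposition; your $L^2$ variant could also be made to work under the paper's hypotheses (since $\lambda/\sqrt{H}\in\ell^1\subset\ell^2$ and $\Ebb[\sup_t|B_t^{H}|^2]\lesssim 1/H$ by Gaussian concentration), but only after replacing the hoped-for uniform constant by the explicit $H$-dependent one. In short: the idea is right, the execution must route through the $\lambda/\sqrt{H}$ condition from the start rather than treating it as an optional alternative.
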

	\begin{proof}
		Note first that due to \cite{borovkov2017bounds}[Theorem 1] for any fractional Brownian motion $B^H$ with Hurst parameter $H \in (0,\frac{1}{2})$ there exists a constant $C>0$ independent of $H$ such that
		\begin{align}\label{eq:boundMaximalfBm}
			\EW{\sup_{t\in[0,T]} \left\vert B_t^{H} \right\vert} \leq \frac{C}{\sqrt{H}}.
		\end{align}
		Using monotone convergence and \eqref{eq:boundMaximalfBm} we have that
		\begin{align*}
			\EW{\sup_{t\in[0,T]} \left\Vert \Bb_t \right\Vert_{\Hcal}} &\leq \EW{\sup_{t\in[0,T]} \sum_{k\geq 1} \vert \lambda_k \vert \left\vert B_t^{H_k} \right\vert} \leq \sum_{k\geq 1} \lambda_k \EW{\sup_{t\in[0,T]} \left\vert B_t^{H_k} \right\vert} \\
			&\leq \sum_{k\geq 1} \lambda_k \frac{C}{\sqrt{H_k}} < \infty.
		\end{align*}
		Thus, $(\sqrt{Q}B_t^H )_{t\in[0,T]}$ is almost surely finite and $\lbrace ( \pi_d \sqrt{Q}B_t^H )_{t\in[0,T]} \rbrace_{d\geq 1}$ is a Cauchy sequence in $L^1( \Omega; \Ccal([0,T];\Hcal) )$ which converges almost surely to $(\sqrt{Q}B_t^H )_{t\in[0,T]}$.
	\end{proof}

Before we come to the next result, let us recall the notion of a weak solution and uniqueness in law.

\begin{definition}\label{def:weakSolution}
	The sextuple $(\Omega, \Fcal, \Fbb, \Pbb, \Bb, X)$ is called a weak solution of stochastic differential equation \eqref{eq:MainSDE}, if
		\begin{enumerate}[(i)]
			\item $(\Omega, \Fcal, \Fbb, \Pbb)$ is a complete filtered probability space, where $\Fbb = \lbrace \Fcal_t \rbrace_{t\in [0,T]}$ satisfies the usual conditions of right-continuity and completeness,
			\item  $\Bb=(\Bb_t)_{t\in [0,T]}$ is a weighted cylindrical fractional $(\Fbb, \Pbb)$-Brownian motion as defined in \eqref{eq:Bb}, and
			\item $X = (X_t)_{t\in [0,T]}$  is a continuous, $\Fbb$-adapted, $\Hcal$-valued process satisfying $\Pbb$-a.s.
			\begin{align*}
				X_t = x+ \int_0^t b(s,X_s) ds + \Bb_t, \quad t\in[0,T].
			\end{align*}
		\end{enumerate}
\end{definition}

	\begin{remark}
		For notational simplicity we refer solely to the process $X$ as a weak solution (or later on as a strong solution) in the case of an unambiguous stochastic basis $(\Omega, \Fcal, \Fbb, \Pbb, B)$.
	\end{remark}

\begin{definition}\label{def:weakUniqueness}
	We say a weak solution $X^1$ with respect to the stochastic basis $(\Omega^1, \Fcal^1,\Fbb^1, \Pbb^1, \Bb^1)$ of the SDE \eqref{eq:MainSDE} is \emph{weakly unique} or \emph{unique in law}, if for any other weak solution $X^2$ of \eqref{eq:MainSDE} on a potential other stochastic basis $(\Omega^2, \Fcal^2,\Fbb^2, \Pbb^2, \Bb^2)$ it holds that 
	\begin{align*}
			\Pbb^1_{X^1} = \Pbb^2_{X^2},
	\end{align*}
		whenever $\Pbb^1_{X_0^1} = \Pbb^2_{X_0^2}$.
\end{definition}

\begin{proposition}\label{prop:weakSolution}
	Let $b:[0,T] \times \Hcal \to \Hcal$ be a measurable and bounded function with $\left\Vert b_k \right\Vert_{\infty} \leq C_k \lambda_k < \infty$ for every $k\geq 1$ where $C := \left\lbrace C_k \right\rbrace_{k\geq 1} \in \ell^1$. Then SDE \eqref{eq:MainSDE} has a weak solution $(X_t)_{t\in[0,T]}$ such that 
	\begin{align*}
		\EW{\sup_{t\in[0,T]} \left\Vert X_t \right\Vert_{\Hcal}^2} < \infty.
	\end{align*}
	Moreover, the solution is unique in law.
\end{proposition}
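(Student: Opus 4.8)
The strategy is the classical weak-existence-plus-uniqueness-in-law argument via a change of measure, here carried out with the fractional Girsanov theorem (\Cref{thm:Girsanov}) in its dimension-wise infinite-dimensional form (\Cref{rem:GirsanovInfty}). On the original space $(\Omega, \Fcal, \Fbb^W, \Pbb)$ I would set $X_t := x + \Bb_t$ and look for a measure $\widetilde{\Pbb}$ under which the drift materialises, i.e. under which
\begin{align*}
	\Bbtilde_t := \Bb_t - \int_0^t b(s, X_s)\,ds
\end{align*}
is again a weighted cylindrical fractional Brownian motion. Projecting onto the $k$-th dimension and dividing by $\lambda_k$ exhibits the shift $u_s^{(k)} := -\lambda_k^{-1} b_k(s, X_s)$, so that $\lambda_k^{-1}\Bbtilde_t^{(k)} = B_t^{H_k} + \int_0^t u_s^{(k)}\,ds$ is the object to which \Cref{thm:Girsanov} is applied. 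If the stochastic exponential $\Ecal_T = \prod_{k\geq 1}\Ecal_T^{(k)}$ is a genuine probability density, then $(\Omega, \Fcal, \Fbb^W, \widetilde{\Pbb}, \Bbtilde, X)$ with $d\widetilde{\Pbb}/d\Pbb = \Ecal_T$ is the desired weak solution.

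\textbf{Verifying the Girsanov conditions.} This is the heart of the matter. Since $b$ is bounded with $\left\Vert b_k \right\Vert_\infty \leq C_k \lambda_k$, the shift is \emph{deterministically} bounded, $\left\vert u_s^{(k)} \right\vert \leq C_k$, and $t \mapsto \int_0^t u_s^{(k)}\,ds$ is Lipschitz, hence absolutely continuous, so representation \eqref{eq:inverseKH} applies. Using $I_{0^+}^{\alpha} s^{\beta} = \frac{\Gamma(\beta+1)}{\Gamma(\beta+1+\alpha)} s^{\beta+\alpha}$ one gets the pointwise bound
\begin{align*}
	\left\vert K_{H_k}^{-1}\Big(\textstyle\int_0^{\cdot} u_r^{(k)}\,dr\Big)(s) \right\vert \leq C_k\, \frac{\Gamma\!\left(\frac{3}{2}-H_k\right)}{\Gamma(2-2H_k)}\, s^{\frac{1}{2}-H_k},
\end{align*}
which already belongs to $L^2([0,T])$, giving condition (i) (equivalently, membership in $I_{0^+}^{H_k+\frac12}(L^2)$ via the isomorphism $K_{H_k}$). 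Squaring and integrating yields $\int_0^T (K_{H_k}^{-1}(\cdots))^2\,ds \lesssim C_k^2\, c(H_k)^2\, T^{2-2H_k}$ with $c(H) = \Gamma(\tfrac32-H)/\Gamma(2-2H)$ bounded on $(0,\tfrac12)$ and $T^{2-2H_k} \leq T^2$. Because $C \in \ell^1 \subset \ell^2$, the series $\sum_{k\geq 1}\int_0^T (K_{H_k}^{-1}(\cdots))^2\,ds$ converges. The deterministic, summable bound on the quadratic variations is what makes everything work: each $\Ecal_T^{(k)}$ is a true martingale density by Novikov, and for the partial products $\prod_{k\leq N}\Ecal_T^{(k)} = \mathcal{E}\big(\sum_{k\leq N} M^{(k)}\big)_T$ the total quadratic variation is bounded uniformly in $N$, giving a uniform $L^p$ bound ($p>1$), hence uniform integrability, hence convergence in $L^1$ and $\Ebb_{\Pbb}[\Ecal_T] = 1$. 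I expect this infinite-product martingale-property step to be the main obstacle.

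\textbf{Moment estimate.} Under $\widetilde{\Pbb}$ we have $X_t = x + \int_0^t b(s,X_s)\,ds + \Bbtilde_t$ with $\Bbtilde$ a weighted cylindrical fractional Brownian motion, so $\left\Vert X_t \right\Vert_{\Hcal} \leq \left\Vert x \right\Vert_{\Hcal} + T\left\Vert b \right\Vert_{\infty,\Hcal} + \left\Vert \Bbtilde_t \right\Vert_{\Hcal}$, where $\left\Vert b \right\Vert_{\infty,\Hcal}^2 \leq \sum_k C_k^2 \lambda_k^2 < \infty$ since $C \in \ell^\infty$ and $\lambda \in \ell^2$. Because $\Bbtilde$ under $\widetilde{\Pbb}$ has the same law as $\Bb$ under $\Pbb$, it suffices to bound $\Ebb_{\Pbb}[\sup_{t}\left\Vert \Bb_t \right\Vert_{\Hcal}^2]$. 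Estimating $\left\Vert \Bb_t \right\Vert_{\Hcal} \leq \sum_k \lambda_k \left\vert B_t^{H_k} \right\vert$, Minkowski's inequality reduces this to $\sum_k \lambda_k\, \Ebb[\sup_t |B_t^{H_k}|^2]^{1/2}$; combining the maximal bound \eqref{eq:boundMaximalfBm} with a Borell--TIS variance bound gives $\Ebb[\sup_t|B_t^H|^2]^{1/2} \lesssim H^{-1/2} + T^{H}$, and summability follows from $\lambda/\sqrt{H} \in \ell^1$ (which also forces $\lambda \in \ell^1$). This yields $\Ebb_{\widetilde{\Pbb}}[\sup_{t\in[0,T]}\left\Vert X_t \right\Vert_{\Hcal}^2] < \infty$.

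\textbf{Uniqueness in law.} For an arbitrary weak solution on some basis $(\Omega', \Fcal', \Fbb', \Pbb', \Bb', X')$ I would run the construction in reverse. The driving Brownian motions are recovered from $X'$ pathwise through $W'^{(k)} = \int_0^{\cdot} K_{H_k}^{-1}(d B'^{H_k})$ with $B'^{H_k} = \lambda_k^{-1}(X'^{(k)} - x^{(k)})$, so the reciprocal density $(\Ecal_T')^{-1}$, built from the same shift $u^{(k)} = -\lambda_k^{-1} b_k(\cdot, X')$, is a measurable functional of the path of $X'$ alone. Defining $\Qbb'$ by $d\Qbb'/d\Pbb' = (\Ecal_T')^{-1}$, the inverse Girsanov transform turns $X' - x$ into a weighted cylindrical fractional Brownian motion under $\Qbb'$, whose law is fixed. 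Then $\Pbb'(X' \in A) = \Ebb_{\Qbb'}[\Ecal_T'\, \mathbf{1}_{\{X' \in A\}}]$ depends only on the (fixed) $\Qbb'$-law of the path $X'$, hence is the same for every weak solution, which is exactly uniqueness in law in the sense of \Cref{def:weakUniqueness}.
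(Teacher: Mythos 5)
Your proposal is correct and follows essentially the same route as the paper: a Girsanov change of measure for the cylindrical fractional Brownian motion, with the deterministic bound $\vert u_s^{(k)}\vert \leq C_k$ and $C\in\ell^1$ feeding into representation \eqref{eq:inverseKH} and Novikov's criterion to make the infinite-dimensional stochastic exponential a true density, followed by equivalence of measures for uniqueness in law. The only differences are ones of detail rather than of method — you spell out the uniform integrability of the partial products $\prod_{k\leq N}\Ecal_T^{(k)}$, the second-moment bound via a Borell--TIS supplement to \eqref{eq:boundMaximalfBm}, and the reverse-Girsanov argument behind uniqueness in law, all of which the paper's proof leaves implicit.
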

\begin{proof}
	Let $\lbrace W^{(k)} \rbrace_{k\geq 1}$ be a sequence of independent standard Brownian motions on the filtered probability space $(\Omega, \Fcal, \Fbb, \Qbb)$. Consider the cylindrical fractional Brownian motion $\Bhat^H$ generated by $\lbrace W^{(k)} \rbrace_{k\geq 1}$ as defined in \eqref{eq:cylindricalFBm} with associated sequence of Hurst parameters $H$. We define the stochastic exponential $\Ecal$ by
		\begin{align*}
		\begin{split}
			\Ecal_t := \exp &\left\lbrace \sum_{k\geq 1} \left( \int_0^t K_{H_k}^{-1} \left( \int_0^{\cdot} b_k \left(u,x + \sqrt{Q}\Bhat^H_u \right)\lambda_k^{-1} du \right)(s) dW_s^{(k)} \right. \right. \\
			&\quad \left.\left. - \frac{1}{2} \int_0^t K_{H_k}^{-1} \left( \int_0^{\cdot} b_k\left(u,x + \sqrt{Q}\Bhat^H_u \right)\lambda_k^{-1} du \right)^2 (s) ds \right) \right\rbrace.
		\end{split}
		\end{align*}
	In order to show that the stochastic exponential $\Ecal$ is well-defined we first have to verify that for every $k\geq 1$
	\begin{align*}
		\int_0^{\cdot} b_k \left(u,x + \sqrt{Q}\Bhat^H_u \right)\lambda_k^{-1} du \in I^{H_k+\frac{1}{2}}_{0+}\left(L^2([0,T])\right), ~ \Pbb-\text{a.s.}.
	\end{align*}
		Due to \eqref{eq:inverseKH} this property is fulfilled, if for all $k\geq 1$
		\begin{align*}
			\int_0^T \left( b_k \left(u,x + \sqrt{Q}\Bhat^H_u \right)\lambda_k^{-1}\right)^2 du < \infty,
		\end{align*}
		which holds since $\Vert b_k \Vert_{\infty} \leq C_k \lambda_k$. Furthermore, we can find a constant $C>0$ such that
		\begin{align*}
			\exp&\left\lbrace \frac{1}{2} \sum_{k \geq 1} \int_0^T K_{H_k}^{-1} \left( \int_0^{\cdot} b_k \left(u,x + \sqrt{Q}\Bhat^H_u \right)\lambda_k^{-1} du \right)^2 (s) ds \right\rbrace \\
			&\leq \exp \left\lbrace CT^2 \sum_{k\geq 1} C_k^2 \right\rbrace < \infty.
		\end{align*}
		Hence, by Novikov's criterion $\Ecal_t$ is a martingale, in particular $\EW{\Ecal_t}=1$ for all $t\in[0,T]$. Consequently, under the probability measure $\Pbb$, defined by $\frac{d\Pbb}{d\Qbb} := \Ecal_T$, the process $B_t^{H} := \Bhat_t^H - \int_0^t \sqrt{Q}^{-1} b\left(u,x + \sqrt{Q}\Bhat^H_u \right) du$, $t\in[0,T]$, is a cylindrical fractional Brownian motion due to \Cref{thm:Girsanov} and \Cref{rem:GirsanovInfty}. Therefore, ${(\Omega,\Fcal,\Fbb,\Pbb,\sqrt{Q}B^H, X)}$, where $X_t := x + \sqrt{Q} \Bhat_t^H$, is a weak solution of SDE \eqref{eq:MainSDE}. Since the probability measures $\Qbb \approx \Pbb$ are equivalent, the solution is unique in law.
\end{proof}

\section{Strong Solutions and Malliavin Derivative}\label{sec:strongSol}
	After establishing the existence of a weak solution, we investigate under which conditions SDE \eqref{eq:MainSDE} has a strong solution. Therefore, let us first recall the notion of a strong solution and moreover the notion of pathwise uniqueness.

\begin{definition}\label{def:strongSolution}
	A weak solution $(\Omega, \Fcal, \Fbb^\Bb, \Pbb, \Bb, X^x)$ of the stochastic differential equation \eqref{eq:MainSDE} is called \emph{strong solution}, if $\Fbb^\Bb$ is the filtration generated by the driving noise $\Bb$ and augmented with the $\Pbb$-null sets.
\end{definition}

\begin{definition}\label{def:pathwiseUniqueness}
	We say a weak solution $(\Omega, \Fcal, \Fbb, \Pbb, \Bb, X^1)$ of \eqref{eq:MainSDE} is \emph{pathwise unique}, if for any other weak solution $(\Omega, \Fcal, \Fbb, \Pbb, \Bb, X^2)$ on the same stochastic basis,
	\begin{align*}
		\Pbb\left( \omega\in \Omega: X^1_t(\omega) = X^2_t(\omega) ~\forall t\geq 0 \right) = 1.
	\end{align*}
\end{definition}

	The cause of this paper is to establish the existence of strong solutions of stochastic differential equation \eqref{eq:MainSDE} for \emph{singular} drift coefficients $b$. More precisely, we define the class $\Bfrak([0,T] \times \Hcal; \Hcal)$ of measurable functions $b:[0,T]\times \Hcal \to \Hcal$ for which there exist sequences $C \in \ell^1$ and $D \in \ell^1$ such that for every $k\geq 1$ \vspace{0.2cm}
\begin{align}\label{eq:conditionDrift}
\begin{split}
	\sup_{y\in \Hcal} \sup_{t\in [0,T]} \vert b_k( t, y) \vert &\leq C_k \lambda_k, \text{ and} \\ 
	\sup_{d\geq 1} \int_{\Rbb^d} \sup_{t\in [0,T]} \vert b_k\left( t, \sqrt{Q} \sqrt{\Kcal} \tau^{-1} y \right) \vert dy &\leq D_k \lambda_k,
\end{split}
\end{align}
	where $y=(y_1, \dots, y_d)$ and $\Kcal: \Hcal \to \Hcal$ is the defined by
	\begin{align}\label{eq:KOperator}
		\Kcal x = \sum_{k\geq 1} \Kfrak_{H_k} x^{(k)} e_k, ~x \in \Hcal,
	\end{align}
	for $\lbrace \Kfrak_{H_k} \rbrace_{k\geq 1}$ being the local non-determinism constant of $\lbrace B^{H_k} \rbrace_{k\geq 1}$ as given in \Cref{lem:localNonDeterminism}. \par
	In order to prove the existence of a strong solution for drift coefficients of class $\Bfrak([0,T] \times \Hcal; \Hcal)$ we proceed in the following way:
	\begin{enumerate}[\quad 1)]
		\item We define an approximating double-sequence $\lbrace b^{d,\varepsilon} \rbrace_{d\geq 1, \varepsilon>0}$ for drift coefficients of type \eqref{eq:conditionDrift} which merely act on $d$ dimensions and are sufficiently smooth
		\item For every $d\geq 1$ and $\varepsilon >0$, we prove that the SDE
		\begin{align}\label{eq:approximationSDE}
			X_t^{d,\varepsilon} = x + \int_0^t b^{d,\varepsilon}(s,X_s^{d,\varepsilon}) ds + \Bb_t, ~t\in [0,T],
		\end{align}
		has a unique strong solution which is Malliavin differentiable
		\item We show that the double-sequence of strong solutions $X_t^{d,\varepsilon}$ converges weakly to $\Ebb\left[ X_t \vert \Fcal_t^W \right]$, where $X_t$ is the unique weak solution of SDE \eqref{eq:MainSDE}
		\item Applying a compactness criterion based on Malliavin calculus, we prove that the double-sequence is relatively compact in $L^2(\Omega,\Fcal_t^W)$
		\item Last, we show that $X_t$ is adapted to the filtration $\Fbb^\Bb$ and thus is a strong solution of SDE \eqref{eq:MainSDE}
	\end{enumerate}

\subsection{Approximating double-sequence}
	Recall the truncation operator $\pi_d$, $d\geq 1$, defined in \eqref{eq:truncationOperator} and the change of basis operator $\tau$ defined in \eqref{eq:changeOfBasis}. We define the operator $\pitilde_d: \Hcal \to \Rbb^d$ as $\pitilde_d := \tau \circ \pi_d$. For every $k\geq 1$ let the function $\btilde^d:[0,T] \times \Rbb^d \to \Rbb^d$ be defined by
\begin{align}\label{eq:truncationR}
	\btilde^d (t,z) = \pitilde_d b\left( t, \tau^{-1} z \right).
\end{align}
Let $\varphi_\varepsilon$, $\varepsilon>0$, be a mollifier on $\Rbb^d$ such that for any locally integrable function $f: [0,T] \times \Rbb^d \to \Rbb^d$ and for every $t \in [0,T]$ the convolution $f(t,\cdot) \ast \varphi_\varepsilon$ is smooth and
\begin{align*}
	f(t, \cdot) \ast \varphi_\varepsilon \rightarrow f(t,\cdot), ~\varepsilon \to 0,
\end{align*}
almost everywhere with respect to the Lebesgue measure. Finally, we define for every $d\geq 1$ and $\varepsilon>0$ the double-sequence $b^{d,\varepsilon}:[0,T] \times \Hcal \to \Hcal$ by
\begin{align}\label{eq:approximationDrift}
	b^{d,\varepsilon}(t,y) := \tau^{-1} \left( \btilde^d (t,\pitilde_d y) * \varphi_\varepsilon(\pitilde_d y)\right).
\end{align}
Analogously to \eqref{eq:truncationR}, we define for $t\in [0,T]$ and $z \in \Rbb^d$
\begin{align}\label{eq:approximationDriftR}
	\btilde^{d,\varepsilon}(t, z) := \tau b^{d,\varepsilon}(t, \tau^{-1} z) = \btilde^d (t, z) * \varphi_\varepsilon(z).
\end{align}
Due to the definition of the mollifier $\varphi_\varepsilon$ we have that for every $d \geq 1$
\begin{align}\label{eq:convergenceMollifier}
	b^{d,\varepsilon}(t, \tau^{-1} z) = \tau^{-1} \left( \btilde^d (t, z) * \varphi_\varepsilon(z) \right) \xrightarrow[\varepsilon\to 0]{} \tau^{-1} \btilde^d(t,z) = b^d (t, \tau^{-1} z)
\end{align}
for almost every $(t, z) \in [0,T] \times \Rbb^d$ with respect to the Lebesgue measure. Thus, due to \eqref{eq:convergenceMollifier} and the canonical properties of the truncation operator we have that
\begin{align*}
	b^{d,\varepsilon}(t, y) \xrightarrow[\varepsilon \to 0]{} b^d (t,y) \xrightarrow[d\to \infty]{} b(t,y)
\end{align*}
pointwise in $[0,T] \times \Hcal$, where $b^d := \pi_d b$. Due to the assumptions on $b$ we further get for every $p\geq 2$ using dominated convergence that
\begin{align*}
	\lim_{d\to \infty} \lim_{\varepsilon \to 0} \ETp{b^{d,\varepsilon}(t,\Bb_t^x) - b(t,\Bb_t^x)}{p} = 0.
\end{align*}
Hence, we can speak of an approximating double-sequence $\lbrace b^{d,\varepsilon} \rbrace_{d\geq 1, \varepsilon>0}$ of the drift coefficient $b$. In line with the previously used notation we define
\begin{align*}
	b_k^{d,\varepsilon}(t,y) := \langle b^{d,\varepsilon}(t,y), e_k \rangle_\Hcal &= \langle \btilde^{d,\varepsilon}(t,\tau y), \etilde_k \rangle =: \btilde_k^{d,\varepsilon}(t,\tau y), \\
	b_k^d (t,y) := \langle b^d (t,y), e_k \rangle_\Hcal &= \langle \btilde^d (t,\tau y), \etilde_k \rangle =: \btilde_k^d (t,\tau y).
\end{align*}
Moreover, note that $b^{d,\varepsilon}, b^d \in \Bfrak([0,T] \times \Hcal; \Hcal)$.

\begin{remark}
	Note that we needed to truncate and shift the domain of the function $b$ to $\Rbb^d$ merely in order to apply mollification.
\end{remark}

\subsection{Malliavin differentiable strong solutions for regular drifts}
	In the following proposition we establish the existence of a unique strong solution for a class of drift coefficients which contains the approximating sequence $\lbrace b^{d,\varepsilon} \rbrace_{d\geq 1, \varepsilon>0}$. More specifically, we consider drift coefficients $b \in \Bfrak([0,T]\times \Hcal; \Hcal)$ such that for all $k\geq 1$ and all $t\in [0,T]$
\begin{align*}
	b_k(t,\cdot) \in \Lip_{L_k}(\Hcal;\Rbb),
\end{align*}
where $L \in \ell^2$. We denote the space of such functions by $\Lfrak([0,T]\times \Hcal;\Hcal)$.

\begin{proposition}\label{prop:strongSolutionRegular}
	Let $b \in \Lfrak([0,T]\times \Hcal;\Hcal)$. Then SDE \eqref{eq:MainSDE} has a pathwise unique strong solution.
\end{proposition}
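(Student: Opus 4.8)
The plan is to exploit the fact that the componentwise Lipschitz data $L\in\ell^2$ upgrades to a genuine global Lipschitz condition on the full vector field, so that the classical Cauchy--Lipschitz (Picard--Lindel\"of) machinery in the Banach space $\Hcal$ applies path by path. The key observation is that for $b\in\Lfrak([0,T]\times\Hcal;\Hcal)$ and any $x,y\in\Hcal$,
\[
	\left\Vert b(t,x)-b(t,y)\right\Vert_{\Hcal}^2 = \sum_{k\geq 1}\left\vert b_k(t,x)-b_k(t,y)\right\vert^2 \leq \Big(\sum_{k\geq 1}L_k^2\Big)\left\Vert x-y\right\Vert_{\Hcal}^2,
\]
so that $b(t,\cdot)$ is globally Lipschitz on $\Hcal$ with the finite constant $\Vert L\Vert_{\ell^2}$, uniformly in $t$; simultaneously the $\Bfrak$-bound gives $\Vert b(t,y)\Vert_{\Hcal}^2\leq\sum_{k\geq 1}C_k^2\lambda_k^2<\infty$ (finite since $\lambda\in\ell^2$ and $C\in\ell^1$ is bounded), i.e.\ $b$ is bounded, say by $M$. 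Thus \eqref{eq:MainSDE} is an ODE with globally Lipschitz, bounded drift perturbed additively by the continuous noise $\Bb$.

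Next I would solve \eqref{eq:MainSDE} pathwise. Fixing $\omega$ in the full-measure set on which $t\mapsto\Bb_t(\omega)$ is continuous (such a set exists by the lemma established above) and setting $Y_t:=X_t-\Bb_t$, the equation becomes the random integral equation $Y_t=x+\int_0^t b(s,Y_s+\Bb_s)\,ds$. I would then run a Banach fixed point argument on $\Ccal([0,T];\Hcal)$ for the map
\[
	\Gamma(Y)_t:=x+\int_0^t b\bigl(s,Y_s+\Bb_s(\omega)\bigr)\,ds ,
\]
which is well defined (its image is even $M$-Lipschitz in $t$, hence continuous) and satisfies, by iterating the Lipschitz estimate,
\[
	\left\Vert \Gamma^n(Y)-\Gamma^n(\Ytilde)\right\Vert_{\infty} \leq \frac{\bigl(\Vert L\Vert_{\ell^2}\,T\bigr)^n}{n!}\left\Vert Y-\Ytilde\right\Vert_{\infty}.
\]
For $n$ large $\Gamma^n$ is a contraction, so there is a unique fixed point $Y$, whence $X=Y+\Bb$ is the unique pathwise solution of \eqref{eq:MainSDE}; pathwise uniqueness in the sense of \Cref{def:pathwiseUniqueness} is then immediate from this deterministic uniqueness (alternatively by a direct Gr\"onwall argument on the difference of two solutions).

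To conclude that $X$ is a \emph{strong} solution in the sense of \Cref{def:strongSolution}, I would track measurability through the Picard iterates $Y^{(0)}:=x$, $Y^{(n+1)}:=\Gamma(Y^{(n)})$: each $Y^{(n)}$ is continuous and $\Fbb^{\Bb}$-adapted (being an integral of an $\Fbb^{\Bb}$-adapted continuous integrand), and they converge uniformly in $t$, $\Pbb$-almost surely, to $Y$. Hence $Y$, and therefore $X=Y+\Bb$, is continuous and $\Fbb^{\Bb}$-adapted, so $(\Omega,\Fcal,\Fbb^{\Bb},\Pbb,\Bb,X)$ is a strong solution. I expect no deep obstacle here: the only point that really has to be got right is that the hypothesis $L\in\ell^2$ (rather than merely $L_k<\infty$ for each $k$) is exactly what makes the infinite-dimensional vector field globally Lipschitz with a finite constant, after which everything reduces to the standard Cauchy--Lipschitz theory; the remaining care is the routine adaptedness bookkeeping via the Picard iterates and the use of the a.s.\ continuity of $\Bb$ to legitimize the pathwise construction.
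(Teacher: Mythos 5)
Your proposal is correct and follows essentially the same route as the paper: both arguments hinge on the observation that $L\in\ell^2$ upgrades the componentwise Lipschitz bounds to a global Lipschitz constant $\Vert L\Vert_{\ell^2}$ for $b(t,\cdot)$ on $\Hcal$, then run a Picard iteration with the usual factorial decay, read off adaptedness from the iterates, and get uniqueness from Gr\"onwall. The only (cosmetic) difference is that you perform the fixed-point argument pathwise in $\Ccal([0,T];\Hcal)$ for each fixed continuous noise path, whereas the paper carries out the iteration in $L^2([0,T]\times\Omega;\Hcal)$; since the drift is bounded and $\Bb$ is a.s.\ continuous, the two are interchangeable here.
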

\begin{proof}
	In order to prove the existence of a strong solution we use Picard iteration and proceed similar to the well-known case of finite dimensional SDEs. More precisely, we define inductively the sequence $Y^0 := x + \Bb$ and for all $n\geq 1$
	\begin{align}\label{eq:sequencePicard}
		Y_t^n = x + \int_0^t b\left(s,Y_s^{n-1}\right) ds + \Bb_t, ~ t\in[0,T].
	\end{align}
	We show next that $\lbrace Y^n \rbrace_{n\geq 0}$ is a Cauchy sequence in $L^2([0,T]\times \Omega)$. Indeed, due to monotone convergence we get for every $n \geq 1$ and $t\in[0,T]$
	\begin{align}\label{eq:PicardBound}
		\EHp{ Y_t^{n+1} - Y_t^n } &= \EHp{ \int_0^t b(s,Y_s^n) - b(s,Y_s^{n-1})  ds } \\
		&\leq \int_0^t \left( \sum_{k \geq 1} \EpO{ b_k(s,Y_s^n) - b_k(s,Y_s^{n-1})}{2} \right)^{\frac{1}{2}} ds \notag\\
		&\leq \Vert L \Vert_{\ell^2} \int_0^t \EHp{ Y_s^n - Y_s^{n-1} } ds, \notag
	\end{align}
	and 
	\begin{align*}
		\EHp{ Y_t^1 - Y_t^0 } &= \EHp{ \int_0^t b(s,x+\Bb_s) ds } \leq t \Vert C \lambda \Vert_{\ell^2}.
	\end{align*}
	By induction we obtain for every $n\geq 0$ a constant $A$ depending on $C$, $\lambda$ and $L$ such that
	\begin{align*}
		\EHp{Y_t^{n+1} - Y_t^n} \leq \frac{A^{n+1}}{(n+1)!} t^{n+1}.
	\end{align*}
	Hence, for every $m,n \geq 0$
	\begin{align*}
		\Vert Y^m - Y^n \Vert_{L^2([0,T]\times \Omega; \Hcal)} &\leq \sum_{k=n}^{m-1} \Vert Y^{k+1} - Y^k \Vert_{L^2([0,T]\times \Omega;\Hcal)} \\
		&= \sum_{k=n}^{m-1} \ETp{Y_t^{k+1} - Y_t^k}{2} \\
		&\leq \sum_{k=n}^{m-1} \frac{A^{k+1}}{(k+1)!} T^{k+\frac{3}{2}} =: B(n,m).
	\end{align*}
	Since $B(n,m)$ is bounded by $T^{\frac{1}{2}} e^{AT}$, the series converges and $$B(n,m) \xrightarrow[n,m\to\infty]{} 0.$$ Therefore $\lbrace Y^n \rbrace_{n\geq 0}$ is a Cauchy sequence in $L^2([0,T]\times \Omega;\Hcal)$. Define $$X_t := \lim_{n\to \infty} Y_t^n$$ as the $L^2([0,T]\times \Omega;\Hcal)$ limit of $\lbrace Y^n \rbrace_{n\geq 0}$. Then $X_t$ is $\Fcal^{\Bb}_t$ adapted for all $t\in[0,T]$ since this holds for all $Y_t^n$, $n\geq 0$. We prove that $X_t$ solves SDE \eqref{eq:MainSDE}: \par
	We have for all $n \geq 0$ and $t\in [0,T]$ that $$Y_t^{n+1} = x + \int_0^t b(s,Y_s^n)ds + \Bb_t.$$ Using the Lipschitz continuity of $b$, we get
	\begin{align*}
		\EHp{ \int_0^t b(s,Y_s^n) - b(s,X_s) ds } &\leq \int_0^t \left( \sum_{k\geq 1} \EpO{ b_k(s,Y_s^n) - b_k(s,X_s) }{2} \right)^{\frac{1}{2}} ds \\
		&\leq \Vert L \Vert_{\ell^2} \int_0^t \EHp{ Y_s^n - X_s } ds \xrightarrow[n\to \infty]{} 0.
	\end{align*}
	Hence, $(X_t)_{t\in[0,T]}$ is a strong solution of SDE \eqref{eq:MainSDE}. \par
	In order to show pathwise uniqueness, let $X$ and $Y$ be two strong solutions on the same stochastic basis $(\Omega, \Fcal, \Pbb, \Bb)$ with the same initial condition. Then for all $t\in [0,T]$ we get similar to \eqref{eq:PicardBound} that
	\begin{align*}
		\EHp{ X_t - Y_t } \leq \Vert L \Vert_{\ell^2}\int_0^t \EHp{ X_s - Y_s } ds.
	\end{align*}
	Using Grönwall's inequality yields that $\EH{ X_t - Y_t } = 0$ for all $t\in[0,T]$, and therefore $X_t = Y_t$ $\Pbb$-a.s. for all $t\in[0,T]$. But since $X$ and $Y$ are almost surely continuous we get $$\Pbb\left( \omega\in \Omega: X^1_t(\omega) = X^2_t(\omega) ~\forall t\geq 0 \right) = 1.$$
\end{proof}

Next we investigate under which conditions the unique strong solution is Malliavin differentiable. But let us start with a definition of Malliavin differentiability of a random variable in the space $\Hcal$.

\begin{definition}\label{def:MalliavinDerivative}
	Let $X$ be an $\Hcal$-valued square integrable functional of the cylindrical Brownian motion $(W_t)_{t\in[0,T]}$. We define the operator $D^m$, $m\geq 1$, such that
	\begin{align*}
		D^m X = \sum_{k\geq 1} D^m X^{(k)} e_k,
	\end{align*}
	as the Malliavin derivative in the direction of the $m$-th Brownian motion $W^{(m)}$. Here, $D^m X^{(k)}$, $m,k \geq 1$, is the (standard) Malliavin derivative with respect to the Brownian motion $W^{(m)}$ of the square integrable random variable $X^{(k)}$ taking values in $\Rbb$. We say a random variable $X$ with values in $\Hcal$ is in the space $\Dbb^{1,2}(\Hcal)$ of Malliavin differentiable functions in $L^2(\Omega)$ if and only if
	\begin{align*}
		\ND{X} := \sum_{m\geq 1} \int_0^T \EH{D_s^m X} ds < \infty.
	\end{align*}
	Moreover, a stochastic process $(X_t)_{t\in[0,T]}$ with values in $\Hcal$ is said to be in the space $\Dbb^{1,2}([0,T] \times \Hcal)$ if and only if for every $t\in [0,T]$
	\begin{align*}
		\ND{X_t} := \sum_{m \geq 1} \int_0^T \EH{D_s^m X_t} ds < \infty.
	\end{align*}
\end{definition}

By means of \Cref{def:MalliavinDerivative} we extend the well-known chain rule in Malliavin Calculus, cf. \cite[Proposition 1.2.4]{Nualart_MalliavinCalculus}, to Malliavin differentiable random variables taking values in $\Hcal$. But first we define the class $\Lcal_0(\Hcal)$ of Lipschitz continuous functions on $\Hcal$ with vanishing Lipschitz constants. \par
	We say a function $f:\Hcal \to \Hcal$ is in the space $\Lcal_0(\Hcal)$ if there exist sequences of constants $L, M \in \ell^2$ such that for all $k \geq 1$ and $x,y \in \Hcal$
	\begin{align}\label{eq:vanishingLipschitz}
		\vert \langle f(x) - f(y), e_k \rangle_\Hcal \vert \leq L_k \sum_{i\geq 1} M_i \vert \langle x - y, e_i \rangle_\Hcal \vert.
	\end{align}

\begin{lemma}\label{lem:chainRule}
	Let $f\in \Lcal_0(\Hcal)$ with associated Lipschitz sequences $L, M \in \ell^2$ and $Y \in \Dbb^{1,2}(\Hcal)$. Then, $f(Y) \in \Dbb^{1,2}(\Hcal)$ and there exists a double-sequence $\lbrace G_i^{(k)} \rbrace_{k,i \geq 1}$ of random variables with $G_i^{(k)} \leq L_k \cdot M_i$ $\Pbb$-a.s.  for all $k,i\geq 1$ such that for every $m\geq 1$
	\begin{align}\label{eq:ChainRule}
		D^m f(Y) = \sum_{k\geq 1} \sum_{i\geq 1} G_i^{(k)}  D^m \langle Y, e_i \rangle_{\Hcal}  e_k.
	\end{align}
	Moreover,
	\begin{align*}
		\NDO{f(Y)} \leq \Vert L \Vert_{\ell^2} \cdot \Vert M \Vert_{\ell^2} \cdot \NDO{Y}.
	\end{align*}
\end{lemma}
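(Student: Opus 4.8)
The plan is to prove the chain rule \eqref{eq:ChainRule} by combining the component-wise finite-dimensional Malliavin chain rule with a uniform bound that lets me sum over the infinitely many dimensions. Since $f \in \Lcal_0(\Hcal)$ satisfies the vanishing Lipschitz condition \eqref{eq:vanishingLipschitz}, each coordinate function $f^{(k)} := \langle f(\cdot), e_k \rangle_\Hcal$ is Lipschitz continuous on $\Hcal$. First I would reduce to the case where $f^{(k)}$ depends on only finitely many coordinates of its argument, or alternatively approximate $Y$ by its truncations $\pi_d Y$, so that the standard (scalar-valued) chain rule from \cite[Proposition 1.2.4]{Nualart_MalliavinCalculus} applies to $f^{(k)}(\pi_d Y)$. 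That proposition tells us that for a Lipschitz function the Malliavin derivative exists and the derivative of the composition is given by a chain-rule formula in which the ordinary gradient is replaced by a bounded measurable ``surrogate gradient'' $G_i^{(k)} := \partial_i f^{(k)}(Y)$, which exists $\Pbb$-a.s.\ because Lipschitz functions are differentiable Lebesgue-almost-everywhere (Rademacher) and the law of $Y$ is absolutely continuous in the relevant directions.

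Next I would extract the pointwise bound $|G_i^{(k)}| \leq L_k M_i$ directly from \eqref{eq:vanishingLipschitz}: dividing the inequality $|f^{(k)}(x) - f^{(k)}(y)| \leq L_k \sum_{i} M_i |\langle x-y, e_i\rangle_\Hcal|$ by $|\langle x-y,e_i\rangle_\Hcal|$ along the $e_i$-direction and passing to the limit shows that any directional derivative of $f^{(k)}$ in direction $e_i$ is bounded in absolute value by $L_k M_i$. This is where the surrogate gradients $G_i^{(k)}$ acquire their crucial uniform bound, which is what makes the subsequent infinite sums converge. Assembling the coordinates via $D^m f(Y) = \sum_{k\geq 1} D^m f^{(k)}(Y)\, e_k$ and applying the chain rule in each coordinate yields $D^m f(Y) = \sum_{k\geq 1}\sum_{i\geq 1} G_i^{(k)} D^m\langle Y, e_i\rangle_\Hcal\, e_k$, which is exactly \eqref{eq:ChainRule}.

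For the norm estimate and to confirm $f(Y) \in \Dbb^{1,2}(\Hcal)$, I would estimate $\ND{f(Y)} = \sum_{m}\int_0^T \EH{D_s^m f(Y)}\,ds$ using \eqref{eq:ChainRule}. Fixing $m$ and $s$, the squared $\Hcal$-norm of $D_s^m f(Y)$ is $\sum_k \bigl(\sum_i G_i^{(k)} D_s^m \langle Y,e_i\rangle_\Hcal\bigr)^2$. Bounding $|G_i^{(k)}| \leq L_k M_i$ and applying Cauchy--Schwarz in the $i$-sum gives $\bigl(\sum_i L_k M_i |D_s^m\langle Y,e_i\rangle_\Hcal|\bigr)^2 \leq L_k^2 \Vert M\Vert_{\ell^2}^2 \sum_i |D_s^m\langle Y,e_i\rangle_\Hcal|^2 = L_k^2 \Vert M\Vert_{\ell^2}^2 \Vert D_s^m Y\Vert_\Hcal^2$. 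Summing over $k$ produces the factor $\Vert L\Vert_{\ell^2}^2$, and then taking expectation, integrating in $s$, and summing over $m$ reproduces $\ND{Y}$, giving $\NDO{f(Y)} \leq \Vert L\Vert_{\ell^2}\Vert M\Vert_{\ell^2}\NDO{Y}$ and in particular finiteness.

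The main obstacle I anticipate is the rigorous justification of the existence of the surrogate gradients $G_i^{(k)}$ and the passage to the infinite-dimensional composition. The scalar chain rule in \cite{Nualart_MalliavinCalculus} is stated for functions of finitely many variables, so care is needed either to approximate $f^{(k)}$ by smooth functions (mollification) and pass to the limit using the uniform bound $|G_i^{(k)}|\leq L_k M_i$ to control the limit and identify it, or to invoke the density of smooth cylindrical functionals together with the closability of the Malliavin derivative operator $D^m$. The uniform bound is precisely what guarantees that the approximating derivatives remain bounded in $L^2$, so that the limit can be taken and the closedness of $D^m$ identifies $D^m f(Y)$ with the claimed series; the interchange of the $k$- and $i$-summations with the limit must be controlled by dominated convergence, again using $|G_i^{(k)}|\leq L_k M_i$ and $L,M\in\ell^2$.
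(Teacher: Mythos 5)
Your proposal follows essentially the same route as the paper: the finite-dimensional Lipschitz chain rule of \cite[Proposition 1.2.4]{Nualart_MalliavinCalculus} applied coordinate-wise, the uniform bound $|G_i^{(k)}|\leq L_k M_i$ extracted from \eqref{eq:vanishingLipschitz}, the Cauchy--Schwarz estimate in $i$ and summation in $k$ for the norm bound, and a passage to the limit over truncations $\pi_d$ using closability of $D^m$ and dominated convergence (the paper additionally extracts weakly convergent subsequences of the $d$-dependent coefficients $G_i^{d,(k)}$ to identify the limiting $\Gtilde_i^{(k)}$, which is the precise form of the limit identification you flag as the main obstacle). One small caution: your aside that the $G_i^{(k)}$ equal actual partial derivatives because ``the law of $Y$ is absolutely continuous'' is neither needed nor justified for general $Y\in\Dbb^{1,2}(\Hcal)$; the paper, like the bound-only version of Nualart's chain rule, works with the surrogate coefficients directly.
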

\begin{proof}
	First, consider the case $f: \Rbb^d \to \Rbb^d$ for some $d\geq 1$, where $Y$ is taking values in $\Rbb^d$. Using the chain rule, see \cite[Proposition 1.2.4]{Nualart_MalliavinCalculus}, and the notion of Malliavin Differentiability in \Cref{def:MalliavinDerivative}, there exists a double-sequence $\lbrace G_i^{(k)} \rbrace_{1 \leq k,i \leq d}$ of random variables with $G_i^{(k)} \leq L_k \cdot M_i$ $\Pbb$-a.s.  for all $1 \leq k,i \leq d$ such that for every $m\geq 1$
	\begin{align}\label{eq:ChainRuleReal}
		D^m f(Y) = \sum_{k=1}^d D^m f_k(Y) \etilde_k = \sum_{k=1}^d \sum_{i=1}^d G_i^{(k)} D^m \langle Y, \etilde_i \rangle \etilde_k.
	\end{align}
	Recall the change of basis operator $\tau: \Hcal \to \ell^2$ defined in \eqref{eq:changeOfBasis}. Let now $f: \Hcal_d \to \Hcal_d$, where $Y$ is taking values in $\Hcal_d$. Define $g: \Rbb^d \to \Rbb^d$ by $g := \tau \circ f \circ \tau^{-1}$. Then $g$ is Lipschitz continuous in the sense of \eqref{eq:vanishingLipschitz} with associated Lipschitz sequences $L, M \in \ell^2$ and due to equality \eqref{eq:ChainRuleReal} we get the identity
	\begin{align*}
		\tau D^m f(Y) &= \tau \sum_{k=1}^d D^m f_k(Y) e_k = \sum_{k=1}^d D^m g_k( \tau Y) \etilde_k \\
		&= \sum_{k=1}^d \sum_{i=1}^d G_i^{(k)} D^m \langle \tau Y, \etilde_i \rangle \etilde_k = \sum_{k=1}^d \sum_{i=1}^d G_i^{(k)}  D^m \langle Y, e_i \rangle_\Hcal \etilde_k \\
		&= \tau \sum_{k=1}^d \sum_{i=1}^d G_i^{(k)}  D^m \langle Y, e_i \rangle_{\Hcal}  e_k.
	\end{align*}
	Thus, equation \eqref{eq:ChainRule} holds for $f: \Hcal_d \to \Hcal_d$. Let finally $f: \Hcal \to \Hcal$, where $Y$ is taking values in $\Hcal$. Recall the truncation operator $\pi_d: \Hcal \to \Hcal_d$ defined in \eqref{eq:truncationOperator}. Since $f$ is Lipschitz continuous, $f(\pi_d Y)$ converges to $f(Y)$ in $L^2(\Omega)$. Furthermore, we have for every $d\geq 1$ that
	\begin{align}\label{eq:uniformBoundMalliavin}
		&\ND{\pi_d f(\pi_d Y)} = \sum_{m\geq 1} \int_0^T \EH{D_s^m(\pi_d f(\pi_d Y))} ds \\
		&\quad = \sum_{m \geq 1} \sum_{k=1}^d \int_0^T \EW{\left\vert \sum_{i=1}^d G_i^{d,(k)}  D_s^m \langle Y, e_i \rangle_{\Hcal}\right\vert^2} ds \notag\\
		&\leq \Vert L \Vert_{\ell^2}^2 \sum_{m \geq 1} \int_0^T \EpO{\sum_{i=1}^d M_i D_s^m \langle Y, e_i \rangle_{\Hcal}}{2} ds \notag \\
		&\quad \leq \Vert L \Vert_{\ell^2}^2 \cdot \Vert M \Vert_{\ell^2}^2 \sum_{m \geq 1} \int_0^T \EH{ D_s^m Y} ds  = \Vert L \Vert_{\ell^2}^2 \cdot \Vert M \Vert_{\ell^2}^2 \cdot \ND{Y} < \infty. \notag
	\end{align}
	Note that the double-sequence $\lbrace G_i^{d,(k)} \rbrace_{i\geq 1, k\geq 1}$ depends on $d\geq 1$. Nevertheless, $\NDO{\pi_d f(\pi_d Y)}$ is uniformly bounded in $d\geq 1$. Thus, due to \cite[Lemma 1.2.3]{Nualart_MalliavinCalculus} and dominated convergence we have $f(Y) \in \Dbb^{1,2}(\Hcal)$ and $D^m(\pi_d f(\pi_d Y))$ converges weakly to $D^m f(Y)$ for every $m\geq 1$. Moreover, the sequence $\lbrace G_i^{d,(k)} \rbrace_{d\geq 1}$ is bounded by $L_k \cdot M_i$ for every $k,i\geq 1$. Hence, for every $k,i \geq 1$ there exists a subsequence $\lbrace G_i^{d_n, (k)} \rbrace_{n\geq 1}$ which converges weakly to some random variable $\Gtilde_i^{(k)}$ which is bounded by $L_k \cdot M_i$. Summarizing we get that in $L^2([0,T] \times \Omega; \Hcal)$
	\begin{align*}
		D^m f(Y) &= \lim_{n \to \infty} \pi_{d_n} D^m f(\pi_{d_n} Y) = \lim_{n \to \infty} \sum_{k=1}^{d_n} \sum_{i=1}^{d_n} G_i^{d_n, (k)}  D^m \langle Y, e_i \rangle_{\Hcal}  e_k \\
		&= \sum_{k \geq 1} \sum_{i \geq 1} \Gtilde_i^{(k)}  D^m \langle Y, e_i \rangle_{\Hcal}  e_k,
	\end{align*}
	where the last equality holds due to \eqref{eq:uniformBoundMalliavin} and dominated convergence.
\end{proof}

\bigskip

Define the class $\Lfrak_0([0,T] \times \Hcal; \Hcal)$ by
\begin{align*}
	&\Lfrak_0([0,T]\times \Hcal; \Hcal) = \\
	&\quad \left\lbrace f \in \Bfrak([0,T]\times \Hcal; \Hcal): f(t,\cdot) \in \Lcal_0(\Hcal) \text{ uniformly in } t\in [0,T] \right\rbrace,
\end{align*}
and note that $f(t,\cdot) \in \Lcal_0(\Hcal)$ uniformly in $t\in [0,T]$ implies $f_k(t,\cdot) \in \Lip_{L_k}(\Hcal; \Rbb)$, $k\geq 1$, uniformly in $t\in [0,T]$ for some sequence $L \in \ell^2$. Thus, $\Lfrak_0([0,T] \times \Hcal; \Hcal) \subset \Lfrak([0,T] \times \Hcal; \Hcal)$.

\begin{proposition}\label{prop:MalliavinRegular}
	Let $b \in \Lfrak_0([0,T]\times \Hcal; \Hcal)$. Then the unique strong solution $(X_t)_{t\in[0,T]}$ of \eqref{eq:MainSDE} is Malliavin differentiable.
\end{proposition}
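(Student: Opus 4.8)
The plan is to show that every Picard iterate $Y^n_t$ from the construction in the proof of \Cref{prop:strongSolutionRegular} lies in $\Dbb^{1,2}(\Hcal)$ with a Malliavin norm bounded uniformly in $n$, and then to invoke the closability of the Malliavin derivative to pass this property to the $L^2$-limit $X_t = \lim_n Y^n_t$. Recall that $Y^0 := x + \Bb$ and $Y^n_t = x + \int_0^t b(s, Y^{n-1}_s)\, ds + \Bb_t$, and that the estimate $\EHp{Y^{n+1}_t - Y^n_t} \leq \tfrac{A^{n+1}}{(n+1)!} t^{n+1}$ obtained there already gives, for each fixed $t$, convergence $Y^n_t \to X_t$ in $L^2(\Omega; \Hcal)$.

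For the base case I would compute the Malliavin derivative of the noise directly. Since $\Bb_t = \sum_{k\geq 1} \lambda_k B^{H_k}_t e_k$ with $B^{H_k}_t = \int_0^t K_{H_k}(t,s)\, dW^{(k)}_s$ a Wiener integral, one has $D^m_r \Bb^{(k)}_t = \lambda_k K_{H_k}(t,r)\mathbbm{1}_{[0,t]}(r)$ if $m=k$ and $0$ otherwise. Using \eqref{eq:CovarianceFunctionR} this yields
\[
	\ND{\Bb_t} = \sum_{m\geq 1} \int_0^T \EH{D^m_r \Bb_t}\, dr = \sum_{m\geq 1} \lambda_m^2 \int_0^t K_{H_m}(t,r)^2\, dr = \sum_{m\geq 1} \lambda_m^2\, t^{2H_m},
\]
which is finite and bounded over $t\in[0,T]$ by a constant $C_0^2$, since $\lambda \in \ell^2$ and $t^{2H_m} \leq T$. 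As the constant $x$ has vanishing derivative, $Y^0_t \in \Dbb^{1,2}(\Hcal)$ with $\NDO{Y^0_t} \leq C_0$.

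For the inductive step, assume $Y^{n-1}_s \in \Dbb^{1,2}(\Hcal)$ for a.e.\ $s$ with $\int_0^T \NDO{Y^{n-1}_s}^2\, ds < \infty$. Since $b \in \Lfrak_0([0,T]\times\Hcal;\Hcal)$, for each fixed $s$ the map $b(s,\cdot)$ belongs to $\Lcal_0(\Hcal)$ with Lipschitz sequences $L, M \in \ell^2$ that may be chosen independently of $s$; \Cref{lem:chainRule} then gives $b(s,Y^{n-1}_s) \in \Dbb^{1,2}(\Hcal)$ together with the bound $\NDO{b(s,Y^{n-1}_s)} \leq \|L\|_{\ell^2}\|M\|_{\ell^2}\, \NDO{Y^{n-1}_s}$. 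I would next commute the Malliavin derivative with the Lebesgue integral in $s$ (justified because $s\mapsto b(s,Y^{n-1}_s)$ is bounded in $L^2(\Omega;\Hcal)$ by boundedness of $b$ and satisfies $\int_0^T \NDO{b(s,Y^{n-1}_s)}^2\, ds < \infty$) to obtain $D^m_r Y^n_t = \int_0^t D^m_r b(s,Y^{n-1}_s)\, ds + D^m_r \Bb_t$. Taking the $\Dbb^{1,2}(\Hcal)$-norm, which is an $L^2$-norm in the variables $(m,r,\omega,k)$, and applying Minkowski's integral inequality yields the recursion
\[
	\NDO{Y^n_t} \leq \|L\|_{\ell^2}\|M\|_{\ell^2} \int_0^t \NDO{Y^{n-1}_s}\, ds + C_0 .
\]
Writing $\alpha := \|L\|_{\ell^2}\|M\|_{\ell^2}$ and iterating gives $\NDO{Y^n_t} \leq C_0 \sum_{j=0}^n \tfrac{(\alpha t)^j}{j!} \leq C_0 e^{\alpha T}$, so that $\sup_{n}\sup_{t\in[0,T]} \NDO{Y^n_t} < \infty$.

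Finally, fixing $t$ and combining the uniform bound $\sup_n \NDO{Y^n_t} < \infty$ with the convergence $Y^n_t \to X_t$ in $L^2(\Omega;\Hcal)$, the closability of $D$ (\cite[Lemma 1.2.3]{Nualart_MalliavinCalculus}) yields $X_t \in \Dbb^{1,2}(\Hcal)$, which is the assertion. I expect the main difficulty to be the rigorous justification of the commutation between $D$ and the time integral and, above all, the \emph{uniform-in-$n$} control: \Cref{lem:chainRule} is tailored precisely so that the constant $\|L\|_{\ell^2}\|M\|_{\ell^2}$ depends neither on $n$ nor on the growing number of active dimensions, which is exactly what makes the Gr\"onwall-type iteration close and prevents the Malliavin norms from blowing up as $n\to\infty$.
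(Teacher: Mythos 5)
Your proposal is correct and follows essentially the same route as the paper's own proof: induction over the Picard iterates, the explicit computation $\ND{Y_t^0}=\sum_{m\geq 1}\lambda_m^2 t^{2H_m}$ for the base case, the chain rule of \Cref{lem:chainRule} with the $n$-independent constant $\Vert L\Vert_{\ell^2}\Vert M\Vert_{\ell^2}$ to close the Gr\"onwall-type recursion, and closability of $D$ to pass to the limit $X_t$. No substantive differences.
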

\begin{proof}
	Recall the Picard iteration defined in \eqref{eq:sequencePicard}
	\begin{align}\label{eq:PicardMalliavinProof}
		Y_t^n = x + \int_0^t b\left(s,Y_s^{n-1}\right) ds + \Bb_t, ~ t\in[0,T], ~ n\geq 1,
	\end{align}
	and $Y^0 = x + \Bb$. We denote the $k$-th dimension of the infinite dimensional system \eqref{eq:PicardMalliavinProof} by $Y^{n,(k)} := \langle Y^n, e_k \rangle_\Hcal$.\par 
	Using the Picard iteration \eqref{eq:PicardMalliavinProof}, we show that for every step $n\geq 0$ the process $Y^n$ is Malliavin differentiable. We prove this using induction. For $n=0$ we have that for all $t\in[0,T]$ using \eqref{eq:CovarianceFunctionR}
	\begin{align*}
		\ND{Y_t^0} &= \sum_{m\geq 1} \int_0^T \EH{D_s^m Y_t^0} ds \notag\\
		&= \sum_{m \geq 1} \int_0^T \EH{\sum_{k\geq 1} \lambda_k D_s^m B_t^{H_k} e_k} ds \notag \\
		&= \sum_{m\geq 1} \int_0^T \EH{ \lambda_m D_s^m B_t^{H_m} e_m} ds \notag\\
		&= \sum_{m\geq 1} \int_0^T \lambda_m^2 K^2_{H_m}(t,s) ds \notag \\
		&= \sum_{m\geq 1} \lambda_m^2 R_{H_m}(t,t) = \sum_{m\geq 1} \lambda_m^2 t^{2H_m} < \infty.
	\end{align*}
	 Now suppose that $\NDO{Y_t^n} < \infty$ for $n\geq 0$. Due to \Cref{lem:chainRule} $b(t,Y_t^n)$ is in $\Dbb^{1,2}(\Hcal)$ and we have for every $t \in [0,T]$ that
	\begin{align*}
		\NDO{b(t,Y_t^n)} \leq \Vert L \Vert_{\ell^2} \cdot \Vert M \Vert_{\ell^2} \cdot \NDO{Y_t^n} < \infty,
	\end{align*}
	for some $L,M \in \ell^2$ independent of $n\geq 0$. Moreover, $\int_0^T b(r,Y_r^n) dr$ is Malliavin differentiable admitting for all $0 \leq s \leq T$ the representation $$D_s^m \left( \int_0^T b(r,Y_r^n) dr \right) = \int_s^T D_s^m b(r, Y_r^n) dr.$$ Thus, we get for $Y^{n+1}$ that
	\begin{align*}
		&\NDO{Y_t^{n+1}} = \NDO{ \left( \int_0^T b(s,Y_s^n) ds + Y_t^0 \right)} \\
		&\quad \leq \int_0^T \NDO{ b(s,Y_s^n) } ds + \NDO{Y_t^0} \\
		&\quad \leq \Vert L \Vert_{\ell^2} \cdot \Vert M \Vert_{\ell^2} \cdot \int_0^T \NDO{Y_s^n} ds + \NDO{Y_t^0} < \infty.
	\end{align*}
	Hence, $Y^{n+1}$ is Malliavin differentiable in the sense of \Cref{def:MalliavinDerivative}. Moreover, we can find a positive constant $A$ depending on $L, M, \lambda$ and $T$ such that
	\begin{align*}
		\NDO{Y_t^n} \leq \sum_{k=0}^n \frac{A^{k+1}}{k!} t^k \leq A\cdot e^{At}.
	\end{align*}
	Consequently, $\ND{Y_t^n}$ is uniformly bounded in $n\geq 0$ and therefore, since $Y^n \rightarrow X$ in $L^2([0,T]\times \Omega)$ and the Malliavin derivative is a closable operator, also $X$ is Malliavin differentiable in the sense of \Cref{def:MalliavinDerivative}.
\end{proof}

Let us finally put the previous results together and show that SDE \eqref{eq:approximationSDE} has a unique Malliavin differentiable strong solution.

\begin{corollary}\label{cor:representationMalliavinRegular}
	Let $b^{d, \varepsilon}: [0,T] \times \Hcal \to \Hcal$ be defined as in \eqref{eq:approximationDrift}. Then, SDE \eqref{eq:approximationSDE} has a unique strong solution $\left( X_t^{d,\varepsilon} \right)_{t\in[0,T]}$ which is Malliavin differentiable. Furthermore, the Malliavin derivative $D_s^m X_t^{d,\varepsilon}$ has for $0 \leq s < t \leq T$ a.s. the representation
	\begin{align}\label{eq:MalliavinDerivativeRepresentationRegular}
		D_s^m X_t^{d,\varepsilon} &= \lambda_m K_{H_m}(t,s) e_m \\
		&\qquad + \lambda_m \sum_{n\geq 1} \int_{\Delta_{s,t}^n} K_{H_m}(u_1,s) \sum_{\eta_0, \dots \eta_{n-1}=1}^d \left( \prod_{j=1}^n \partial_{\eta_j} \btilde_{\eta_{j-1}}^{d,\varepsilon}\left( u_j, \tau X_{u_j}^{d,\varepsilon} \right) \right) e_{\eta_0} du, \notag
	\end{align}
	where $\eta_n = m$ and $\btilde^{d,\varepsilon}: [0,T] \times \Rbb^d \to \Rbb^d$ is defined as in \eqref{eq:approximationDriftR}.
\end{corollary}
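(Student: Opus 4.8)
The plan is to deduce existence, uniqueness and Malliavin differentiability from the general results already established, and only then to derive the representation \eqref{eq:MalliavinDerivativeRepresentationRegular} by linearising \eqref{eq:approximationSDE} with the Malliavin derivative. First I would check that $b^{d,\varepsilon}\in\Lfrak_0([0,T]\times\Hcal;\Hcal)$. It has already been noted that $b^{d,\varepsilon}\in\Bfrak([0,T]\times\Hcal;\Hcal)$, so it remains to verify that $b^{d,\varepsilon}(t,\cdot)\in\Lcal_0(\Hcal)$ uniformly in $t$. Since $\btilde^{d,\varepsilon}(t,\cdot)=\btilde^{d}(t,\cdot)\ast\varphi_\varepsilon$, differentiation falls on the mollifier, $\nabla\btilde^{d,\varepsilon}(t,\cdot)=\btilde^{d}(t,\cdot)\ast\nabla\varphi_\varepsilon$, whence $\|\nabla\btilde^{d,\varepsilon}(t,\cdot)\|_\infty\leq\|\btilde^{d}(t,\cdot)\|_\infty\,\|\nabla\varphi_\varepsilon\|_{L^1}=:C_{d,\varepsilon}<\infty$, uniformly in $t\in[0,T]$ by the bound $\sup_{t,y}|b_k(t,y)|\leq C_k\lambda_k$. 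As $b^{d,\varepsilon}$ depends on its spatial argument only through the first $d$ coordinates and vanishes in all dimensions $k>d$, the estimate \eqref{eq:vanishingLipschitz} holds with the finitely supported sequences $L_k:=C_{d,\varepsilon}\,\mathbbm{1}_{\{k\leq d\}}$ and $M_i:=\mathbbm{1}_{\{i\leq d\}}$, both in $\ell^2$. Hence $b^{d,\varepsilon}\in\Lfrak_0([0,T]\times\Hcal;\Hcal)$, and \Cref{prop:strongSolutionRegular} provides the pathwise unique strong solution $(X_t^{d,\varepsilon})_{t\in[0,T]}$, while \Cref{prop:MalliavinRegular} shows that it lies in $\Dbb^{1,2}(\Hcal)$ for each $t$.

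To obtain the representation I would apply $D_s^m$ to the integral equation \eqref{eq:approximationSDE}. Using $D_s^m\Bb_t=\lambda_m K_{H_m}(t,s)e_m$, the fact that $D_s^m X_u^{d,\varepsilon}=0$ for $u<s$ by adaptedness, the commutation of $D_s^m$ with $\int_0^t(\cdot)\,du$, and the chain rule of \Cref{lem:chainRule} applied to the smooth composition $b^{d,\varepsilon}(u,X_u^{d,\varepsilon})$, one arrives at the random linear Volterra equation
\[
D_s^m X_t^{d,\varepsilon}=\lambda_m K_{H_m}(t,s)e_m+\int_s^t \nabla b^{d,\varepsilon}\!\left(u,X_u^{d,\varepsilon}\right)D_s^m X_u^{d,\varepsilon}\,du,\qquad 0\le s<t\le T,
\]
where, componentwise in the basis $\{e_k\}$, $\bigl(\nabla b^{d,\varepsilon}(u,X_u^{d,\varepsilon})v\bigr)^{(\eta_0)}=\sum_{\eta_1=1}^d \partial_{\eta_1}\btilde_{\eta_0}^{d,\varepsilon}\!\left(u,\tau X_u^{d,\varepsilon}\right)v^{(\eta_1)}$ and only the first $d$ dimensions enter.

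The last step is to solve this equation explicitly by Picard iteration, i.e. by expanding the resolvent of the Volterra operator into its Neumann series. The $n$-th iterate contributes an integral over the ordered region $s<u_1<\dots<u_n<t$, that is over $\Delta_{s,t}^n$, in which the inhomogeneity $\lambda_m K_{H_m}(\cdot,s)e_m$ is evaluated at the smallest time and each iteration supplies one Jacobian factor; collecting the matrix product componentwise over the internal indices $\eta_1,\dots,\eta_{n-1}$, with $\eta_n=m$ and output direction $e_{\eta_0}$, produces, after the appropriate relabelling of the integration variables along the simplex, precisely the product $\prod_{j=1}^n\partial_{\eta_j}\btilde_{\eta_{j-1}}^{d,\varepsilon}(u_j,\tau X_{u_j}^{d,\varepsilon})$ of \eqref{eq:MalliavinDerivativeRepresentationRegular}. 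Convergence of the series is here comparatively mild: each Jacobian entry is bounded by $C_{d,\varepsilon}$, the internal sums cost at most a factor $d^{\,n-1}$, and integrating out $u_2,\dots,u_n$ the $\Hcal$-norm of the $n$-th term is dominated by
\[
\lambda_m\,(C_{d,\varepsilon}d)^{n}\int_{\Delta_{s,t}^n}\!\bigl|K_{H_m}(u_1,s)\bigr|\,du\le \lambda_m\,(C_{d,\varepsilon}d)^{n}\,\frac{(t-s)^{n-1}}{(n-1)!}\int_s^t\!\bigl|K_{H_m}(u_1,s)\bigr|\,du_1 ,
\]
and $\int_s^t|K_{H_m}(u_1,s)|\,du_1<\infty$ since $K_{H_m}(\cdot,s)\in L^1([s,t])$. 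The factorial decay from the simplex volume therefore dominates, so the series converges absolutely, both pathwise and in $L^2(\Omega;\Hcal)$; its sum solves the Volterra equation and hence, by uniqueness of solutions to that equation, equals $D_s^m X_t^{d,\varepsilon}$.

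The main obstacle is not analytic but combinatorial: the careful bookkeeping needed to match the ordered matrix product of the iterated Jacobians arising from the Neumann series with the multi-index product indexed by $\eta_0,\dots,\eta_n=m$ over $\Delta_{s,t}^n$ in \eqref{eq:MalliavinDerivativeRepresentationRegular}, together with the rigorous justification of interchanging $D_s^m$ with the time integral and of applying \Cref{lem:chainRule} to the composition $b^{d,\varepsilon}(u,X_u^{d,\varepsilon})$. The uniform-in-$(d,\varepsilon)$ control of this representation, which is what makes it useful in the sequel, is a separate matter postponed to the following subsections and is not needed for the corollary itself.
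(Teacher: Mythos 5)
Your proposal is correct and follows essentially the same route as the paper: verify $b^{d,\varepsilon}\in\Lfrak_0([0,T]\times\Hcal;\Hcal)$ (the paper via the mean-value theorem on $\btilde_k^{d,\varepsilon}$, you via the equivalent Young-type bound $\|\btilde^{d}(t,\cdot)\ast\nabla\varphi_\varepsilon\|_\infty\leq\|\btilde^{d}(t,\cdot)\|_\infty\|\nabla\varphi_\varepsilon\|_{L^1}$), invoke \Cref{prop:strongSolutionRegular} and \Cref{prop:MalliavinRegular}, then apply $D_s^m$ to the equation and iterate the resulting linear Volterra equation into a Neumann series over simplices, expanding the Jacobian products componentwise exactly as in \eqref{eq:representationNabla}. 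Your explicit factorial estimate justifying convergence of the iterated series is a detail the paper leaves implicit, but it does not change the argument.
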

\begin{proof}
	If the drift function $b^{d,\varepsilon}$ is in the class $\Lfrak_0([0,T]\times \Hcal, \Hcal)$, then SDE \eqref{eq:approximationSDE} has a unique Malliavin differentiable strong solution by \Cref{prop:strongSolutionRegular} and \Cref{prop:MalliavinRegular}. Thus we merely need to show that $b^{d,\varepsilon}(t,\cdot) \in \Lcal_0(\Hcal)$ uniformly in $t\in[0,T]$. Let $t\in [0,T]$ and $y,z \in \Hcal$. Then, using the triangular inequality and the mean-value theorem we get for all $1 \leq k \leq d$ that
	\begin{align*}
		&\left\vert \left\langle b^{d,\varepsilon}(t,y) - b^{d,\varepsilon}(t,z), e_k \right\rangle_\Hcal \right\vert = \left\vert b_k^{d,\varepsilon}(t,y) - b_k^{d,\varepsilon}(t,z) \right\vert = \left\vert \btilde_k^{d,\varepsilon}(t,\tau^{-1} y) - \btilde_k^{d,\varepsilon}(t,\tau^{-1} z) \right\vert \\
		&\quad \leq \sum_{i=1}^d \left\vert \btilde_k^{d,\varepsilon}\left(t, \sum_{j=1}^{i-1} z_j \etilde_j + \sum_{j=i}^d y_j \etilde_j \right) - \btilde_k^{d,\varepsilon}\left(t,\sum_{j=1}^i z_j \etilde_j + \sum_{j=i+1}^d y_j \etilde_j  \right) \right\vert \\
		&\quad \leq \sum_{i=1}^d \sup_{\xi \in \Rbb^d} \vert \partial_i \btilde_k^{d,\varepsilon}(t,\xi) \vert \vert y_i - z_i \vert = \sum_{i=1}^d \sup_{\xi \in \Rbb^d} \vert \partial_i \btilde_k^{d,\varepsilon}(t,\xi)\vert \vert \langle y - z, e_i \rangle \vert.
	\end{align*}
	Note that we can find sequences $\lbrace L_k \rbrace_{1\leq k \leq d}$ and $\lbrace M_i \rbrace_{1\leq i \leq d}$ such that for all $1\leq k,i \leq d$ we have $\sup_{\xi \in \Rbb^d} \vert \partial_i \btilde_k^{d,\varepsilon}(t,\xi) \vert \leq L_k \cdot M_i$. Hence, $b^{d,\varepsilon} \in \Lfrak_0([0,T]\times \Hcal; \Hcal)$. \par
	It is left to show that representation \eqref{eq:MalliavinDerivativeRepresentationRegular} holds. First note that due to the definition of the Malliavin derivative of a random variable $Y$ with values in $\Hcal$, see \Cref{def:MalliavinDerivative}, we have that $D^m(\tau Y) = \tau D^m Y$, for all $m\geq 1$. Consequently, we get for $0\leq s < t \leq T$ using \Cref{lem:chainRule} that the Malliavin derivative $D_s^m X_t^{d,\varepsilon}$ can be written as
	\begin{align*}
		D_s^m X_t^{d, \varepsilon} &= \tau^{-1} D_s^m \Xtilde_t^{d,\varepsilon} = \int_s^t \nabla \btilde^{d,\varepsilon}\left(u, \Xtilde_u^{d,\varepsilon}\right) D_s^m X_u^{d,\varepsilon} du  + D_s^m \Bb_t.
	\end{align*}
	Iterating this step yields
	\begin{align*}
		D_s^m X_t^{d, \varepsilon} &= \sum_{n\geq 1} \int_{\Delta_{s,t}^n} \left( \prod_{j=1}^n \nabla \btilde^{d,\varepsilon}\left(u_j, \Xtilde_{u_j}^{d,\varepsilon} \right) \right) \lambda_m K_{H_m}(u_1,s) e_m du + \lambda_m K_{H_m}(t,s) e_m.
	\end{align*}
	Further note that
	\begin{align*}
		\nabla \btilde^{d,\varepsilon}\left(u_j, \Xtilde_{u_j}^{d,\varepsilon}\right) &= \nabla \left( \sum_{k=1}^d \btilde_k^{d,\varepsilon} \left( u_j, \Xtilde_{u_j}^{d,\varepsilon} \right) e_k \right) = \sum_{l=1}^d \sum_{k=1}^d \partial_l \btilde_k^{d,\varepsilon}\left( u_j, \Xtilde_{u_j}^{d,\varepsilon} \right) e_k e_l^\top.
	\end{align*}
	Thus, we get for every $n\geq 1$
	\begin{align}\label{eq:representationNabla}
		&\prod_{j=1}^n \nabla \btilde^{d,\varepsilon}\left(u_j, \Xtilde_{u_j}^{d,\varepsilon}\right) = \sum_{l=1}^d \sum_{k=1}^d \left( \sum_{\eta_1, \dots \eta_{n-1}=1}^d \prod_{j=1}^{n} \partial_{\eta_j} \btilde_{\eta_{j-1}}^{d,\varepsilon}\left( u_j, \Xtilde_{u_j}^{d,\varepsilon} \right) \right)e_k e_l^\top,
	\end{align}
	where $\eta_0 = k$ and $\eta_n = l$ and consequently, representation \eqref{eq:MalliavinDerivativeRepresentationRegular} holds.
\end{proof}

\subsection{Weak convergence}
	In this step we show that the sequence of unique strong solutions $\lbrace X^{d,\varepsilon} \rbrace_{d\geq 1, \varepsilon>0}$ of the approximating SDEs \eqref{eq:approximationSDE} converge weakly to the weak solution of \eqref{eq:MainSDE} where $b\in \Bfrak([0,T]\times \Hcal;\Hcal)$.

\begin{lemma}\label{lem:weakConvergence}
	Let $b \in \Bfrak([0,T]\times \Hcal; \Hcal)$. Furthermore, let $(X_t)_{t\in [0,T]}$ be the weak solution of \eqref{eq:MainSDE}. Consider the approximating sequence of strong solutions $\lbrace (X_t^{d,\varepsilon})_{t\in[0,T]} \rbrace_{d\geq 1, \varepsilon>0}$ of SDEs \eqref{eq:approximationSDE}, where $b^{d,\varepsilon}:[0,T] \times \Hcal \to \Hcal$ is defined as in \eqref{eq:approximationDrift}. Then, for every $t\in[0,T]$ and for any bounded continuous function $\phi: \Hcal \to \Rbb$
	\begin{align*}
		\phi(X_t^{d,\varepsilon}) \xrightarrow[d\to \infty, \varepsilon \to 0]{} \EW{\phi(X_t) \big\vert \Fcal_t^W},
	\end{align*}
	weakly in $L^2(\Omega,\Fcal_t^W)$.
\end{lemma}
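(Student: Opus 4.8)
The plan is to prove the weak convergence by testing against a total family of functionals and reducing every expectation, by Girsanov's theorem, to a single reference frame in which the driving noise is an \emph{undisturbed} weighted cylindrical fractional Brownian motion; the drift then enters only through a Radon--Nikodym density and the convergence $b^{d,\varepsilon}\to b$ can be used directly. Since $\phi$ is bounded, $\EW{\vert\phi(X_t^{d,\varepsilon})\vert^2}\le\Vert\phi\Vert_\infty^2$ uniformly in $d,\varepsilon$, so $\{\phi(X_t^{d,\varepsilon})\}$ is bounded in $L^2(\Omega,\Fcal_t^W)$ and it suffices to verify $\EW{\phi(X_t^{d,\varepsilon})\,G}\to\EW{\phi(X_t)\,G}$ for $G$ ranging over a total subset of $L^2(\Omega,\Fcal_t^W)$. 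I would take $G$ to be bounded continuous cylindrical functionals of the noise $\Bb$, which is legitimate since $\Fcal_t^W=\Fcal_t^{\Bb}$ (the filtrations coincide by \eqref{eq:fBmAsWienerIntegral}) and such functionals are total; the uniform $L^2$-bound then upgrades convergence on this set to genuine weak convergence.

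Each $X^{d,\varepsilon}$ is a strong solution by \Cref{cor:representationMalliavinRegular}, i.e.\ a functional of $\Bb$. Applying \Cref{thm:Girsanov} and \Cref{rem:GirsanovInfty} componentwise with the shift $s\mapsto\sqrt{Q}^{-1}b^{d,\varepsilon}(s,X_s^{d,\varepsilon})$, I would pass to an equivalent measure under which $X^{d,\varepsilon}$ itself becomes $x+\Bb$, a shifted weighted cylindrical fractional Brownian motion. Transporting the density into this frame yields a representation of the form
\begin{align*}
	\EW{\phi(X_t^{d,\varepsilon})\,G} = \EW{\phi(x+\Bb_t)\, G\Big(\Bb_\cdot - \int_0^\cdot b^{d,\varepsilon}(s,x+\Bb_s)\,ds\Big)\,\Ecal_T^{d,\varepsilon}},
\end{align*}
where $\Ecal_T^{d,\varepsilon}$ is the Girsanov density built from $b^{d,\varepsilon}$ along the \emph{pure noise} $x+\Bb$, exactly of the type appearing in \Cref{prop:weakSolution}. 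Running the same construction with $b$ in place of $b^{d,\varepsilon}$ --- which is precisely the construction of the weak solution in \Cref{prop:weakSolution} --- gives the analogous representation of $\EW{\phi(X_t)\,G}$ with density $\Ecal_T$, uniqueness in law ensuring that this is the correct target independently of the chosen weak solution.

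It then remains to let $d\to\infty$, $\varepsilon\to0$ inside the expectation. The convergence $b^{d,\varepsilon}\to b$ established before the lemma (pointwise and in $L^p([0,T]\times\Omega;\Hcal)$) yields convergence in probability of both the shifted argument $\Bb_\cdot-\int_0^\cdot b^{d,\varepsilon}(s,x+\Bb_s)\,ds$ of $G$ and the exponent of $\Ecal_T^{d,\varepsilon}$ to those associated with $b$; continuity and boundedness of $\phi$ and $G$ dispose of these factors. The decisive point is $\Ecal_T^{d,\varepsilon}\to\Ecal_T$ in $L^1$, which I would obtain from convergence in probability together with uniform integrability supplied by a Novikov-type exponential bound \emph{uniform} in $d,\varepsilon$ --- namely the estimate $\exp\{CT^2\sum_k C_k^2\}<\infty$ from the proof of \Cref{prop:weakSolution}, valid because every $b^{d,\varepsilon}$ lies in $\Bfrak([0,T]\times\Hcal;\Hcal)$ with the same admissible constants $C\in\ell^1$ (so $\sum_k C_k^2<\infty$). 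A Vitali argument then gives $L^1$-convergence of the entire integrand and hence the claim.

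The delicate part is this last step, the $L^1$-convergence of the densities $\Ecal_T^{d,\varepsilon}$. The change of measure for fractional Brownian motion is nonlocal in time through the operator $K_{H_k}^{-1}$, which is a fractional derivative, so the convergence $b^{d,\varepsilon}\to b$ must be pushed through $K_{H_k}^{-1}$ and then summed over the infinitely many coordinates $k\geq1$. Controlling this sum requires the $\ell^1$-summability built into the definition of $\Bfrak$ in \eqref{eq:conditionDrift} together with the uniform exponential integrability above, and it is here that the structural hypotheses on the Hurst sequence $H$ and the condition $\lambda/\sqrt{H}\in\ell^1$ enter in an essential way.
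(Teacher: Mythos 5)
Your proposal is correct and follows essentially the same route as the paper: both reduce the claim, via Girsanov's theorem, to convergence of the Girsanov densities in the driftless frame $x+\Bb$, controlled by the uniform bound on $K_{H_k}^{-1}$ applied to the drift differences together with the $\ell^1$-summability of $C$. The only differences are technical choices: the paper tests against Wick exponentials (the Wiener transform) rather than bounded cylindrical functionals of $\Bb$, and replaces your Vitali/uniform-integrability step by the elementary inequality $\left\vert e^x - e^y \right\vert \leq \left\vert x-y\right\vert\left(e^x+e^y\right)$ followed by dominated convergence.
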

\begin{proof}
	Using the Wiener transform
	\begin{align*}
		\Wcal(Z)(f) := \EW{Z\Ecal\left(\int_0^T \langle f(s), dW_s \rangle_\Hcal \right)},
	\end{align*}
	of some random variable $Z\in L^2(\Omega,\Fcal_T^W)$ in $f \in L^2([0,T];\Hcal)$, it suffices to show for any arbitrary $f\in L^2([0,T];\Hcal)$ that
	\begin{align*}
		\Wcal(\phi(X_t^{d,\varepsilon}))(f) \xrightarrow[d\to \infty, \varepsilon \to 0]{} \Wcal\left(\EW{\phi(X_t) \big\vert \Fcal_t^W}\right)(f).
	\end{align*}
	So, let $f \in L^2([0,T];\Hcal)$ be arbitrary, then by using Girsanov's theorem we get
	\begin{align*}
		&\left\vert \Wcal(\phi(X_t^{d,\varepsilon}))(f) - \Wcal\left(\EW{\phi(X_t) \big\vert \Fcal_t^W}\right)(f) \right\vert \\
		&\quad = \left\vert \EW{\phi(\Bb_t^x) \Ecal\left( \int_0^T \left\langle f(s) + \left( \sum_{k=1}^d K_{H_k}^{-1} \left(\int_0^\cdot b_k^{d,\varepsilon}(u,\Bb_u^x) \lambda_k^{-1} du \right)(s) e_k \right), dW_s\right\rangle_\Hcal \right)} \right.\\
		&\qquad - \left. \EW{\phi(\Bb_t^x) \Ecal\left( \int_0^T \left\langle f(s) + \left( \sum_{k\geq 1} K_{H_k}^{-1} \left(\int_0^\cdot b_k(u,\Bb_u^x) \lambda_k^{-1} du \right)(s) e_k \right), dW_s \right\rangle_\Hcal \right)} \right\vert \\
		&\quad \lesssim \Ebb \left[ \left\vert \Ecal\left( \int_0^T \left\langle f(s) + \left( \sum_{k=1}^d K_{H_k}^{-1} \left(\int_0^\cdot b_k^{d,\varepsilon}(u,\Bb_u^x) \lambda_k^{-1} du \right)(s) e_k \right), dW_s \right\rangle_\Hcal \right) \right. \right.\\
		&\qquad - \left. \left. \Ecal\left( \int_0^T \left\langle f(s) + \left( \sum_{k\geq 1} K_{H_k}^{-1} \left(\int_0^\cdot b_k(u,\Bb_u^x) \lambda_k^{-1} du \right)(s) e_k \right), dW_s \right\rangle_\Hcal \right) \right\vert \right].
	\end{align*}
	Using the inequality 
	\begin{align*}
		\left\vert e^x - e^y \right\vert \leq \left\vert x - y \right\vert \left( e^x + e^y \right) \quad \forall x,y \in \Rbb,
	\end{align*}
	we get
	\begin{align*}
		&\left\vert \Wcal(\phi(X_t^{d,\varepsilon}))(f) - \Wcal\left(\EW{\phi(X_t) \big\vert \Fcal_t^W}\right)(f) \right\vert \\
		&\quad \lesssim \Ebb \left[ \left\vert \int_0^T \left\langle f(s) + \left( \sum_{k=1}^d K_{H_k}^{-1} \left(\int_0^\cdot b_k^{d,\varepsilon}(u,\Bb_u^x) \lambda_k^{-1} du \right)(s) e_k \right), dW_s \right\rangle_\Hcal \right. \right.\\
		&\qquad - \left. \left. \int_0^T \left\langle f(s) + \left( \sum_{k\geq 1} K_{H_k}^{-1} \left(\int_0^\cdot b_k(u,\Bb_u^x) \lambda_k^{-1} du \right)(s) e_k \right), dW_s \right\rangle_\Hcal \right\vert \right] \\
		&\qquad + \Ebb \left[ \left\vert \int_0^T \left\langle \left( f(s) + \left( \sum_{k=1}^d K_{H_k}^{-1} \left(\int_0^\cdot b_k^{d,\varepsilon}(u,\Bb_u^x) \lambda_k^{-1} du \right)(s) e_k \right)\right)^2, ds \right\rangle_\Hcal \right. \right.\\
		&\qquad - \left. \left. \int_0^T \left\langle \left( f(s) + \left( \sum_{k\geq 1} K_{H_k}^{-1} \left(\int_0^\cdot b_k(u,\Bb_u^x) \lambda_k^{-1} du \right)(s) e_k \right)\right)^2, ds \right\rangle_\Hcal \right\vert \right] \\
		&\quad \leq \Ebb \left[ \left\vert \sum_{k=1}^d \int_0^T K_{H_k}^{-1} \left(\int_0^\cdot b_k^{d,\varepsilon}(u,\Bb_u^x) \lambda_k^{-1} - b_k(u,\Bb_u^x) \lambda_k^{-1} du \right)(s) dW_s^{(k)} \right. \right.\\
		&\qquad - \left. \left. \sum_{k \geq d+1} \int_0^T K_{H_k}^{-1} \left(\int_0^\cdot b_k(u,\Bb_u^x) \lambda_k^{-1} du \right)(s) dW_s^{(k)} \right\vert \right] \\
		&\qquad + A_{d,\varepsilon}(f),
	\end{align*}
	where
	\begin{align*}
		A_{d,\varepsilon}(f) &:= \Ebb \left[ \left\vert \int_0^T \left\langle \left( f(s) + \left( \sum_{k=1}^d K_{H_k}^{-1} \left(\int_0^\cdot b_k^{d,\varepsilon}(u,\Bb_u^x) \lambda_k^{-1} du \right)(s) e_k \right)\right)^2, ds \right\rangle_\Hcal \right. \right.\\
		&\quad - \left. \left. \int_0^T \left\langle \left( f(s) + \left( \sum_{k\geq 1} K_{H_k}^{-1} \left(\int_0^\cdot b_k(u,\Bb_u^x) \lambda_k^{-1} du \right)(s) e_k \right)\right)^2, ds \right\rangle_\Hcal \right\vert \right].
	\end{align*}
	For every $k\geq 1$, we get with representation \eqref{eq:inverseKH} that
	\begin{align*}
		&\Kcal_{H_k}^{-1}(d,\varepsilon,s)  := K_{H_k}^{-1} \left(\int_0^\cdot b_k^{d,\varepsilon}(u,\Bb_u^x) \lambda_k^{-1} - b_k(u,\Bb_u^x) \lambda_k^{-1} du \right)(s) \\
		&\quad = s^{H_k-\frac{1}{2}} I_{0+}^{\frac{1}{2}-H_k} s^{\frac{1}{2}-H_k} \left( b_k^{d,\varepsilon}(s,\Bb_s^x) - b_k(s,\Bb_s^x) \right) \lambda_k^{-1} \\
		&\quad = \frac{\lambda_k^{-1}}{\Gamma\left( \frac{1}{2}-H_k \right)} \int_0^s \left(\frac{u}{s} \right)^{\frac{1}{2}-H_k} (s-u)^{-\frac{1}{2}-H_k} \left( b_k^{d,\varepsilon}(u,\Bb_u^x) - b_k(u,\Bb_u^x) \right) du,
	\end{align*}
	which is bounded by
	\begin{align*}
		\left\vert \Kcal_{H_k}^{-1}(d,\varepsilon,s)  \right\vert &\leq 2 \frac{C_k}{\Gamma\left( \frac{1}{2}-H_k \right)} \int_0^s \left(\frac{u}{s} \right)^{\frac{1}{2}-H_k} (s-u)^{-\frac{1}{2}-H_k} du \\
		&= 2 \frac{C_k}{\Gamma\left( \frac{1}{2}-H_k \right)} s^{\frac{1}{2}-H_k} \beta\left( \frac{3}{2} - H_k, \frac{1}{2}-H_k \right) \lesssim C_k.
	\end{align*}
	Consequently, we get for every $d\geq 1$ using the Burkholder-Davis-Gundy inequality that
	\begin{align*}
		\Eabs{\sum_{k=1}^d \int_0^T  \Kcal_{H_k}^{-1}(d,\varepsilon,s) dW_s^{(k)}} &\leq \sum_{k=1}^d \Ebb \left[ \int_0^T  \left\vert \Kcal_{H_k}^{-1}(d,\varepsilon,s) \right\vert^2 ds \right]^\frac{1}{2} \lesssim \sum_{k\geq 1} C_k < \infty.
	\end{align*}
	Hence, by dominated convergence
	\begin{align*}
		\lim_{d\to \infty} \lim_{\varepsilon \to 0} \Eabs{\sum_{k=1}^d \int_0^T  \Kcal_{H_k}^{-1}(d,\varepsilon,s) dW_s^{(k)}} = 0.
	\end{align*}
	Equivalently, we have
	\begin{align*}
		&\Eabs{\int_0^T \sum_{k\geq d+1} K_{H_k}^{-1} \left(\int_0^\cdot b_k(u,\Bb_u^x) \lambda_k^{-1} du \right)(s) dW_s^{(k)}} \lesssim \sum_{k\geq 1} C_k < \infty.
	\end{align*}
	Thus, again by dominated convergence
	\begin{align*}
		\lim_{d\to \infty} \lim_{\varepsilon \to 0} \Eabs{\int_0^T \sum_{k\geq d+1} K_{H_k}^{-1} \left(\int_0^\cdot b_k(u,\Bb_u^x) \lambda_k^{-1} du \right)(s) dW_s^{(k)}} = 0.
	\end{align*}
	Similarly, one can show that $A_{d,\varepsilon}(f)$ vanishes for every $f\in L^2([0,T];\Hcal)$ as $\varepsilon \to 0$ and $d\to \infty$. Consequently, $\phi(X_t^{d,\varepsilon}) \xrightarrow[d\to \infty, \varepsilon \to 0]{} \EW{\phi(X_t) \big\vert \Fcal_t^W}$ weakly in $L^2(\Omega,\Fcal_t^W)$.
\end{proof}

\subsection{Application of the compactness criterion}

\begin{theorem}\label{thm:compactnessApplication}
	The double-sequence $\left\lbrace X_t^{d,\varepsilon} \right\rbrace_{d\geq 1, \varepsilon>0}$ of strong solutions of SDE \eqref{eq:approximationSDE} is relatively compact in $L^2(\Omega,\Fcal_t^W)$.
\end{theorem}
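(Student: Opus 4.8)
The plan is to apply, for each fixed $t$, the infinite-dimensional version of the Da Prato--Malliavin--Nualart compactness criterion to the family $\{X_t^{d,\varepsilon}\}_{d\geq 1,\varepsilon>0}$. Recall that this criterion reduces relative compactness in $L^2(\Omega,\Fcal_t^W)$ to two uniform bounds: a uniform $L^2$-bound on the functionals themselves, and a uniform bound on their Malliavin derivatives in a fractional-Sobolev-type norm, i.e.\ control of both $\sum_{m\geq 1}\int_0^T \EH{D_s^m X_t^{d,\varepsilon}}\,ds$ and a H\"older-type modulus $\sum_{m\geq 1}\int_0^T\int_0^T \frac{\Ebb[\Vert D_{s'}^m X_t^{d,\varepsilon}-D_s^m X_t^{d,\varepsilon}\Vert_\Hcal^2]}{\vert s-s'\vert^{1+2\beta}}\,ds\,ds'$ for some $\beta>0$, all uniform in $d$ and $\varepsilon$. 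The $L^2$-bound is immediate: since $b^{d,\varepsilon}\in\Bfrak([0,T]\times\Hcal;\Hcal)$ inherits the uniform bound $\Vert b_k^{d,\varepsilon}\Vert_\infty\leq C_k\lambda_k$ with $C\in\ell^1$, the drift integral $\int_0^t b^{d,\varepsilon}(s,X_s^{d,\varepsilon})\,ds$ is bounded in $\Hcal$ uniformly in $d,\varepsilon$, and $\Bb_t\in L^2(\Omega;\Hcal)$; hence $\sup_{d,\varepsilon}\EH{X_t^{d,\varepsilon}}<\infty$.

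The substance of the proof lies in the Malliavin-derivative bounds, and here I would start from the representation \eqref{eq:MalliavinDerivativeRepresentationRegular}. Its first term $\lambda_m K_{H_m}(t,s)e_m$ is explicit and contributes a finite quantity controlled by $\sum_m\lambda_m^2 t^{2H_m}$ together with standard estimates on $K_{H_m}$, exactly as computed for $Y^0$ in the proof of \Cref{prop:MalliavinRegular}. The series term is the difficult part, because the derivatives $\partial_{\eta_j}\btilde^{d,\varepsilon}_{\eta_{j-1}}$ of the mollified drift blow up as $\varepsilon\to0$, so a naive termwise estimate fails. To circumvent this I would pass, via \Cref{thm:Girsanov} and \Cref{rem:GirsanovInfty}, to a measure under which $X^{d,\varepsilon}$ has the law of $\Bb^x=x+\Bb$, so that each factor $\partial_{\eta_j}\btilde^{d,\varepsilon}_{\eta_{j-1}}(u_j,\tau X_{u_j}^{d,\varepsilon})$ becomes a spatial derivative evaluated along the Gaussian path $\Bb^d$. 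Then the variational identity sketched in the introduction,
$$\int_{\Delta_{s,t}^n}\kappa(s)D^\alpha f(\Bb_s^d)\,ds=(-1)^{\vert\alpha\vert}\int_{\Rbb^{dn}} f(z)\,D^\alpha L_\kappa^n(t,z)\,dz,$$
transfers every spatial derivative off the drift and onto the simplex local time $L_\kappa^n$ of $\Bb^d$, whose spatial smoothness is quantified through the strong local non-determinism of \Cref{lem:localNonDeterminism}. The iterated integrals over $\Delta_{s,t}^n$ are organized by the shuffle identity \eqref{shuffleIntegral}.

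The final step is to sum the resulting termwise bounds into a convergent series, uniformly in $d,\varepsilon$. After the integration by parts, each term is estimated by Cauchy--Schwarz against the $L^2$-norm of a derivative of $L_\kappa^n$, which local non-determinism bounds in terms of the constants $\Kfrak_{H_k}$ and the Hurst parameters; the drift enters only through its integrability, controlled by the second condition in \eqref{eq:conditionDrift} with $D\in\ell^1$ (this is precisely why the operator $\Kcal$ from \eqref{eq:KOperator} appears in that hypothesis). Collecting the combinatorial factor from the sum over $\eta_0,\dots,\eta_{n-1}$ and the shuffle count against the decay produced by $\Kfrak_{H_k}$ and the $\lambda_k$, one obtains a series in $n$ whose convergence, together with summability over the dimensions $m$ and $k$, is exactly what the standing assumptions $\sum_k H_k<\frac16$ and $\sup_k H_k<\frac1{12}$ are designed to guarantee. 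The same local-time machinery applied to the increment $D_{s'}^m-D_s^m$ yields the H\"older modulus with some $\beta>0$.

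I expect this summability estimate to be the main obstacle: one must reconcile the growth in the number of terms appearing in the $n$-th iteration layer with the smallness of the Hurst parameters, and do so \emph{uniformly} in both the dimension $d$ and the mollification parameter $\varepsilon$, the whole point being that the explicit $\varepsilon$-dependence has been removed by the local-time integration by parts before any estimation takes place. Once these uniform bounds are established, the hypotheses of the compactness criterion are verified and relative compactness of $\{X_t^{d,\varepsilon}\}_{d\geq 1,\varepsilon>0}$ in $L^2(\Omega,\Fcal_t^W)$ follows.
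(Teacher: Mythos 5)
Your proposal follows essentially the same route as the paper: verify the three conditions of the Da Prato--Malliavin--Nualart criterion, with the $L^2$-bound immediate from boundedness of $b^{d,\varepsilon}$, and the Malliavin-derivative bounds obtained by combining the representation \eqref{eq:MalliavinDerivativeRepresentationRegular} with a Girsanov change of measure, the shuffle identity \eqref{shuffleIntegral}, the local-time integration by parts of \Cref{prop:mainestimate1} (resting on strong local non-determinism), and a Stirling-type summation in $n$ governed by the assumptions on $H$. The only point your sketch compresses is the explicit three-term decomposition $D_s^m X_t^{d,\varepsilon}-D_u^m X_t^{d,\varepsilon}=\lambda_m(\Ifrak_1+\Ifrak_2+\Ifrak_3)$ and the $m$-dependent choice of the weights $\gamma_m,\beta_m$ needed to sum over the noise directions, but these are executions of the strategy you describe rather than different ideas.
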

\begin{proof}
	We are aiming at applying the compactness criterion given in \Cref{thm:compactCriterion}. Therefore, let $0<\alpha_m<\beta_m<\frac{1}{2}$ and $\gamma_m>0$ for all $m\geq 1$ and define the sequence $\mu_{s,m}=2^{-i\alpha_m}\gamma_m$, if $s=2^{i}+j$, $i\geq 0$, $0\leq j\leq 2^{i},$ $m\geq 1$ where  $\mu_{s,m}\longrightarrow 0$ for $s,m \longrightarrow \infty $. We have to check that there exists a uniform constant $C$ such that for all $\lbrace X_t^{d,\varepsilon} \rbrace_{d\geq 1, \varepsilon>0}$
\begin{equation}\label{eq:ConvergenceThmL2}
	\left\Vert X_t^{d,\varepsilon}  \right\Vert_{L^{2}(\Omega;\Hcal)}\leq C,
\end{equation}
\begin{align*}
	\sum_{m\geq 1}\gamma_m^{-2} \left\Vert D^m X_t^{d,\varepsilon}  \right\Vert_{L^{2}(\Omega;L^{2}([0,T];\Hcal))}^{2} \leq C,
\end{align*}
and
\begin{equation}\label{eq:ConvergenceThmMalliavinDifference}
	\sum_{m\geq 1} \frac{1}{(1-2^{-2(\beta_m-\alpha_m)})\gamma_m^{2}} \int_{0}^T \int_{0}^T \frac{\left\Vert D_s^m X_t^{d,\varepsilon} - D_u^m X_t^{d,\varepsilon}  \right\Vert_{L^{2}(\Omega;\Hcal)}^{2}}{\left\vert s-u\right\vert^{1+2\beta_m}} ds du\leq C.
\end{equation}
	Note first that \eqref{eq:ConvergenceThmL2} is fulfilled due to the uniform boundedness of $\lbrace b^{d,\varepsilon} \rbrace_{d\geq 1, \varepsilon>0}$ and the definition of the process $\left( \Bb_t \right)_{t\in[0,T]}$, see \eqref{eq:Bb}. \par
	Next we show uniform boundedness of \eqref{eq:ConvergenceThmMalliavinDifference}. Note first that under the assumption $u\leq s$ we have
	\begin{align*}
		D_s^m X_t^{d,\varepsilon} &- D_u^m X_t^{d,\varepsilon} = \lambda_m \left( K_{H_m}(t,s) - K_{H_m}(t,u) \right) e_m \\
		&\quad + \int_s^t \nabla \btilde^{d,\varepsilon}(v, \Xtilde_v^{d,\varepsilon}) D_s^m X_v^{d,\varepsilon} dv - \int_u^t \nabla \btilde^{d,\varepsilon}(v, \Xtilde_v^{d,\varepsilon}) D_u^m X_v^{d,\varepsilon} dv \\
		&= \lambda_m \left( K_{H_m}(t,s) - K_{H_m}(t,u) \right) e_m - \int_u^s \nabla \btilde^{d,\varepsilon}(v, \Xtilde_v^{d,\varepsilon}) D_u^m X_v^{d,\varepsilon} dv \\
		&\quad + \int_s^t \nabla \btilde^{d,\varepsilon}(v, \Xtilde_v^{d,\varepsilon}) \left(D_s^m X_v^{d,\varepsilon} - D_u^m X_v^{d,\varepsilon}\right) dv \\
		&= \lambda_m \left( K_{H_m}(t,s) - K_{H_m}(t,u) \right) e_m - D_u^m X_s^{d,\varepsilon} + \lambda_m K_{H_m}(s,u) e_m \\
		&\quad + \int_s^t \nabla \btilde^{d,\varepsilon}(v, \Xtilde_v^{d,\varepsilon}) \left(D_s^m X_v^{d,\varepsilon} - D_u^m X_v^{d,\varepsilon}\right) dv.
	\end{align*}
	Using iteration we obtain the representation
	\begin{align*}
		D_s^m X_t^{d,\varepsilon} &- D_u^m X_t^{d,\varepsilon} = \lambda_m \left( K_{H_m}(t,s) - K_{H_m}(t,u) \right) e_m \\
		&\quad + \lambda_m \sum_{n\geq 1} \int_{\Delta_{s,t}^n} \prod_{j=1}^n \nabla \btilde^{d,\varepsilon}(v_j, \Xtilde_{v_j}^{d,\varepsilon}) \left( K_{H_m}(v_1,s) - K_{H_m}(v_1,u) \right) e_m dv \\
		&\quad + \left( \Id + \sum_{n\geq 1} \int_{\Delta_{s,t}^n} \prod_{j=1}^n \nabla \btilde^{d,\varepsilon}(v_j, \Xtilde_{v_j}^{d,\varepsilon}) dv \right) \left(\lambda_m K_{H_m}(s,u) e_m - D_u^m X_s^{d,\varepsilon}\right),
	\end{align*}
	where by \Cref{cor:representationMalliavinRegular}
	\begin{align*}
		&\left(\lambda_m K_{H_m}(s,u) e_m - D_u^m X_s^{d,\varepsilon}\right) = \\
		&\qquad - \lambda_m \sum_{n\geq 1} \int_{\Delta_{u,s}^n}  K_{H_m}(v_1,u) \sum_{\eta_0, \dots, \eta_{n-1}=1}^d \prod_{j=1}^n \partial_{\eta_j} \btilde_{\eta_{j-1}}^{d,\varepsilon}(v_j, \Xtilde_{v_j}^{d,\varepsilon}) e_{\eta_0} dv.
	\end{align*}
	Consequently, we get due to \eqref{eq:representationNabla} that
	\begin{align*}
		D_s^m X_t^{d,\varepsilon} &- D_u^m X_t^{d,\varepsilon} = \lambda_m \left( \Ifrak_1 + \Ifrak_2 + \Ifrak_3 \right),
	\end{align*}
	where
	\begin{align*}
		\Ifrak_1 &:= \left( K_{H_m}(t,s) - K_{H_m}(t,u) \right) e_m, \\
		\Ifrak_2 &:= \sum_{n\geq 1} \int_{\Delta_{s,t}^n} \left( K_{H_m}(v_1,s) - K_{H_m}(v_1,u) \right) \sum_{\eta_0, \dots, \eta_{n-1}=1}^d \prod_{j=1}^n \partial_{\eta_j} \btilde_{\eta_{j-1}}^{d,\varepsilon}(v_j, \Xtilde_{v_j}^{d,\varepsilon}) e_{\eta_0} dv, \\
		\Ifrak_3 &:= - \left( \Id + \sum_{n\geq 1} \int_{\Delta_{s,t}^n} \sum_{\eta_0, \dots, \eta_{n-1}=1}^d \prod_{j=1}^n \partial_{\eta_j} \btilde_{\eta_{j-1}}^{d,\varepsilon}(v_j, \Xtilde_{v_j}^{d,\varepsilon})dv \right) \\
		&\qquad \times \sum_{n\geq 1} \int_{\Delta_{u,s}^n}  K_{H_m}(v_1,u) \sum_{\eta_0, \dots, \eta_{n-1}=1}^d \prod_{j=1}^n \partial_{\eta_j} \btilde_{\eta_{j-1}}^{d,\varepsilon}(v_j, \Xtilde_{v_j}^{d,\varepsilon}) e_{\eta_0} dv.
	\end{align*}	
	In the following we consider each $\Ifrak_i$, $i=1,2,3$, separately starting with the first. Due to \Cref{lem:doubleint} there exists $\beta_1 \in \left( 0 , \frac{1}{2} \right)$ and a constant $K_1>0$ such that
	\begin{align*}
		\int_{0}^{t} \int_{0}^{t} \frac{\left\Vert \Ifrak_1 \right\Vert_{L^{2}(\Omega;\Hcal)}^{2}}{\left\vert s-u\right\vert^{1+2\beta_1}} ds du = \int_{0}^{t} \int_{0}^{t} \frac{\vert K_{H_m}(t,s) - K_{H_m}(t,u)\vert}{\left\vert s-u\right\vert^{1+2\beta_1}} ds du \leq K_1 < \infty.
	\end{align*}
	Consider now $\Ifrak_2$. Define the density $\Ecal_t^d$ by
	\begin{align*}
		\Ecal_t^d := \exp &\left\lbrace \sum_{k=1}^d \left( \int_0^t K_{H_k}^{-1} \left( \int_0^{\cdot} b_k^{d,\varepsilon} \left(u, X_u^{d,\varepsilon} \right)\lambda_k^{-1} du \right)(s) dW_s^{(k)} \right. \right. \\
			&\quad \left.\left. - \frac{1}{2} \int_0^t K_{H_k}^{-1} \left( \int_0^{\cdot} b_k^{d,\varepsilon} \left(u, X_u^{d,\varepsilon} \right)\lambda_k^{-1} du \right)^2 (s) ds \right) \right\rbrace.
	\end{align*}		
	Then applying Girsanov's \cref{thm:Girsanov}, monotone convergence and noting that $\sup_{d\geq 1} \sup_{t\in[0,T]]} \Vert \Ecal_t^d \Vert_{L^4(\Omega)}< \infty$ yields
	\begin{small}
	\begin{align*}
		&\left\Vert \Ifrak_2 \right\Vert_{L^{2}(\Omega;\Hcal)}^{2} \\
		&~ \leq \sum_{n\geq 1} \sum_{\eta_0, \dots \eta_{n-1}=1}^d \left\Vert \Ecal_t^d \int_{\Delta_{s,t}^n} \left( K_{H_m}(v_1,s) - K_{H_m}(v_1,u) \right)  \prod_{j=1}^n \partial_{\eta_j} \btilde_{\eta_{j-1}}^{d,\varepsilon} \left( v_j, \tau \Bb_{v_j}^x \right) dv \right\Vert_{L^2(\Omega)}^2 \\
		&~ \lesssim \sum_{n\geq 1} \sum_{\eta_0, \dots \eta_{n-1}=1}^d \left\Vert \int_{\Delta_{s,t}^n} \left( K_{H_m}(v_1,s) - K_{H_m}(v_1,u) \right) \prod_{j=1}^n \partial_{\eta_j} \btilde_{\eta_{j-1}}^{d,\varepsilon} \left( v_j, \tau \Bb_{v_j}^x \right) dv \right\Vert_{L^4(\Omega)}^2.
	\end{align*}
	\end{small}
	Using equation \eqref{shuffleIntegral} yields that
	\begin{align*}
		\left\vert \Afrak_2 \right\vert^2 := \left\vert \int_{\Delta_{s,t}^n} \left( K_{H_m}(v_1,s) - K_{H_m}(v_1,u) \right) \prod_{j=1}^n \partial_{\eta_j} \btilde_{\eta_{j-1}}^{d,\varepsilon} \left( v_j, \tau \Bb_{v_j}^x \right) dv \right\vert^2
	\end{align*}
	can be written as
	\begin{footnotesize}
	\begin{align*}
		\left\vert \Afrak_2 \right\vert^2 = \sum_{\sigma \in \Scal(n,n)} \int_{\Delta_{s,t}^{2n}} \left( \prod_{j=1}^{2n} g_{[\sigma(j)]}\left(v_j, \tau \Bb_{v_j}^x \right) \right) \left( \prod_{i=0}^1 \left( K_{H_m}(v_{(in+1)},s) - K_{H_m}(v_{(in+1)},u) \right) \right) dv
	\end{align*}
	\end{footnotesize}
		where for $j=1,\dots, n$
	\begin{align*}
		g_j \left(\cdot, \tau \Bb_{\cdot}^x \right) = \partial_{\eta_j} \btilde_{\eta_{j-1}}^{d,\varepsilon} \left( \cdot , \tau \Bb_{\cdot}^x \right)
	\end{align*}
	Repeating the application of \eqref{shuffleIntegral} yields
	\begin{footnotesize}
	\begin{align*}
		\left\vert \Afrak_2 \right\vert^4 = \sum_{\sigma \in \Scal(4; n)} \int_{\Delta_{s,t}^{4n}} \left( \prod_{j=1}^{4n} g_{[\sigma(j)]}\left(v_j, \tau \Bb_{v_j}^x \right) \right) \left( \prod_{i=0}^3 \left( K_{H_m}(v_{(in+1)},s) - K_{H_m}(v_{(in+1)},u) \right) \right) dv.
	\end{align*}
	\end{footnotesize}
	Defining $f_j^{d,\varepsilon}(t,\ytilde) := \btilde_{\eta_{j-1}}^{d,\varepsilon} \left( t , \sqrt{Q} \sqrt{\Kcal} \ytilde \right)$ permits the use of \Cref{prop:mainestimate1} with $\sum_{j=1}^{4n} \varepsilon_{j} = 4$, $\vert \alpha_j \vert = 1$ for all $1 \leq j \leq 4n$ and thus $\vert \alpha \vert = 4n$. Consequently, we get using the assumptions on $H$ and $b$ that
	\begin{align*}
		\EW{\vert \Afrak_2 \vert^4} & = \left\Vert \int_{\Delta_{s,t}^n} \left( K_{H_m}(v_1,s) - K_{H_m}(v_1,u) \right) \prod_{j=1}^n \partial_{\eta_j} b_{\eta_{j-1}}^{d,\varepsilon}\left( v_j, \tau \Bb_{v_j}^x \right) dv \right\Vert_{L^4(\Omega)}^4 \\
		&\leq \# \Scal(4;n) \frac{K_{d,H}^{4n} \cdot T^{\frac{\vert \alpha \vert}{12}}}{\sqrt{2\pi}^{4dn}} \left( C_{H_m,T} \left( \frac{s - u}{s u}\right)^{\gamma_m} s^{(H_m-\frac{1}{2} - \gamma_m)}\right)^{\sum_{j=1}^{4n} \varepsilon_j} \\
		&\quad \times \prod_{j=1}^{n} \left\Vert \btilde_{\eta_{j-1}}^{d,\varepsilon}(\cdot , \sqrt{Q} \sqrt{\Kcal}z_{j})\right\Vert _{L^{1}(\mathbb{R}^{d};L^{\infty }([0,T]))}^4 \\
		&\quad \times \frac{\left(\prod_{k=1}^{d} \left(2\left\vert \alpha ^{(k)}\right\vert\right)!\right)^{\frac{1}{4}}(t-s )^{-\sum_{k=1}^{d} H_k \left(4n+2\left\vert \alpha^{(k)}\right\vert \right)+ \left(H_m-\frac{1}{2}-\gamma_m \right)\sum_{j=1}^{4n} \varepsilon _{j}+4n}}{\Gamma(8n -\sum_{k=1}^{d}H_k(8n+4\left\vert \alpha^{(k)}\right\vert )+2(H_m - \frac{1}{2}-\gamma_m)\sum_{j=1}^{4n} \varepsilon _{j})^{\frac{1}{2}}} \\
		&\leq 2^{8n} \frac{K_{d,H}^{4n} \cdot T^{\frac{n}{3}}}{\sqrt{2\pi}^{4dn}} C_{H_m,T}^4 \prod_{j=1}^n D_{\eta_{j-1}}^4 \lambda_{\eta_{j-1}}^4 \\
	&\quad \times \left( \frac{s - u}{su}\right)^{4 \gamma_m} s^{4 \left(H_m-\frac{1}{2} - \gamma_m \right)} (t - s )^{4\left(H_m-\frac{1}{2}-\gamma_m \right)} T^{4n} S_{n},
	\end{align*}
	where
	\begin{align*}
	S_{n} = \sup_{\eta} \frac{\left(\prod_{k=1}^{d} \left(2\left\vert \alpha^{(k)}\right\vert\right)!\right)^{\frac{1}{4}}}{\Gamma\left(8n -\sum_{k=1}^{d} H_k \left(8n+4\left\vert \alpha^{(k)}\right\vert \right) + 8\left(H_m - \frac{1}{2}-\gamma_m \right) \right)^{\frac{1}{2}}}.
	\end{align*}
	For $n\geq 1$ we have due to the assumptions on $H$ that
	\begin{align*}
		A_n &:= 8n -\sum_{k=1}^{d} H_k \left(8n+4\left\vert \alpha^{(k)}\right\vert \right) + 8\left(H_m - \frac{1}{2}-\gamma_m \right) \\
		&\quad \geq 8n - 8n \Vert H \Vert_{\ell^1} - 16n \sup_{k\geq 1} \vert H_k \vert - 4 > \frac{16}{3} n - 4 > 0.
	\end{align*}
	Thus, we have for $n$ sufficiently large that
	\begin{align*}
		\Gamma(A_n) \geq \Gamma\left(\frac{16}{3}n-4\right) \sim \Gamma\left(\frac{16}{3}n + 1 \right) \left(\frac{16}{3}n\right)^{-4},
	\end{align*}
	and therefore by the approximations in \Cref{rem:boundFaculty}
	\begin{align*}
		S_{n} &\leq \frac{\left(\prod_{k=1}^{d} \left(2\left\vert \alpha^{(k)}\right\vert\right)!\right)^{\frac{1}{4}}}{\Gamma\left(8n -\sum_{k=1}^{d} H_k \left(8n+4\left\vert \alpha^{(k)}\right\vert \right) + 8\left(H_m - \frac{1}{2}-\gamma_m \right) \right)^{\frac{1}{2}}} \\
		&\sim \frac{\left(2\pi \right)^{\frac{d}{8}} e^{\frac{n}{2}} ( (10n)!)^{\frac{1}{4}} \left(\frac{16}{3}n\right)^{2}}{\left( 20 \pi n \right)^{\frac{1}{8}}\Gamma\left(\frac{16}{3}n  + 1 \right)^{\frac{1}{2}}} \\
		&\leq C^n \frac{\left(2\pi \right)^{\frac{d}{8}} ( (10n)!)^{\frac{1}{4}} n^{\frac{15}{8}}}{\Gamma\left(\frac{16}{3}n + 1\right)^{\frac{1}{2}}},
	\end{align*}
	where $C>0$ is a constant which may in the following vary from line to line. Using Stirling's formula we have moreover that
	\begin{align*}
		\frac{(10n)!}{\Gamma\left( \frac{16}{3} n + 1\right)^2} &\leq \frac{e^{\frac{1}{120n}} \sqrt{20 \pi n} \left( \frac{10n}{e} \right)^{10n}}{\frac{32}{3} \pi n \left( \frac{\frac{16}{3} n}{e} \right)^{\frac{32}{3}n}} \leq \frac{C^n}{\sqrt{\frac{4}{3}n}} \left(\frac{2}{3} n\right)^{-\frac{2}{3}n} \leq \frac{C^n}{\Gamma\left(\frac{2}{3}n +1\right)}.
	\end{align*}
	Consequently, we have for $S_n$ that
	\begin{align*}
		S_{n} &\sim C^n \left(2\pi \right)^{\frac{d}{8}} n^{\frac{15}{8}} \left(\frac{1}{\Gamma\left(\frac{2}{3}n +1\right)} \right)^{\frac{1}{4}}.
	\end{align*}
	Furthermore, using \Cref{lem:prodSumInterchange} we have for every $n\geq 1$ that
	\begin{align*}
		\sum_{\eta_0, \dots \eta_{n-1}=1}^d \prod_{j=1}^{n} D_{\eta_{j-1}}^4 \lambda_{\eta_{j-1}}^4 = \left( \sum_{k=1}^d D_k^4 \lambda_k^4 \right)^n.
	\end{align*}
	Moreover, due to the assumptions on $H$ there exists a finite constant $K>0$ which is independent of $d$ and $H$ such that $K_{d,H} \leq K$, cf. \eqref{eq:constantKdh}. Consequently, there exists a constant $C>0$ independent of $d$, $\varepsilon$ and $n$ such that for $n$ sufficiently large
	\begin{align*}
		\Dcal_n^2 &:= \sum_{\eta_0, \dots \eta_{n-1}=1}^d 2^{8n} \frac{K_{d,H}^{4n} \cdot T^{\frac{n}{3}}}{\sqrt{2\pi}^{4dn}} \left(\prod_{j=1}^n D_{\eta_{j-1}}^4 \lambda_{\eta_{j-1}}^4\right) T^{4n} S_{n}\\
		&\sim \left(\frac{ n^{\frac{15}{2}} C^n}{\Gamma\left(\frac{2}{3}n +1\right)} \right)^{\frac{1}{4}}
	\end{align*}
	and thus due to the comparison test
	\begin{align*}
		\sum_{n\geq 1} \Dcal_n < \infty.
	\end{align*}
	Hence, there exists a constant $C_2>0$ independent of $d$ and $\varepsilon$ such that
	\begin{align*}
		\left\Vert \Ifrak_2 \right\Vert_{L^{2}(\Omega;\Hcal)}^{2} &\leq C_2 C_{H_m,T}^4 \left( \frac{s - u}{su}\right)^{2 \gamma_m} s^{2(H_m-\frac{1}{2} - \gamma_m)} (t - s )^{2\left(H_m-\frac{1}{2}-\gamma_m \right)},
	\end{align*}
	and thus we can find a $\beta_2 \in \left(0, \frac{1}{2} \right)$ sufficiently small such that
	\begin{align*}
		\int_{0}^{t} \int_{0}^{t} \frac{\left\Vert \Ifrak_2 \right\Vert_{L^{2}(\Omega;\Hcal)}^{2}}{\left\vert s-u\right\vert^{1+2\beta_2}} ds du  \lesssim C_{H_m,T}^4 < \infty.
	\end{align*}
	Equivalently, we can show for $\Ifrak_3$ that there exists a $\beta_3 \in \left(0,\frac{1}{2} \right)$ such that
	\begin{align*}
		\int_{0}^{t} \int_{0}^{t} \frac{\left\Vert \Ifrak_3 \right\Vert_{L^{2}(\Omega;\Hcal)}^{2}}{\left\vert s-u\right\vert^{1+2\beta_2}} ds du  \lesssim C_{H_m,T}^4 < \infty,
	\end{align*}
	where $C_{H_m,T} = C \cdot c_{H_m}$ due to \Cref{lem:iterativeInt}. Here, $c_{H_m}$ is the constant in \eqref{eq:kernel}. Thus, we can find a constant $\tilde{C}>0$ independent of $H_m$ such that $\sup_{H \in (0,\frac{1}{6})} C_{H,T} \leq C<\infty$. Finally, we get with $\beta_m := \min \lbrace \beta_1, \beta_2, \beta_3 \rbrace$ that we can find $\gamma_m$, $m\geq 1$, such that
	\begin{align*}
		&\sum_{m\geq 1} \frac{1}{(1-2^{-2(\beta_m-\alpha_m)})\gamma_m^{2}} \int_{0}^{t} \int_{0}^{t} \frac{\left\Vert D_s^m X_t^{d,\varepsilon} - D_u^m X_t^{d,\varepsilon}  \right\Vert_{L^{2}(\Omega;\Hcal)}^{2}}{\left\vert s-u\right\vert^{1+2\beta_m}} ds du \\
		&\quad \leq \sum_{m\geq 1} \frac{1}{(1-2^{-2(\beta_m-\alpha_m)})\gamma_m^{2}} \int_{0}^{t} \int_{0}^{t} \frac{\lambda_m^2 \sum_{l=1}^3 \left\Vert \Ifrak_l  \right\Vert_{L^{2}(\Omega;\Hcal)}^{2}}{\left\vert s-u\right\vert^{1+2\beta_m}} ds du \\
		&\quad \lesssim \sum_{m\geq 1} \frac{\lambda_m^2 \tilde{C}^4}{(1-2^{-2(\beta_m-\alpha_m)})\gamma_m^{2}} < \infty,
	\end{align*}
	uniformly in $d\geq 1$ and $\varepsilon>0$. Similarly, we can show that
	\begin{align}\label{eq:MalliavinEstimate}
		\sum_{m\geq 1}\gamma_m^{-2} \left\Vert D^m X_t^{d,\varepsilon}  \right\Vert_{L^{2}(\Omega;L^{2}([0,1];\Hcal))}^{2} < \infty
	\end{align}
	uniformly in $d\geq 1$ and $\varepsilon>0$ and consequently the compactness criterion \Cref{thm:compactCriterion} yields the result.
\end{proof}

\subsection{$\Fbb^\Bb$ adaptedness and strong solution}

Finally, we can state and prove the main statement of this paper

\begin{theorem}
	Let $b \in \Bfrak([0,T] \times \Hcal; \Hcal)$. Then SDE \eqref{eq:MainSDE} has a unique Malliavin differentiable strong solution.
\end{theorem}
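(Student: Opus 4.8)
The plan is to assemble the results of \Cref{sec:weakSol} and \Cref{sec:strongSol} into the scheme ``strong existence $+$ uniqueness in law $\Rightarrow$ strong uniqueness'' announced in the introduction. First I would observe that every $b\in\Bfrak([0,T]\times\Hcal;\Hcal)$ satisfies the hypotheses of \Cref{prop:weakSolution}, since the first bound in \eqref{eq:conditionDrift} is precisely $\Vert b_k\Vert_\infty\le C_k\lambda_k$ with $C\in\ell^1$. Hence \eqref{eq:MainSDE} admits a weak solution $(X_t)_{t\in[0,T]}$ with $\EW{\sup_{t\in[0,T]}\Vert X_t\Vert_\Hcal^2}<\infty$, unique in law. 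Working on the stochastic basis carrying the cylindrical Brownian motion $W$ and the driving noise $\Bb=\sqrt{Q}B^H$, I would then invoke \Cref{cor:representationMalliavinRegular} to produce, for every $d\ge 1$ and $\varepsilon>0$, the unique $\Fbb^\Bb$-adapted, Malliavin differentiable strong solution $X_t^{d,\varepsilon}$ of the mollified equation \eqref{eq:approximationSDE}.

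The core of the argument is the passage to the limit, obtained by combining strong and weak compactness. By \Cref{thm:compactnessApplication} the family $\{X_t^{d,\varepsilon}\}_{d,\varepsilon}$ is relatively compact in $L^2(\Omega,\Fcal_t^W;\Hcal)$, so along a subsequence $X_t^{d_n,\varepsilon_n}\to Y_t$ strongly in $L^2(\Omega,\Fcal_t^W;\Hcal)$, and $Y_t$ is $\Fcal_t^W$-measurable as a strong limit of $\Fcal_t^W$-measurable random variables. To identify $Y_t$ I would feed this into \Cref{lem:weakConvergence}: for any bounded continuous $\phi:\Hcal\to\Rbb$, continuity and boundedness of $\phi$ give $\phi(X_t^{d_n,\varepsilon_n})\to\phi(Y_t)$ in $L^2$, while \Cref{lem:weakConvergence} gives weak convergence of $\phi(X_t^{d_n,\varepsilon_n})$ to $\EW{\phi(X_t)\big\vert\Fcal_t^W}$. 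Uniqueness of weak limits then forces $\phi(Y_t)=\EW{\phi(X_t)\big\vert\Fcal_t^W}$ for every such $\phi$, i.e. the regular conditional law of $X_t$ given $\Fcal_t^W$ is the Dirac mass $\delta_{Y_t}$; since $\EW{\Vert X_t\Vert_\Hcal^2}<\infty$ this yields $X_t=Y_t$ $\Pbb$-a.s. Thus the weak solution is $\Fcal_t^W$-measurable, and because $\Bb=\sqrt{Q}B^H$ generates the same filtration as $W$ (the integral representation \eqref{eq:fBmAsWienerIntegral} being invertible), $X$ is in fact $\Fbb^\Bb$-adapted, hence a strong solution in the sense of \Cref{def:strongSolution}.

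Malliavin differentiability of the solution would come essentially for free from the estimates already produced: the proof of \Cref{thm:compactnessApplication} bounds $\sum_{m\ge1}\gamma_m^{-2}\Vert D^m X_t^{d,\varepsilon}\Vert_{L^2(\Omega;L^2([0,T];\Hcal))}^2$ uniformly in $(d,\varepsilon)$, whence $\sup_{d,\varepsilon}\NDO{X_t^{d,\varepsilon}}<\infty$; together with $X_t^{d_n,\varepsilon_n}\to X_t$ in $L^2$ and the closability of the Malliavin derivative, \cite[Lemma 1.2.3]{Nualart_MalliavinCalculus} gives $X_t\in\Dbb^{1,2}(\Hcal)$. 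For uniqueness I would note that the limit $Y_t=X_t$ is a fixed $\Fbb^\Bb$-measurable functional of the driving noise: every subsequential limit coincides with $\EW{X_t\big\vert\Fcal_t^W}$, which depends only on the (unique) law of the weak solution. Consequently any two strong solutions driven by the same $\Bb$ agree $\Pbb$-a.s., so pathwise --- and therefore strong --- uniqueness holds.

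I expect the delicate point to be precisely this identification step rather than any new estimate: one must combine the strong $L^2$-compactness of \Cref{thm:compactnessApplication} with the merely weak convergence of \Cref{lem:weakConvergence} to pin the limit down as $\EW{X_t\big\vert\Fcal_t^W}$ and then upgrade $\Fcal_t^W$-measurability to genuine $\Fbb^\Bb$-adaptedness, all the genuinely hard analysis having already been absorbed into the compactness estimate.
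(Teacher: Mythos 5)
Your proposal is correct and follows essentially the same route as the paper: weak existence and uniqueness in law from \Cref{prop:weakSolution}, weak convergence of the mollified strong solutions from \Cref{lem:weakConvergence}, strong subsequential convergence from \Cref{thm:compactnessApplication}, identification of the limit via uniqueness of limits (forcing $X_t$ to be $\Fcal_t^W$-measurable), the identity $\Fbb^W=\Fbb^\Bb$ to conclude adaptedness, and Malliavin differentiability from the uniform derivative estimates together with closability. Your more explicit spelling-out of the identification step (the conditional law being a Dirac mass) and of how uniqueness in law upgrades to strong uniqueness merely makes precise what the paper's proof states tersely.
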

\begin{proof}
	Let $(X_t)_{t\in[0,T]}$ be a weak solution of SDE \eqref{eq:MainSDE} which is unique in law due to \Cref{prop:weakSolution}.  Due to \Cref{lem:weakConvergence} we know that for every bounded globally Lipschitz continuous function $\phi: \Hcal \to \Rbb$
	\begin{align*}
		\phi(X_t^{d,\varepsilon}) \xrightarrow[\varepsilon \to 0, ~d \to \infty]{} \EW{\phi(X_t) \vert \Fcal_t^W}
	\end{align*}
	weakly in $L^2(\Omega,\Fcal_t^W)$. Furthermore, by \Cref{thm:compactnessApplication} there exist subsequences $\lbrace d_k \rbrace_{k\geq 1}$ and $\lbrace \varepsilon_n \rbrace_{n\geq 1}$ such that
	\begin{align*}
		\phi(X_t^{d_k,\varepsilon_n}) \xrightarrow[n \to \infty,~ d \to \infty]{} \phi\left(\EW{X_t \vert \Fcal_t^W}\right)
	\end{align*}
	strongly in $L^2(\Omega,\Fcal_t^W)$. Uniqueness of the limit yields that $X_t$ is $\Fcal_t^W$--measurable for all $t\in [0,T]$. Since $\Fbb^W = \Fbb^\Bb$, we get that $(X_t)_{t\in[0,T]}$ is a unique strong solution of SDE \eqref{eq:MainSDE}. Malliavin differentiability follows by \eqref{eq:MalliavinEstimate} and noting that the estimate holds also for $\gamma_m \equiv 1$.
\end{proof}

\section{Example}

	In this section we give an example of a drift function $b \in \Bfrak([0,T]\times \Hcal; \Hcal)$ to show that the class does not merely contain the null function.\par 
	Let $f_k \in L^1(\ell^2; L^\infty([0,T];\ell^2))$, $k\geq 1$, i.e. for all $k\geq 1$ we have for all $z \in \ell^2$
	\begin{align}\label{eq:assumptionsF}
		&\sup_{t\in[0,T]} \vert f_k(t,z) \vert \leq C_k^f < \infty
		&\sup_{d\geq 1} \int_{\Rbb^d} \sup_{t\in[0,T]} \vert f_k(t,z) \vert dz \leq D_k^f < \infty,
	\end{align}
	such that $C^f, D^f \in \ell^1$ and define for every $k\geq 1$ an operator $A_k: \Hcal \to \Hcal$ which is invertible on $A_k \Hcal$ such that for all $k\geq 1$
	\begin{align*}
		\det \left(A_k^{-1} \sqrt{Q}^{-1} \sqrt{\Kcal}^{-1}\right) \leq \Dcal_k^A< \infty,
	\end{align*}
	where $\Dcal^A \in \ell^1$. Then, we define
	\begin{align*}
		b_k(t, y) := f_k(t,\tau^{-1} A_k y).
	\end{align*}
	This yields
	\begin{align*}
		&\sup_{t\in[0,T]} \vert b_k(t,y) \vert = \sup_{t\in[0,T]} \vert f_k(t,\tau^{-1} A_k y) \vert \leq C_k^f, \\
		&\int_\Hcal \sup_{t\in[0,T]} \left\vert b_k \left(t, \sqrt{Q} \sqrt{\Kcal} y\right) \right\vert dy = \int_\Hcal \sup_{t\in[0,T]} \left\vert f_k \left(t, \tau^{-1} A_k \sqrt{Q} \sqrt{\Kcal} y \right) \right\vert dy \\
		&\quad = \int_{\tau^{-1} A_k \Hcal} \sup_{t\in[0,T]} \vert f_k(t,z) \vert \det\left(A_k^{-1} \sqrt{Q}^{-1} \sqrt{\Kcal}^{-1}\right) dz \leq D_k^f \Dcal_k^A.
	\end{align*}
	Due to the definition $C^f \in \ell^1$ and $D^f \cdot \Dcal^A \in \ell^1$ and thus $b \in \Bfrak([0,T] \times \Hcal; \Hcal)$. \par
	A possible choice for $f$ is
	\begin{align*}
		f_k(t,z) = C_k^f \cdot e^{- t} \cdot e^{- D_k^f \frac{\vert z \vert}{2}} \left(a \mathbbm{1}_{\lbrace z\in A \rbrace}+ b \mathbbm{1}_{\lbrace z\in A^c \rbrace} \right),
	\end{align*}
	where $a,b \in \Rbb$ and $A \subset \Hcal$, which obviously fulfills the assumptions \eqref{eq:assumptionsF}. The operator $A_k$, $k\geq 1$, can for example be chosen such that there exists a finite subset $N_k \subset \Nbb$ such that for all $k\geq 1$
	\begin{align*}
		\prod_{n \in N_k} \lambda_k^{-1} \sqrt{\Kfrak_{H_k}}^{-1} \leq C.
	\end{align*}
	and we have for every $x\in \Hcal$
	\begin{align*}
		A_k x = \Dcal_k^A \sum_{n \in N_k} x^{(n)} e_n.
	\end{align*}
	Then $A_k$ is invertible on $A_k \Hcal$ for every $k\geq 1$ and
	\begin{align*}
		\det\left(A_k^{-1} \sqrt{Q}^{-1} \sqrt{\Kcal}^{-1}\right) = \Dcal_k^A \prod_{n \in N_k} \lambda_k^{-1} \sqrt{\Kfrak_{H_k}}^{-1} \leq C \Dcal_k^A.
	\end{align*}

\appendix
\section{Compactness Criterion}
	The following result which is originally due to \cite{daprato1992compact} in the finite dimensional case and which can be e.g. found in \cite{bogachev2010differentiable}, provides a compactness criterion of square integrable cylindrical Wiener processes on a Hilbert space.

\begin{theorem}\label{thm:generalCompactCriterion}
	Let $(B_{t})_{t\in[0,T]}$ be a cylindrical Wiener process on a separable Hilbert space $\Hcal$ with respect to a complete probability space $(\Omega ,\mathcal{F},\mu )$, where $\mathcal{F}$ is generated by $(B_{t})_{t\in[0,T]}$. Further, let $\mathcal{L}_{HS}(\Hcal,\mathbb{R})$ be the space of Hilbert-Schmidt operators from $\Hcal$ to $\mathbb{R}$ and let $D: \mathbb{D}^{1,2}\longrightarrow L^{2}(\Omega ;L^{2}([0,T])\otimes \mathcal{L}_{HS}(\Hcal,\mathbb{R}))$ be the Malliavin derivative in the direction of $(B_{t})_{t\in[0,T]}$, where $\mathbb{D}^{1,2}$ is the space of Malliavin differentiable random variables in $L^{2}(\Omega ).\newline$
Suppose that $C$ is a self-adjoint compact operator on $L^{2}([0,T])\otimes \mathcal{L}_{HS}(\Hcal,\mathbb{R})$ with dense image. Then for any $c>0$ the set
\begin{equation*}
\mathcal{G}=\left\{ G\in \mathbb{D}^{1,2}:\left\Vert G\right\Vert_{L^{2}(\Omega )}+\left\Vert C^{-1}DG\right\Vert _{L^{2}(\Omega;L^{2}([0,T])\otimes \mathcal{L}_{HS}(\Hcal,\mathbb{R}))}\leq c\right\} 
\end{equation*}
is relatively compact in $L^{2}(\Omega)$.
\end{theorem}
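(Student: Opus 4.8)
The plan is to reduce the statement to the standard total-boundedness criterion in the Hilbert space $L^2(\Omega)$, by diagonalizing $C$ and expanding every functional in the Wiener chaos attached to the spectral basis of $C$. Write $\mathfrak{H} := L^2([0,T])\otimes\mathcal{L}_{HS}(\Hcal,\Rbb)$ for the Cameron--Martin direction space. Since $C$ is self-adjoint and compact with dense image, it is injective, so the spectral theorem provides an orthonormal basis $\{\xi_k\}_{k\geq 1}$ of $\mathfrak{H}$ with $C\xi_k=\mu_k\xi_k$, where $\mu_k\neq 0$ for all $k$, $|\mu_k|\leq\|C\|$, and $\mu_k\to 0$ as $k\to\infty$; on the domain of $C^{-1}$ one has $C^{-1}\xi_k=\mu_k^{-1}\xi_k$. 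As a preliminary sanity check I would note $\|h\|_{\mathfrak{H}}\leq\|C\|\,\|C^{-1}h\|_{\mathfrak{H}}$, so that $\|DG\|_{L^2(\Omega;\mathfrak{H})}\leq\|C\|\,c$ and $\Gcal$ is in particular a bounded subset of $L^2(\Omega)$.

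\textbf{Diagonalizing the two norms.} The technical heart is to turn both norms in the definition of $\Gcal$ into weighted $\ell^2$ sums over the \emph{same} coefficient sequence. Using the isonormal Gaussian family $\{B(h)\}_{h\in\mathfrak{H}}$ generated by $(B_t)_{t\in[0,T]}$, I would introduce the Hermite (Wiener chaos) orthonormal basis $\{\Phi_\alpha\}$ of $L^2(\Omega)$, indexed by finitely supported multi-indices $\alpha=(\alpha_k)_{k\geq 1}$, where $\Phi_\alpha=\prod_{k\geq 1}\bigl(\alpha_k!\bigr)^{-1/2}H_{\alpha_k}\bigl(B(\xi_k)\bigr)$ and $H_n$ is the $n$-th Hermite polynomial. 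Expanding $G=\sum_\alpha c_\alpha\Phi_\alpha$, the action of the Malliavin derivative on Hermite polynomials, $D\Phi_\alpha=\sum_k\sqrt{\alpha_k}\,\Phi_{\alpha-\delta_k}\otimes\xi_k$, together with $C^{-1}\xi_k=\mu_k^{-1}\xi_k$ and the orthonormality of the family $\{\Phi_\beta\otimes\xi_k\}$ in $L^2(\Omega;\mathfrak{H})$, yields after reindexing $\alpha\mapsto\alpha-\delta_k$ the clean identities
\begin{align*}
	\|G\|_{L^2(\Omega)}^2=\sum_\alpha c_\alpha^2,\qquad \|C^{-1}DG\|_{L^2(\Omega;\mathfrak{H})}^2=\sum_\alpha c_\alpha^2\,w(\alpha),\quad w(\alpha):=\sum_{k\geq 1}\alpha_k\,\mu_k^{-2}.
\end{align*}
Carefully justifying this second identity --- in particular the termwise action of $C^{-1}$ on the $\mathfrak{H}$-factor of $DG$ and the bookkeeping of the reindexing --- is the step I expect to require the most care; everything afterwards is soft.

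\textbf{Compactness via coercivity of the weight.} With these identities in hand I would observe that the weight $w$ is coercive on the index set. Since $\mu_k^{-2}\geq\|C\|^{-2}$ one has $w(\alpha)\geq\|C\|^{-2}|\alpha|$, which bounds the total degree on any sublevel set; and $\alpha_k\geq 1$ forces $\mu_k^{-2}\leq w(\alpha)$, hence $\mu_k^2\geq 1/R$ on $\{w\leq R\}$, which (as $\mu_k\to 0$) holds for only finitely many $k$. Therefore each level set $F_R:=\{\alpha:\,w(\alpha)\leq R\}$ is \emph{finite}. The tail estimate is then immediate from the second identity,
\begin{align*}
	\sum_{\alpha\notin F_R}c_\alpha^2\leq\frac{1}{R}\sum_\alpha c_\alpha^2\,w(\alpha)=\frac{1}{R}\,\|C^{-1}DG\|_{L^2(\Omega;\mathfrak{H})}^2\leq\frac{c^2}{R},
\end{align*}
uniformly over $G\in\Gcal$.

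\textbf{Conclusion.} Given $\varepsilon>0$, choosing $R>c^2/\varepsilon$ makes the tail uniformly smaller than $\varepsilon$, while on the finite coordinate block $F_R$ the vectors $(c_\alpha)_{\alpha\in F_R}$ form a bounded subset of the finite-dimensional space $\Rbb^{F_R}$ and thus admit a finite $\varepsilon$-net. Gluing this net to the uniform tail bound produces a finite $2\varepsilon$-net for $\Gcal$ in $L^2(\Omega)$, so $\Gcal$ is totally bounded; since $L^2(\Omega)$ is complete, $\Gcal$ is relatively compact, which is the claim. The whole argument is the chaos-expansion version of the ``uniformly small tails'' characterization of compactness in $\ell^2$, and the only genuinely delicate point is the exact evaluation of $\|C^{-1}DG\|^2$ in the Hermite basis.
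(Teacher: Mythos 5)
Your argument is correct, but there is nothing in the paper to compare it against: the paper states this theorem without proof, quoting it from the literature (it is attributed to Da Prato--Malliavin--Nualart and to Bogachev's book), and only proves the specialized Theorem \ref{thm:compactCriterion} by \emph{using} this result as a black box together with the Haar-basis operator of Lemma \ref{lem:compactCriterion}. So what you have supplied is a self-contained proof of the quoted criterion, and it is essentially the classical one: diagonalize $C$, expand in the Wiener chaos attached to the eigenbasis $\{\xi_k\}$, and observe that the bound on $\left\Vert C^{-1}DG\right\Vert$ is exactly a uniform bound on $\sum_\alpha c_\alpha^2\, w(\alpha)$ with the coercive weight $w(\alpha)=\sum_k \alpha_k\mu_k^{-2}$, whose sublevel sets are finite because $\mu_k\to 0$ forces the support of $\alpha$ into finitely many coordinates and $\mu_k^{-2}\geq \Vert C\Vert^{-2}$ bounds the total degree. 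The one step you rightly flag as delicate --- the termwise evaluation of $C^{-1}DG$ in the basis $\{\Phi_\beta\otimes\xi_k\}$ --- does go through: for $G\in\Gcal$ the element $C^{-1}DG$ exists in $L^2(\Omega;\mathfrak{H})$ by hypothesis, and applying the \emph{bounded} operator $\Id\otimes C$ to its expansion and matching coefficients with $D\Phi_\alpha=\sum_k\sqrt{\alpha_k}\,\Phi_{\alpha-\delta_k}\otimes\xi_k$ yields $d_{\beta,k}=\mu_k^{-1}\sqrt{\beta_k+1}\,c_{\beta+\delta_k}$, which after reindexing gives your weighted identity. Two cosmetic points: injectivity of $C$ should be (and implicitly is) deduced from $\ker C=\mathrm{Ran}(C)^{\perp}=\{0\}$, and choosing $R>c^2/\varepsilon$ controls the \emph{squared} tail by $\varepsilon$, so the net you build is a $C\sqrt{\varepsilon}$-net rather than a $2\varepsilon$-net; this does not affect total boundedness. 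Completeness of the Hermite family uses the assumption that $\Fcal$ is generated by $(B_t)$, which you should cite explicitly at that point.
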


In this paper we aim at using a special case of the previous theorem, which is more suitable for explicit estimations. To this end we need the
following auxiliary result from \cite{daprato1992compact}.

\begin{lemma}\label{lem:compactCriterion} 
	Denote by $v_{s},s\geq 0$, with $v_{0}=1$ the Haar basis of $L^{2}([0,1])$. Define for any $0<\alpha <\frac{1}{2}$ the operator $A_{\alpha }$ on $L^{2}([0,1])$ by
\begin{equation*}
A_{\alpha }v_{s}=2^{i\alpha }v_{s},\quad \text{ if }s=2^{i}+j,\quad i\geq 0,\quad 0\leq j\leq 2^{i},
\end{equation*}%
and 
\begin{equation*}
A_{\alpha }1=1.
\end{equation*}%
Then for $\alpha <\beta <\frac{1}{2}$ we have that
\begin{align*}
\left\Vert A_{\alpha }f\right\Vert _{L^{2}([0,1])}^{2} \leq 2(\left\Vert f\right\Vert _{L^{2}([0,1])}^{2}+\frac{1}{1-2^{-2(\beta -\alpha )}}\int_{0}^{1}\int_{0}^{1}\frac{\left\vert f(t)-f(u)\right\vert^{2}}{\left\vert t-u\right\vert ^{1+2\beta }}dtdu).
\end{align*}
\end{lemma}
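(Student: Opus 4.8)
The plan is to diagonalize $A_\alpha$ in the Haar basis and to control the amplified level-$i$ coefficients by a localized version of the Gagliardo seminorm on the right-hand side. Write $f = c_0 v_0 + \sum_{i\geq 0}\sum_{j=0}^{2^i-1} c_{i,j}\, h_{i,j}$, where $v_0 \equiv 1$, the level-$i$ Haar functions are $h_{i,j} = 2^{i/2}(\mathbbm{1}_{I_{i,j}^-} - \mathbbm{1}_{I_{i,j}^+})$ supported on the dyadic interval $I_{i,j} = [j2^{-i},(j+1)2^{-i}]$ split into its left and right halves $I_{i,j}^\pm$ (so that $v_s = h_{i,j}$ for $s = 2^i+j$), and $c_0 = \langle f, v_0\rangle$, $c_{i,j} = \langle f, h_{i,j}\rangle$. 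Since $A_\alpha$ acts as multiplication by $2^{i\alpha}$ on each level-$i$ coefficient and fixes $c_0$, Parseval's identity gives $\Vert A_\alpha f\Vert_{L^2}^2 = |c_0|^2 + \sum_{i\geq 0} 2^{2i\alpha}\sum_j |c_{i,j}|^2$, while $|c_0|^2 \leq \Vert f\Vert_{L^2}^2$ by dropping the remaining nonnegative terms. It therefore suffices to bound the double sum by $(1-2^{-2(\beta-\alpha)})^{-1}$ times the seminorm $[f]_\beta^2 := \int_0^1\int_0^1 |f(t)-f(u)|^2\,|t-u|^{-1-2\beta}\,dt\,du$; the factor $2$ in the statement is then a comfortable margin.

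The key step, which I expect to be the main obstacle, is a local estimate expressing each $c_{i,j}$ through the restriction of $[f]_\beta$ to $I_{i,j}$. Writing the coefficient as a difference of averages, $c_{i,j} = 2^{-i/2-1}(\bar{f}_{I^-} - \bar{f}_{I^+})$ with $\bar{f}_{I^\pm} = |I^\pm|^{-1}\int_{I^\pm} f$ and $|I^\pm| = 2^{-i-1}$, and representing this difference as the double average $\bar{f}_{I^-}-\bar{f}_{I^+} = (|I^-||I^+|)^{-1}\int_{I^-}\int_{I^+}(f(t)-f(u))\,du\,dt$, an application of Jensen's inequality (to the convex map $x\mapsto x^2$ against the normalized product measure) yields
\begin{align*}
	|c_{i,j}|^2 \leq 2^{i} \int_{I_{i,j}^-}\int_{I_{i,j}^+} |f(t)-f(u)|^2\,du\,dt.
\end{align*}
On the support one has $|t-u|\leq 2^{-i}$, so inserting the factor $|t-u|^{1+2\beta}|t-u|^{-1-2\beta}$ and bounding $|t-u|^{1+2\beta}\leq 2^{-i(1+2\beta)}$ gives the desired localized estimate $|c_{i,j}|^2 \leq 2^{-2i\beta}\int_{I_{i,j}^-}\int_{I_{i,j}^+}|f(t)-f(u)|^2\,|t-u|^{-1-2\beta}\,du\,dt$.

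Finally I would sum over $j$ and then $i$. For fixed $i$ the products $I_{i,j}^-\times I_{i,j}^+$ are pairwise disjoint subsets of $[0,1]^2$, so summing over $j$ bounds $\sum_j |c_{i,j}|^2$ by $2^{-2i\beta}[f]_\beta^2$. Multiplying by $2^{2i\alpha}$ and summing the resulting geometric series $\sum_{i\geq 0} 2^{-2i(\beta-\alpha)} = (1-2^{-2(\beta-\alpha)})^{-1}$, which converges precisely because $\beta>\alpha$, yields $\sum_{i\geq 0}2^{2i\alpha}\sum_j|c_{i,j}|^2 \leq (1-2^{-2(\beta-\alpha)})^{-1}[f]_\beta^2$. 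Combined with $|c_0|^2\leq\Vert f\Vert_{L^2}^2$, this produces the claimed inequality (in fact with constant $1$ in place of $2$, which a fortiori gives the stated bound).
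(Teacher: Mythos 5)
Your proof is correct. Note, however, that the paper itself contains no proof of this lemma: it is stated as an auxiliary result imported from \cite{daprato1992compact}, so there is no in-paper argument to compare yours against, and a self-contained proof is genuinely added value. Your route --- diagonalizing $A_\alpha$ in the Haar basis, writing each level-$i$ coefficient as a difference of averages over the two halves of its dyadic support, $c_{i,j}=2^{-i/2-1}(\bar f_{I^-}-\bar f_{I^+})$, applying Jensen's inequality, inserting the factor $|t-u|^{1+2\beta}|t-u|^{-1-2\beta}$ together with $|t-u|\le 2^{-i}$ on $I_{i,j}^-\times I_{i,j}^+$, and using the pairwise disjointness of these rectangles before summing the geometric series $\sum_{i\ge 0}2^{-2i(\beta-\alpha)}=(1-2^{-2(\beta-\alpha)})^{-1}$ --- is the natural way to prove this Besov-type estimate, and every step checks out: the normalization of the Haar coefficients is right, the localized bound $|c_{i,j}|^2\le 2^{-2i\beta}\int_{I_{i,j}^-}\int_{I_{i,j}^+}|f(t)-f(u)|^2\,|t-u|^{-1-2\beta}\,du\,dt$ follows, and the convergence of the series is exactly where $\beta>\alpha$ enters. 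Your argument in fact yields the inequality with constant $1$ in place of $2$, so the stated bound holds a fortiori. The only cosmetic point worth flagging is the indexing in the lemma: the condition $0\le j\le 2^i$ should be read as $0\le j\le 2^i-1$ (one Haar function per dyadic subinterval at level $i$), which is what you implicitly use.
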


\begin{theorem}\label{thm:compactCriterion}
	Let $D^{k}$ be the Malliavin derivative in the direction of the $k$-th component of $(B_{t})_{t\in[0,T]}$. In addition, let $0<\alpha_{k}<\beta _{k}<\frac{1}{2}$ and $\gamma _{k}>0$ for all $k\geq 1$. Define the sequence $\mu_{s,k}=2^{-i\alpha_{k}}\gamma _{k}$, if $s=2^{i}+j$, $i\geq 0$, $0\leq j\leq 2^{i},$ $k\geq 1$. Assume that $\mu_{s,k}\longrightarrow 0$ for $s,k\longrightarrow \infty $. Let $c>0$ and $\mathcal{G}$ the collection of all $G\in \mathbb{D}^{1,2}$ such that
\begin{equation*}
\left\Vert G\right\Vert _{L^{2}(\Omega )}\leq c,
\end{equation*}
\begin{equation*}
\sum_{k\geq 1}\gamma _{k}^{-2}\left\Vert D^{k}G\right\Vert _{L^{2}(\Omega;L^{2}([0,1]))}^{2}\leq c,
\end{equation*}
and
\begin{equation*}
\sum_{k\geq 1}\frac{1}{(1-2^{-2(\beta_{k}-\alpha_{k})})\gamma_{k}^{2}}\int_{0}^{1}\int_{0}^{1}\frac{\left\Vert D_{t}^{k}G-D_{u}^{k}G\right\Vert_{L^{2}(\Omega)}^{2}}{\left\vert t-u\right\vert ^{1+2\beta _{k}}}dtdu\leq c.
\end{equation*}
Then $\mathcal{G}$ is relatively compact in $L^{2}(\Omega)$.
\end{theorem}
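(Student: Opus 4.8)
The plan is to derive this explicit criterion from the general compactness result \Cref{thm:generalCompactCriterion} by constructing a concrete self-adjoint compact operator with dense image whose inverse is controlled by the three displayed sums. Write $\Kcal := L^2([0,1]) \otimes \mathcal{L}_{HS}(\Hcal,\Rbb)$. First I would use the Riesz identification $\mathcal{L}_{HS}(\Hcal,\Rbb) \cong \Hcal$ to realize $\Kcal \cong \bigoplus_{k\geq 1} L^2([0,1])$, so that an element of $\Kcal$ is a sequence $(f_k)_{k\geq 1}$ whose $k$-th slot lives in the direction of the $k$-th component of $(B_t)_{t\in[0,1]}$; under this identification the Malliavin derivative becomes $DG = (D^k G)_{k\geq 1}$.

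Next I would define the operator $C$ on $\Kcal$ diagonally through the operator $A_\alpha$ of \Cref{lem:compactCriterion} by
\[
	C(f_k)_{k\geq 1} := \big( \gamma_k A_{\alpha_k}^{-1} f_k \big)_{k\geq 1}, \qquad C^{-1}(f_k)_{k\geq 1} = \big( \gamma_k^{-1} A_{\alpha_k} f_k \big)_{k\geq 1}.
\]
Since $A_{\alpha_k} v_s = 2^{i\alpha_k} v_s$ on the Haar basis (and $A_{\alpha_k} 1 = 1$), the operator $C$ is diagonal in the orthonormal basis $\{ v_s \otimes e_k^\ast \}_{s\geq 0,\, k\geq 1}$ of $\Kcal$ with strictly positive eigenvalues $\gamma_k 2^{-i\alpha_k} = \mu_{s,k}$ for $s = 2^i + j$. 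Consequently $C$ is self-adjoint; it is injective with dense range, since every basis vector $v_s \otimes e_k^\ast = C(\mu_{s,k}^{-1} v_s \otimes e_k^\ast)$ lies in its image; and it is \emph{compact} precisely because the standing hypothesis $\mu_{s,k} \to 0$ as $s,k\to\infty$ forces the doubly-indexed eigenvalue family to form a null sequence. I expect this compactness verification to be the only genuinely delicate point, as it is exactly here that the technical assumption on $\{\mu_{s,k}\}$ is consumed: one must arrange the doubly-indexed eigenvalues into a single sequence tending to zero.

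It then remains to estimate $C^{-1}DG$ uniformly over $G\in\Gcal$. Applying \Cref{lem:compactCriterion} in each coordinate direction pointwise in $\omega$ and integrating over $\Omega$ (using Tonelli to exchange the expectation with the sum over $k$ and the double time-integral) gives
\[
	\Vert C^{-1}DG \Vert_{L^2(\Omega;\Kcal)}^2 = \sum_{k\geq 1} \gamma_k^{-2}\, \Ebb\!\left[ \Vert A_{\alpha_k} D^k G \Vert_{L^2([0,1])}^2 \right],
\]
and the inequality of \Cref{lem:compactCriterion} bounds the $k$-th summand by twice the sum of $\gamma_k^{-2}\Vert D^k G\Vert_{L^2(\Omega;L^2([0,1]))}^2$ and the weighted double integral appearing in the third hypothesis. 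Summing over $k$ and invoking the second and third conditions defining $\Gcal$ yields $\Vert C^{-1}DG\Vert_{L^2(\Omega;\Kcal)}^2 \leq 4c$. Together with the first condition $\Vert G\Vert_{L^2(\Omega)}\leq c$, this shows that every $G\in\Gcal$ satisfies $\Vert G\Vert_{L^2(\Omega)} + \Vert C^{-1}DG\Vert_{L^2(\Omega;\Kcal)} \leq c + 2\sqrt{c}$, so $\Gcal$ is contained in a set of the type shown to be relatively compact in \Cref{thm:generalCompactCriterion} for this choice of $C$. Hence $\Gcal$ is relatively compact in $L^2(\Omega)$, as claimed.
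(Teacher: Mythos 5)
Your proposal is correct and follows essentially the same route as the paper: you construct the identical diagonal operator $C(v_s\otimes e_k^\ast)=\mu_{s,k}\,v_s\otimes e_k^\ast$, apply \Cref{lem:compactCriterion} coordinate-wise to bound $\Vert C^{-1}DG\Vert_{L^2(\Omega;L^2([0,1])\otimes\mathcal{L}_{HS}(\Hcal,\Rbb))}^2$ by the second and third defining conditions of $\Gcal$, and then invoke \Cref{thm:generalCompactCriterion}. The only differences are cosmetic (explicit constants and a more detailed verification of compactness of $C$ from $\mu_{s,k}\to 0$).
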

\begin{proof}
	As before denote by $v_{s},s\geq 0$, with $v_{0}=1$ the Haar basis of $L^{2}([0,1])$ and by $e_{k}^{\ast }= \langle e_k, \cdot \rangle_{H}, k\geq 1$, an orthonormal basis of $\mathcal{L}_{HS}(\Hcal,\mathbb{R})$, where $e_k, k \geq 0$, is an orthonormal basis of $\Hcal$. Define a self-adjoint compact operator $C$ on $L^{2}([0,1])\otimes \mathcal{L}_{HS}(\Hcal,\mathbb{R})$ with dense image by
\begin{equation*}
C(v_{s}\otimes e_{k}^{\ast })=\mu_{s,k}v_{s}\otimes e_{k}^{\ast },\quad s\geq 0,\quad k\geq 1.
\end{equation*}
Then it follows for $G\in \mathbb{D}^{1,2}$ from \Cref{lem:compactCriterion} that 
\begin{align*}
&\left\Vert C^{-1}DG\right\Vert _{L^{2}(\Omega ;L^{2}([0,1])\otimes \mathcal{L}_{HS}(\Hcal,\mathbb{R}))}^{2} \\
&\quad =\sum_{k\geq 1}\sum_{s\geq 0}\mu _{s,k}^{-2}E[\left\langle DG,v_{s}\otimes e_{k}^{\ast }\right\rangle _{L^{2}([0,1])\otimes \mathcal{L}_{HS}(\Hcal,\mathbb{R}))}^{2}] \\
&\quad =\sum_{k\geq 1}\gamma _{k}^{-2}\left\Vert A_{\alpha _{k}}D^{k}G\right\Vert_{L^{2}(\Omega ;L^{2}([0,1]))}^{2} \\
&\quad \leq 2\sum_{k\geq 1}\gamma _{k}^{-2}\left\Vert D^{k}G\right\Vert_{L^{2}(\Omega ;L^{2}([0,1]))}^{2} \\
&\qquad+2\sum_{k\geq 1}\frac{1}{(1-2^{-2(\beta _{k}-\alpha _{k})})\gamma _{k}^{2}} \int_{0}^{1}\int_{0}^{1}\frac{\left\Vert D_{t}^{k}G-D_{u}^{k}G\right\Vert_{L^{2}(\Omega )}^{2}}{\left\vert t-u\right\vert ^{1+2\beta _{k}}}dtdu \\
&\quad \leq M
\end{align*}
for a constant $M<\infty $. So using \Cref{thm:generalCompactCriterion} we obtain the result.
\end{proof}

\section{Integration by parts formula}
		In this section we derive an integration by parts formula similar to \cite{BNP} which is used in the proof of \Cref{thm:compactnessApplication} to verify the conditions of the compactness criterion \Cref{thm:compactCriterion}. Before stating the integration by parts formula, we start by giving some definitions and notations frequently used during the course of this section. \par
		
	Let $n$ be a given integer. We consider the function $f:[0,T]^n\times (\Rbb^{d})^n \rightarrow \Rbb$ of the form 
\begin{align}\label{f}
	f(s,z)=\prod_{j=1}^n f_j (s_j, z_j), \quad s = (s_1, \dots, s_n) \in [0,T]^n, \quad z = (z_1, \dots ,z_n) \in (\Rbb^{d})^n,
\end{align}
where $f_j : [0,T] \times \Rbb^{d} \rightarrow \Rbb$, $j=1,\dots, n$, are compactly supported smooth functions. Further, we deal with the function $\varkappa: [0,T]^n \rightarrow \Rbb$ which is of the form
\begin{align}\label{kappa}
	\varkappa (s) = \prod_{j=1}^n \varkappa_j (s_j), \quad s \in [0,T]^n,
\end{align}
with integrable factors $\varkappa_j : [0,T] \rightarrow \Rbb$, $j=1,\dots, n$.

Let $\alpha_j$ be a multi-index and $D^{\alpha_j}$ its corresponding differential operator. For $\alpha := (\alpha_1, \dots, \alpha_n) \in \Nbb_{0}^{d\times n}$ we define the norm $|\alpha | = \sum_{j=1}^n \sum_{k=1}^d \alpha_j^{(k)}$ and write 
\begin{align*}
	 D^{\alpha} f(s,z) = \prod_{j=1}^n D^{\alpha_j} f_j (s_j, z_j).
\end{align*}

Let $k$ be an arbitrary integer. Given $(s,z) = (s_1, \dots, s_{kn} ,z_1, \dots, z_n) \in [0,T]^{kn} \times (\Rbb^{d})^n$ and a shuffle permutation $\sigma \in \Scal(n,n)$ we define the shuffled functions
\begin{equation*}
	f_{\sigma}(s,z) := \prod_{j=1}^{kn} f_{[\sigma (j)]} (s_j, z_{[\sigma (j)]})
\end{equation*}
and 
\begin{equation*}
	\varkappa_{\sigma }(s) := \prod_{j=1}^{kn} \varkappa_{\lbrack \sigma(j)]}(s_j),
\end{equation*}
where $[j]$ is equal to $(j-in)$ if $(in+1)\leq j \leq (i+1)n$, $i=0, \dots, (k-1)$. For a multi-index $\alpha$, we define
\begin{align}\label{eq:PsiF}
	\Psi_{\alpha}^{f} (\theta, t, z, H, d) := \left( \prod_{k=1}^{d} \sqrt{(2\left\vert \alpha^{(k)}\right\vert)!} \right) \sum_{\sigma \in \Scal(n,n)} \int_{\Delta _{\theta, t}^{2n}} \left\vert f_{\sigma}(s,z)\right\vert \vert \Delta s \vert^{-H \left(1+\alpha_{[\sigma(\Delta)]}\right)} ds,
\end{align}
and
\begin{align}\label{eq:PsiKappa}
	\Psi_{\alpha}^{\varkappa}(\theta, t, H, d) := \left( \prod_{k=1}^{d} \sqrt{(2\left\vert \alpha^{(k)}\right\vert)!} \right) \sum_{\sigma \in \Scal(n,n)} \int_{\Delta_{\theta, t}^{2n}} \left\vert \varkappa_{\sigma}(s)\right\vert \vert \Delta s \vert^{-H \left(1+\alpha_{[\sigma(\Delta)]} \right)} ds,
\end{align}
where for any $a,b \in \Rbb$
\begin{small}
\begin{align*}
	&\vert \Delta s \vert^{H_k \left(a+b\cdot \alpha_{[\sigma(\Delta)]}^{(k)} \right)} :=  \vert s_1 \vert^{H_k \left(a + b\left(\alpha_{[\sigma(1)]}^{(k)} + \alpha_{[\sigma(2n)]}^{(k)}\right) \right)} \prod_{j=2}^{2n} \left\vert s_{j}-s_{j-1}\right\vert ^{H_k \left( a + b\left(\alpha _{\lbrack \sigma(j)]}^{(k)} + \alpha_{\lbrack \sigma(j-1)]}^{(k)} \right) \right)}, \\
	&\vert \Delta s \vert^{H \left(a+b\cdot \alpha_{[\sigma(\Delta)]} \right)} := \prod_{k=1}^d \vert \Delta s \vert^{H_k \left(a+b\cdot \alpha_{[\sigma(\Delta)]}^{(k)} \right)}.
\end{align*}
\end{small}

\begin{theorem}\label{thm:mainThmLocalTime} 
	Suppose the functions $\Psi_{\alpha }^{f}(\theta, t, z, H, d)$ and $\Psi_{\alpha }^{\varkappa}(\theta, t, H, d)$ defined in \eqref{eq:PsiF} and \eqref{eq:PsiKappa}, respectively, are finite. Then, 
	\begin{align}\label{eq:LambdaDef}
	\Lambda_{\alpha}^f (\theta, t, z) := (2\pi)^{-dn} \int_{(\Rbb^{d})^n} \int_{\Delta_{\theta,t}^n} \prod_{j=1}^n f_j (s_j, z_j)(-iu_j)^{\alpha_j} e^{-i\left\langle u_j, \Bhat_{s_j}^{d,H} - z_j \right\rangle }dsdu,
\end{align}
	where $\Bhat_{t}^{d,H} := \left( \frac{B_t^{H_1}}{\sqrt{\Kfrak_{H_1}}}, \dots, \frac{B_t^{H_d}}{\sqrt{\Kfrak_{H_d}}} \right)^\top$ and $\Kfrak_{H_k}$ is the constant in \Cref{lem:localNonDeterminism}, is a square integrable random variable in $L^{2}(\Omega)$ and
\begin{align}\label{eq:supestL}
	\EW{\left\vert \Lambda_{\alpha }^{f} (\theta, t, z)\right\vert^{2}} \leq \frac{T^{\frac{\vert \alpha \vert}{6}}}{(2\pi)^{dn}} \Psi_{\alpha}^{f} (\theta, t, z, H, d).  
\end{align}
	Furthermore,
\begin{align}\label{eq:intestL}
	&\Eabs{\int_{(\mathbb{R}^{d})^n}\Lambda _{\alpha }^{\varkappa f}(\theta, t, z)dz} \leq  \frac{T^{\frac{\vert \alpha \vert}{12}}}{\sqrt{2\pi}^{dn}} (\Psi _{\alpha }^{\varkappa }(\theta, t, H, d))^{\frac{1}{2}} \prod_{j=1}^n \left\Vert f_{j}\right\Vert_{L^{1}(\mathbb{R}^{d};L^{\infty}([0,T]))},
\end{align}
	and the integration by parts formula
	\begin{align}\label{eq:integrationByParts}
		\int_{\Delta_{\theta ,t}^n} D^{\alpha} f \left(s, \Bhat_s^{d,H} \right) ds = \int_{(\Rbb^{d})^n} \Lambda_{\alpha}^f(\theta, t, z) dz,
	\end{align}
	holds.
\end{theorem}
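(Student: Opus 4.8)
The plan is to regard $\Lambda_{\alpha}^{f}$ as an $f$-weighted spatial derivative of the (simplex) local time of the normalized process $\Bhat^{d,H}$, and to deduce all three assertions from its Fourier representation \eqref{eq:LambdaDef}, with the strong local non-determinism of \Cref{lem:localNonDeterminism} supplying the decisive Gaussian estimate. I would begin with the $L^2$-bound \eqref{eq:supestL}, which at the same time shows $\Lambda_{\alpha}^{f}(\theta,t,z)\in L^2(\Omega)$. Expanding $\Ebb[|\Lambda_{\alpha}^{f}|^2]$ from \eqref{eq:LambdaDef} produces an integral over $\Delta_{\theta,t}^n\times\Delta_{\theta,t}^n$ in the time variables $(s,s')$ and over $(\Rbb^d)^n\times(\Rbb^d)^n$ in the frequencies $(u,u')$, the only stochastic ingredient being the joint characteristic function $\Ebb[\exp(-i\langle u,\Bhat_{s}^{d,H}\rangle+i\langle u',\Bhat_{s'}^{d,H}\rangle)]$. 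Because $\Bhat^{d,H}$ is Gaussian with coordinates independent across the $d$ dimensions, this characteristic function equals the exponential of minus one half of a variance that splits as a sum over $k=1,\dots,d$, and hence factorizes over the $d$ coordinates.

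Next I would invoke the shuffle identity \eqref{shuffleIntegral} to rewrite the product of the two $n$-simplex time integrals as a sum over $\sigma\in\Scal(n,n)$ of a single integral over the ordered simplex $\Delta_{\theta,t}^{2n}$; this is precisely the source of the shuffle sum in the definition \eqref{eq:PsiF} of $\Psi_{\alpha}^{f}$. The crux is then the frequency integral on the ordered simplex: for fixed times $\theta<s_1<\cdots<s_{2n}<t$, \Cref{lem:localNonDeterminism}, applied coordinate by coordinate — the normalization by $\sqrt{\Kfrak_{H_k}}$ in the definition of $\Bhat^{d,H}$ being exactly what renders the local non-determinism constant equal to $1$ — bounds the covariance quadratic form in the $2n$ shuffled frequency variables below by a positive-definite form weighted by the consecutive gaps $|s_j-s_{j-1}|^{2H_k}$. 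This dominates the Gaussian weight from above, and integrating $\prod_j|u_j|^{|\alpha_j|}$ against it produces, coordinate by coordinate, the factorials $\prod_{k=1}^d(2|\alpha^{(k)}|)!$ and the gap powers $|\Delta s|^{-H(1+\alpha_{[\sigma(\Delta)]})}$ of \eqref{eq:PsiF}, while the elementary time scaling yields the prefactor $T^{|\alpha|/6}$; reassembling reproduces the right-hand side of \eqref{eq:supestL}. I expect this step — the Gaussian estimate driven by local non-determinism, together with the bookkeeping of the shuffle sum and the factorial/gap factors — to be the principal obstacle, the subtlety being that fractional Brownian motion has no independent increments, so the controlling factorized bound must be squeezed out of \Cref{lem:localNonDeterminism} rather than from independence.

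With \eqref{eq:supestL} in hand, the estimate \eqref{eq:intestL} would follow by Minkowski's integral inequality, $\Ebb|\int_{(\Rbb^d)^n}\Lambda_{\alpha}^{\varkappa f}(\theta,t,z)dz|\le\int_{(\Rbb^d)^n}(\Ebb|\Lambda_{\alpha}^{\varkappa f}|^2)^{1/2}dz$, applying \eqref{eq:supestL} with the weight $\varkappa$ in the role of the $f$-part, separating off the $z$-integration of $\prod_j|f_j(s_j,z_j)|$ into the norms $\Vert f_j\Vert_{L^1(\Rbb^d;L^\infty([0,T]))}$, and retaining the $\varkappa$-weighted simplex integral as $\Psi_{\alpha}^{\varkappa}$; the square root in $(\Psi_{\alpha}^{\varkappa})^{1/2}$ and the halved exponent $T^{|\alpha|/12}$ are exactly the footprint of this Cauchy--Schwarz step. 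Finally, the integration-by-parts identity \eqref{eq:integrationByParts} I would prove as a Fourier-inversion statement: representing each $D^{\alpha_j}f_j(s_j,\cdot)$ through its inverse Fourier transform so that the factor $(-iu_j)^{\alpha_j}$ carries the derivatives, and then interchanging the $dz$-, $du$- and $ds$-integrations — an interchange licensed by the absolute integrability that the finiteness of $\Psi_{\alpha}^{f}$ guarantees — collapses $\int_{(\Rbb^d)^n}\Lambda_{\alpha}^{f}(\theta,t,z)dz$ onto $\int_{\Delta_{\theta,t}^n}D^{\alpha}f(s,\Bhat_s^{d,H})ds$, which is the asserted formula.
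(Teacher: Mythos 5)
Your overall architecture coincides with the paper's: square the Fourier representation \eqref{eq:LambdaDef}, shuffle the two simplex integrals via \eqref{shuffleIntegral}, factorize the characteristic function over the $d$ coordinates by independence, estimate the resulting Gaussian frequency integral, then obtain \eqref{eq:intestL} by $\Ebb|X|\leq \Ebb[|X|^2]^{1/2}$ plus separation of the $L^1(\Rbb^d;L^\infty)$ norms, and \eqref{eq:integrationByParts} by Fourier inversion. The parts of your plan concerning \eqref{eq:intestL} are essentially identical to the paper's.

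There is, however, a genuine gap at the step you yourself single out as the crux. You claim that \Cref{lem:localNonDeterminism} ``bounds the covariance quadratic form in the $2n$ shuffled frequency variables below by a positive-definite form weighted by the consecutive gaps,'' i.e.\ essentially $u^\top\Sigma_k u\gtrsim\sum_j u_j^2|s_j-s_{j-1}|^{2H_k}$, after which the moment integral would factorize into one-dimensional Gaussian integrals. That is not what strong local non-determinism delivers: the conditional-variance bound controls only the \emph{last} coordinate given the others, and the diagonal lower bound you want holds (if at all) only in the increment variables $v_j=\sum_{l\geq j}u_l$, in which the monomial $\prod_j|u_j|^{\alpha_j}$ no longer factorizes, so the claimed coordinate-by-coordinate computation of $\prod_{k}(2|\alpha^{(k)}|)!$ and of the gap powers does not go through as described. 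The paper instead proceeds via a different chain: a substitution reducing the frequency integral to $\Ebb\bigl[\prod_j|\langle\Sigma_k^{-1/2}Z,\etilde_j\rangle|^{\alpha^{(k)}_{[\sigma(j)]}}\bigr]$ for standard Gaussian $Z$, the Brascamp--Lieb type inequality of \Cref{lem:LiWei} bounding this by the square root of a permanent, the permanent bound for positive semidefinite matrices which is the actual source of the factor $\prod_k(2|\alpha^{(k)}|)!$, \Cref{lem:CD} to identify the diagonal entries as inverse conditional variances, and \Cref{lem:determinant1} together with \Cref{lem:determinant2} and \Cref{lem:localNonDeterminism} to bound $\det\Sigma_k$ and those conditional variances by the consecutive gaps. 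None of this machinery appears in your sketch, and without it \eqref{eq:supestL} is not established. A secondary, smaller issue: in the integration-by-parts step the interchange of the $dz$-, $du$- and $ds$-integrations is \emph{not} licensed by the finiteness of $\Psi_\alpha^f$ (the inner $du$-integral in \eqref{eq:LambdaDef} is not absolutely convergent --- for $f\equiv1$ it is a Donsker delta); the paper has to truncate the frequency domain to a ball $B(0,R)$, pass to the limit in $L^2(\Omega)$, and only then apply dominated convergence and Fourier inversion.
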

\begin{proof}
	For notational simplicity we consider merely the case $\theta = 0$ and write $\Lambda_{\alpha }^{f} (t, z) := \Lambda_{\alpha }^{f} (0, t, z)$. For any integrable function $g:(\mathbb{R}^{d})^{n}\longrightarrow \mathbb{C}$ we have that
\begin{align*}
	&\left\vert \int_{(\Rbb^d)^{n}}g(u_{1},...,u_{n})du_{1}...du_n \right\vert ^{2} \\
	&\quad = \int_{(\Rbb^d)^{n}}g(u_{1},...,u_n)du_{1}...du_n\int_{(\Rbb^d)^{n}}\overline{g(u_{n+1},...,u_{2n})}du_{n+1}...du_{2n} \\
	&\quad =\int_{(\Rbb^d)^{n}}g(u_{1},...,u_n)du_{1}...du_n(-1)^{dn}\int_{(\Rbb^d)^{n}}\overline{g(-u_{n+1},...,-u_{2n})} du_{n+1}...du_{2n},
\end{align*}
	where the change of variables $(u_{n+1},...,u_{2n})\longmapsto(-u_{n+1},...,-u_{2n})$ was applied in the last equality. Thus,
\begin{align*}
	\left\vert \Lambda _{\alpha }^{f}(t,z)\right\vert ^{2} &= (2\pi )^{-2dn}(-1)^{dn}\int_{(\Rbb^d)^{2n}}\int_{\Delta_{0,t}^n}\prod_{j=1}^{n}f_{j}(s_{j},z_{j})(-iu_{j})^{\alpha_{j}}e^{-i\left\langle u_{j}, \Bhat^{d,H}_{s_{j}}-z_{j}\right\rangle }ds \\
	&\quad \times \int_{\Delta_{0,t}^n}\prod_{j=n+1}^{2n}f_{[j]}(s_{j},z_{[j]})(-iu_{j})^{\alpha_{\lbrack j]}}e^{-i\left\langle u_{j}, \Bhat^{d,H}_{s_{j}}-z_{[j]}\right\rangle}ds du \\
	&= (2\pi )^{-2dn}(-1)^{dn} ~ i^{\vert \alpha \vert} \sum_{\sigma \in \Scal(n,n)}\int_{(\Rbb^d)^{2n}}\left( \prod_{j=1}^{n}e^{-i\left\langle z_{j},u_{j}+u_{j+n}\right\rangle }\right)  \\
	&\quad \times \int_{\Delta _{0,t}^{2n}}f_{\sigma }(s,z) \left( \prod_{j=1}^{2n}u_{\sigma(j)}^{\alpha _{\lbrack \sigma (j)]}} \right) \exp \left\{- i \sum_{j=1}^{2n}\left\langle u_{\sigma (j)}, \Bhat_{s_{j}}^{d,H} \right\rangle \right\} ds du,
\end{align*}
	where we applied shuffling in the sense of \eqref{shuffleIntegral}. Taking the expectation on both sides together with the independence of the fractional Brownian motions $B^{H_k}$, $k=1,...,d$, yields that
\begin{align}\label{Lambda}
	&\EW{\left\vert \Lambda _{\alpha }^{f}(t,z)\right\vert ^{2}} \notag \\
	&\quad =(2\pi )^{-2dn}(-1)^{dn}~ i^{\vert \alpha \vert} \sum_{\sigma \in \Scal(n,n)}\int_{(\Rbb^d)^{2n}}\left( \prod_{j=1}^{n} e^{-i\left\langle z_{j},u_{j}+u_{j+n}\right\rangle }\right) \notag \\
	&\qquad \times \int_{\Delta _{0,t}^{2n}}f_{\sigma }(s,z) \left( \prod_{j=1}^{2n} u_{\sigma(j)}^{\alpha _{\lbrack \sigma (j)]}} \right) \exp\left\{ -\frac{1}{2} \Var{\sum_{j=1}^{2n} \left\langle u_{\sigma (j)}, \Bhat_{s_{j}}^{d,H} \right\rangle}\right\} ds du \notag \\
	&\quad = (2\pi )^{-2dn}(-1)^{dn} ~ i^{\vert \alpha \vert} \sum_{\sigma \in \Scal(n,n)}\int_{(\Rbb^d)^{2n}} \left(\prod_{j=1}^n e^{-i\left\langle z_j ,u_j + u_{j+n}\right\rangle}\right)\notag  \\
	&\qquad \times \int_{\Delta _{0,t}^{2n}}f_{\sigma }(s,z) \left( \prod_{j=1}^{2n} u_{\sigma(j)}^{\alpha _{\lbrack \sigma (j)]}} \right) \exp\left\{ -\frac{1}{2} \sum_{k=1}^{d} \Var{\sum_{j=1}^{2n}u_{\sigma(j)}^{(k)} \frac{B_{s_{j}}^{H_{k}}}{\sqrt{\Kfrak_{H_k}}}} \right\}ds du \notag \\
	&\quad =(2\pi )^{-2dn}(-1)^{dn} ~ i^{\vert \alpha \vert} \sum_{\sigma \in \Scal(n,n)}\int_{(\mathbb{R}^{d})^{2n}}\left( \prod_{j=1}^{n} e^{-i\left\langle z_{j},u_{j}+u_{j+n}\right\rangle }\right) \notag \\
	&\qquad \times \int_{\Delta _{0,t}^{2n}} f_{\sigma }(s,z) \left( \prod_{j=1}^{2n} u_{\sigma(j)}^{\alpha _{\lbrack \sigma (j)]}} \right) \prod_{k=1}^{d}\exp \left\{ -\frac{1}{2 \Kfrak_{H_k}} ( u_\sigma^{(k)})^\top \Sigma_{k} u_{\sigma}^{(k)} \right\} ds  du,
\end{align}
	where $u_{\sigma}^{(k)} = \left( u_{\sigma(1)}^{(k)}, \dots, u_{\sigma(2n)}^{(k)} \right)^\top$ and
\begin{equation*}
	\Sigma_{k} = \Sigma_{k}(s):= \left(\EW{B_{s_{i}}^{H_k}B_{s_{j}}^{H_k}}\right)_{1\leq i,j\leq 2n}.
\end{equation*}
	Moreover, we obtain for every $\sigma \in \Scal(n,n)$ that
\begin{align}\label{Lambda2}
	&\int_{\Delta _{0,t}^{2n}}\left\vert f_{\sigma }(s,z)\right\vert \int_{(\Rbb^d)^{2n}} \prod_{k=1}^{d}  \left( \left( \prod_{j=1}^{2n}\left\vert u_{\sigma(j)}^{(k)}\right\vert^{\alpha_{[\sigma (j)]}^{(k)}} \right) e^{ -\frac{1}{2 \Kfrak_{H_k}} (u_\sigma^{(k)})^\top \Sigma_k u_{\sigma}^{(k)}} \right) du ds \notag \\
	&\quad =\int_{\Delta _{0,t}^{2n}}\left\vert f_{\sigma }(s,z)\right\vert \prod_{k=1}^{d} \left( \int_{\Rbb^{2n}} \left(\prod_{j=1}^{2n}\left\vert u_{j}^{(k)}\right\vert ^{\alpha_{[\sigma (j)]}^{(k)}}\right) e^{ -\frac{1}{2} \left\langle \frac{\Sigma_k}{\Kfrak_{H_k}} u^{(k)},u^{(k)}\right\rangle } du^{(k)} \right) ds,
\end{align}
	where $u^{(k)} := \left( u_1^{(k)}, \dots, u_{2n}^{(k)} \right)^\top.$ For every $1 \leq k \leq d$ we have by using substitution that
\begin{align}\label{eq:GaussianSub}
	&\int_{\Rbb^{2n}} \left(\prod_{j=1}^{2n}\left\vert u_{j}^{(k)}\right\vert^{\alpha_{[\sigma (j)]}^{(k)}}\right) e^{-\frac{1}{2}\left\langle \frac{\Sigma_k}{\Kfrak_{H_k}} u^{(k)}, u^{(k)}\right\rangle} du^{(k)} \\
	&\quad =\frac{\Kfrak_{H_k}^n}{(\det \Sigma_{k})^{1/2}}\int_{\Rbb^{2n}} \left(\prod_{j=1}^{2n} \left \vert \left\langle \sqrt{\Kfrak_{H_k}} \Sigma_{k}^{-1/2}u^{(k)},\etilde_{j}\right\rangle \right\vert^{\alpha _{[\sigma (j)]}^{(k)}}\right) e^{ -\frac{1}{2}\left\langle u^{(k)},u^{(k)}\right\rangle} du^{(k)}. \notag
\end{align}
	Considering a standard Gaussian random vector $Z\sim \Ncal(0,\Id_{2n})$, we get that
\begin{align}\label{eq:GaussianExp}
	&\int_{\Rbb^{2n}} \left(\prod_{j=1}^{2n}\left\vert \left\langle \Sigma_{k}^{-1/2}u^{(k)},\etilde_{j} \right\rangle \right\vert^{\alpha_{[\sigma(j)]}^{(k)}}\right) e^{ -\frac{1}{2}\left\langle u^{(k)},u^{(k)}\right\rangle} du^{(k)} \\
	&\quad = (2\pi )^{n}\EW{\prod_{j=1}^{2n}\left\vert \left\langle \Sigma_k^{-1/2}Z,\etilde_{j}\right\rangle \right\vert^{\alpha _{[\sigma(j)]}^{(k)}}}. \notag
\end{align}
	Using a Brascamp-Lieb type inequality which is due to \Cref{lem:LiWei}, we further get that
\begin{align*}
	&\EW{\prod_{j=1}^{2n}\left\vert \left\langle \Sigma_k^{-1/2}Z, \etilde_{j}\right\rangle \right\vert^{\alpha _{[\sigma (j)]}^{(k)}}} \leq \sqrt{\perm{A_k}}=\sqrt{\sum_{\pi \in S_{2\left\vert \alpha^{(k)}\right\vert }}\prod_{i=1}^{2\left\vert \alpha^{(k)}\right\vert}a_{i,\pi (i)}^{(k)}},
\end{align*}
	where $\left\vert \alpha^{(k)}\right\vert :=\sum_{j=1}^{n}\alpha _{j}^{(k)}$ and $\perm{A_k}$ is the permanent of the covariance matrix $A_k =(a_{i,j}^{(k)})_{1\leq i,j \leq 2\left\vert \alpha^{(k)}\right\vert}$ of the Gaussian random vector
\begin{align*}
	\underset{\alpha _{[\sigma (1)]}^{(k)}\text{ times}}{\left( \underbrace{\left\langle \Sigma_k^{-1/2}Z,\etilde_{1}\right\rangle ,...,\left\langle
\Sigma_k^{-1/2}Z, \etilde_{1}\right\rangle}\right.}, \dots ,\underset{\alpha_{[\sigma (2n)]}^{(k)}\text{ times}}{\left.\underbrace{\left\langle \Sigma_k^{-1/2}Z, \etilde_{2n}\right \rangle ,...,\left\langle \Sigma_k^{-1/2}Z, \etilde_{2n}\right\rangle}\right)},
\end{align*}
	 and $S_{m}$ denotes the permutation group of size $m$. Using an upper bound for the permanent of positive semidefinite matrices which is due to \cite{AG}, we find that 
\begin{align}\label{PSD}
	\perm{A_k}=\sum_{\pi \in S_{2\left\vert \alpha^{(k)}\right\vert}}\prod_{i=1}^{2\left\vert \alpha^{(k)}\right\vert }a_{i,\pi (i)}^{(k)}\leq (2\left\vert \alpha^{(k)}\right\vert )!\prod_{i=1}^{2\left\vert \alpha^{(k)}\right\vert }a_{i,i}^{(k)}.
\end{align}
	Now let $\sum_{l=1}^{j-1}\alpha_{[\sigma(l)]}^{(k)}+1 \leq i \leq \sum_{l=1}^{j}\alpha_{[\sigma (l)]}^{(k)}$ for some fixed $j\in \{1,...,2n\}$. Then
\begin{align*}
	a_{i,i}^{(k)}=\EW{\left\langle \Sigma_k^{-1/2}Z,\etilde_j \right\rangle \left\langle \Sigma_k^{-1/2}Z,\etilde_j \right\rangle}.
\end{align*}
	Substitution gives moreover that
	\begin{small}
\begin{align}\label{eq:Covariation}
	&\EW{\left\langle \Sigma_k^{-1/2}Z, \etilde_{j}\right\rangle \left\langle \Sigma_{k}^{-1/2}Z, \etilde_j \right\rangle} 
	= (\det \Sigma_{k})^{1/2}\frac{1}{(2\pi )^{n}}\int_{\Rbb^{2n}}u_{j}^{2}\exp \left\lbrace -\frac{1}{2}\left\langle \Sigma_k u, u\right\rangle \right\rbrace du.
\end{align}
\end{small}
	Applying \Cref{lem:CD} we get
\begin{align}\label{eq:CDLemma2app}
	\int_{\Rbb^{2n}}u_{j}^{2}\exp \left\lbrace -\frac{1}{2}\left\langle \Sigma_k u, u\right\rangle \right\rbrace du &= \frac{(2\pi )^{(2n-1)/2}}{(\det \Sigma_k)^{1/2}}\int_{\Rbb} v^{2}\exp \left\lbrace -\frac{1}{2}v^{2} \right\rbrace dv\frac{1}{\sigma _{j}^{2}} \notag \\
	&= \frac{(2\pi )^n}{(\det \Sigma_k)^{1/2}}\frac{1}{\sigma _{j}^{2}},
\end{align}
	where $\sigma_{j}^{2}:=\Var{B_{s_{j}}^{H_k}\left\vert B_{s_{1}}^{H_k},...,B_{s_{2n}}^{H_k}\text{ without }B_{s_{j}}^{H_k} \right.}.$ \par 
Subsequently, we aim at the application of the strong local non-determinism property of the fractional Brownian motions, cf. \Cref{lem:localNonDeterminism}, i.e. for all $0<r<t \leq T$ exists a constant $\Kfrak_{H_k}$ depending on $H_k$ and $T$ such that
\begin{equation*}
	\Var{B_{t}^{H_k}\left\vert B_{s}^{H_k},\left\vert t-s\right\vert \geq r \right. } \geq \Kfrak_{H_k} r^{2H_k}.
\end{equation*}
	Hence, we get due to \Cref{lem:determinant1} and \Cref{lem:determinant2} that
\begin{equation}\label{eq:determinant1}
	(\det \Sigma_k(s))^{1/2}\geq \Kfrak_{H_k}^{\frac{(2n-1)}{2}}\left\vert s_{1}\right\vert^{H_k}\left\vert s_{2}-s_{1}\right\vert ^{H_k}...\left\vert s_{2n}-s_{2n-1}\right\vert ^{H_k},
\end{equation}
	and
\begin{align*}
	\sigma_1^{2} &\geq \Kfrak_{H_k} \left\vert s_2 - s_1 \right\vert^{2H_k}, \\
	\sigma_j^{2} &\geq \Kfrak_{H_k} \min \left\lbrace \left\vert s_{j}-s_{j-1}\right\vert^{2H_k},\left\vert s_{j+1}-s_{j}\right\vert ^{2H_k}\right\rbrace, ~ 2\leq j \leq 2n-1, \\
	\sigma_{2n}^{2} &\geq \Kfrak_{H_k} \left\vert s_{2n}-s_{2n-1}\right\vert^{2H_k}.
\end{align*}
	Thus,
\begin{align}\label{eq:boundVariance}
	&\prod_{j=1}^{2n}\sigma _{j}^{-2\alpha _{[\sigma (j)]}^{(k)}} \leq \Kfrak_{H_k}^{-2 \vert \alpha^{(k)} \vert} T^{4 H_k \vert \alpha^{(k)} \vert} \vert \Delta s \vert^{-2H_k \alpha_{[\sigma(\Delta)]}^{(k)}}.
\end{align}
	Concluding from \eqref{PSD}, \eqref{eq:Covariation}, \eqref{eq:CDLemma2app}, and \eqref{eq:boundVariance} we have that
\begin{align*}
	\perm{A_k} &\leq \left(2\left\vert \alpha^{(k)}\right\vert\right)! \prod_{i=1}^{2\left\vert \alpha ^{(k)}\right\vert }a_{i,i}^{(k)} \\
	&\leq \left(2\left\vert \alpha^{(k)}\right\vert \right)! \prod_{j=1}^{2n} \left( (\det \Sigma_k)^{1/2} \frac{1}{(2\pi )^{n}}\frac{(2\pi )^{n}}{(\det \Sigma_k)^{1/2}}\frac{1}{\sigma _{j}^{2}} \right)^{\alpha _{[\sigma (j)]}^{(k)}} \\
	&\leq \left(2\left\vert \alpha^{(k)}\right\vert \right)! \Kfrak_{H_k}^{-2 \vert \alpha^{(k)} \vert} T^{4 H_k \vert \alpha^{(k)} \vert} \vert \Delta s \vert^{-2H_k \alpha_{[\sigma(\Delta)]}^{(k)}}.
\end{align*}
	Consequently,
\begin{align*}
	&\EW{\prod_{j=1}^{2n}\left\vert \left\langle \Sigma_k^{-1/2}Z, \etilde_{j} \right\rangle\right\vert^{\alpha _{\lbrack \sigma (j)]}^{(k)}}} \leq \sqrt{(2\left\vert \alpha^{(k)}\right\vert )!} \Kfrak_{H_k}^{-\vert \alpha^{(k)} \vert} T^{2 H_k \vert \alpha^{(k)}\vert} \vert \Delta s \vert^{-H_k \alpha_{[\sigma(\Delta)]}^{(k)}}.
\end{align*}
	Therefore we get from \eqref{Lambda}, \eqref{Lambda2}, \eqref{eq:GaussianSub}, \eqref{eq:GaussianExp}, and \eqref{eq:determinant1} that
\begin{align*}
	&\EW{\left\vert \Lambda_{\alpha }^{f}(t,z)\right\vert ^{2}} \\
	&\quad \leq (2\pi)^{-2dn} \sum_{\sigma \in \Scal(n,n)}\int_{\Delta _{0,t}^{2n}}\left\vert f_{\sigma }(s,z)\right\vert \prod_{k=1}^{d} \left( \int_{\Rbb^{2n}}\left\vert u^{(k)}\right\vert^{\alpha ^{(k)}} e^{ -\frac{1}{2\Kfrak_{H_k}}\left\langle \Sigma_k u^{(k)},u^{(k)}\right\rangle} du^{(k)} \right) ds \\
	&\quad \leq (2\pi)^{-dn} \sum_{\sigma \in \Scal(n,n)} \int_{\Delta _{0,t}^{2n}} \left\vert f_{\sigma }(s,z)\right\vert \prod_{k=1}^{d} \left( \frac{\Kfrak_{H_k}^{n+\vert \alpha^{(k)}\vert}}{\left( \det \Sigma_k(s) \right)^{\frac{1}{2}}} \EW{ \prod_{j=1}^{2n} \left\vert \left\langle \Sigma_k^{-\frac{1}{2}} Z, \etilde_j \right\rangle \right\vert^{\alpha_{\sigma(j)}^{(k)}}} \right) ds \\
	&\quad \leq (2\pi)^{-dn} \sum_{\sigma \in \Scal(n,n)}\int_{\Delta _{0,t}^{2n}}\left\vert f_{\sigma }(s,z)\right\vert  \left( \prod_{k=1}^d \vert \Delta s \vert^{-H_k} \Kfrak_{H_k}^{\vert \alpha^{(k)}\vert + \frac{1}{2}} \right)\\
	&\qquad \times \prod_{k=1}^{d} \left( \sqrt{(2\left\vert \alpha^{(k)}\right\vert )!} \Kfrak_{H_k}^{-\vert \alpha^{(k)} \vert} T^{2 H_k \vert \alpha^{(k)} \vert} \vert \Delta s \vert^{-H_k \alpha_{[\sigma(\Delta)]}^{(k)}} \right) ds \\
	&\quad \leq (2\pi)^{-dn} T^{\frac{\vert \alpha \vert}{6}} \left( \prod_{k=1}^{d} \sqrt{\Kfrak_{H_k}} \sqrt{(2\left\vert \alpha^{(k)}\right\vert )!} \right) \sum_{\sigma \in \Scal(n,n)} \int_{\Delta_{0,t}^{2n}} \left\vert f_{\sigma }(s,z)\right\vert \vert \Delta s \vert^{- H \left(1 +  \alpha_{[\sigma(\Delta)]} \right)} ds.
\end{align*}
	Since $\sup_{k \geq 1} \Kfrak_{H_k} \in (0,1)$, inequality \eqref{eq:supestL} holds. \par 
	Next we prove the estimate \eqref{eq:intestL}. With inequality \eqref{eq:supestL}, we get that 
\begin{align*}
	&\Eabs{\int_{(\Rbb^{d})^n}\Lambda_{\alpha}^{\varkappa f}(\theta ,t,z)dz} \leq \int_{(\Rbb^{d})^n} \Ep{\Lambda_{\alpha}^{\varkappa f}(\theta ,t,z)}{2} dz \\
	&\quad \leq \frac{T^{\frac{\vert \alpha \vert}{12}}}{\sqrt{2\pi}^{dn}} \int_{(\Rbb^{d})^n}(\Psi_{\alpha }^{\varkappa f}(\theta,t,z,H,d))^{\frac{1}{2}}dz.
\end{align*}
	Taking the supremum over $[0,T]$ with respect to each function $f_{j}$, i.e.
\begin{equation*}
	\left\vert f_{[\sigma (j)]}(s_{j},z_{[\sigma (j)]})\right\vert \leq \sup_{s_{j}\in [0,T]}\left\vert f_{[\sigma (j)]}(s_{j},z_{[\sigma(j)]})\right\vert , ~ j=1,...,2n,
\end{equation*}
	yields that
\begin{align*}
	&\Eabs{\int_{(\Rbb^{d})^n}\Lambda _{\alpha}^{\varkappa f}(\theta ,t,z)dz}  \\
	&\quad \leq \frac{T^{\frac{\vert \alpha \vert}{12}}}{\sqrt{2\pi}^{dn}} \max_{\sigma \in \Scal(n,n)}\int_{(\Rbb^{d})^n}\left( \prod_{j=1}^{2n}\left\Vert f_{[\sigma (j)]}(\cdot,z_{[\sigma (j)]})\right\Vert _{L^{\infty }([0,T])}\right)^{\frac{1}{2}}dz \\
	&\qquad \times \left(\prod_{k=1}^{d}\sqrt{(2\left\vert \alpha^{(k)}\right\vert )!}\sum_{\sigma \in \Scal(n,n)}\int_{\Delta _{\theta,t}^{2n}}\left\vert \varkappa_{\sigma }(s)\right\vert \vert \Delta s \vert^{- H \left( 1 + \alpha_{[\sigma (\Delta)]} \right)} ds \right)^{\frac{1}{2}} \\
	&\quad = \frac{T^{\frac{\vert \alpha \vert}{12}}}{\sqrt{2\pi}^{dn}} \max_{\sigma \in \Scal(n,n)}\int_{(\Rbb^{d})^n}\left( \prod_{j=1}^{2n}\left\Vert f_{[\sigma (j)]}(\cdot,z_{[\sigma (j)]})\right\Vert _{L^{\infty }([0,T])}\right)^{\frac{1}{2}} dz ~(\Psi _{\alpha }^{\varkappa }(\theta, t, H, d))^{\frac{1}{2}} \\
	&\quad = \frac{T^{\frac{\vert \alpha \vert}{12}}}{\sqrt{2\pi}^{dn}} \int_{(\Rbb^{d})^n}\prod_{j=1}^{n}\left\Vert f_{j}(\cdot ,z_{j})\right\Vert _{L^{\infty }([0,T])}dz ~ (\Psi_{\alpha }^{\varkappa }(\theta, t, H, d))^{\frac{1}{2}} \\
	&\quad = \frac{T^{\frac{\vert \alpha \vert}{12}}}{\sqrt{2\pi}^{dn}} \left( \prod_{j=1}^{n}\left\Vert f_{j} \right\Vert_{L^{1}(\Rbb^{d};L^{\infty }([0,T]))} \right)  (\Psi_{\alpha }^{\varkappa }(\theta, t, H, d))^{\frac{1}{2}}.
\end{align*}
	Finally, we show the integration by parts formula \eqref{eq:integrationByParts}. Note that \emph{a priori} one cannot interchange the order of integration in \eqref{eq:LambdaDef}, since e.g. for $m=1$, $f \equiv 1$ one gets an integral of the Donsker-Delta function which is not a random variable in the usual sense. Therefore, we define for $R>0$, 
\begin{align*}
	\Lambda^f_{\alpha,R} (\theta,t,z) := (2\pi)^{-dn} \int_{B(0,R)} \int_{\Delta^n_{\theta,t}} \prod_{j=1}^n f_j(s_j,z_j) (-iu_j)^{\alpha_j} e^{-i \langle u_j, \Bhat_{s_j}^{d,H}-z_j\rangle } ds dv,
\end{align*}
	where $B(0,R):=\{v\in (\Rbb^d)^n : \, |v|<R\}$. This yields
\begin{align*}
	|\Lambda^f_{\alpha,R} (\theta,t,z)| \leq C_{R} \int_{\Delta^n_{\theta,t}} \prod_{j=1}^n |f_j(s_j, z_j)| ds 
\end{align*}
	for a sufficient constant $C_R$. Under the assumption that the above right-hand side is integrable over $(\Rbb^d)^n$, similar computations as above show that $\Lambda^f_{\alpha,R}(\theta,t,z)\to \Lambda^f_{\alpha} (\theta,t,z)$ in $L^2(\Omega)$ as $R\to \infty$ for all $\theta,t$ and $z$. By Lebesgue's dominated convergence theorem and the fact that the Fourier transform is an automorphism on the Schwarz space, we obtain
\begin{align*}
	&\int_{(\Rbb^d)^n}  \Lambda^f_{\alpha}(\theta,t,z) dz = \lim_{R\to \infty} \int_{(\Rbb^d)^n}  \Lambda^f_{\alpha, R} (\theta,t,z)dx\\
	&\quad = \lim_{R\to \infty} (2\pi)^{-dn}\int_{(\Rbb^d)^n} \int_{B(0,R)} \int_{\Delta_{\theta,t}^n}  \prod_{j=1}^n f_j(s_j, z_j) (-iu_j)^{\alpha_j} e^{-i  \left\langle u_j, \Bhat_{s_j}^{d,H}-z_j \right\rangle} dzduds\\
	&\quad = \lim_{R\to \infty}   \int_{\Delta_{\theta,t}^n}   \int_{B(0,R)} (2\pi)^{-dn} \int_{(\Rbb^d)^n}   \prod_{j=1}^n f_j(s_j, z_j)e^{i  \langle u_j,z_j \rangle} dz (-iu_j)^{\alpha_j} e^{-i  \left\langle u_j, \Bhat_{s_j}^{d,H} \right\rangle} duds\\
	&\quad = \lim_{R\to \infty} \int_{\Delta_{\theta,t}^n}    \int_{B(0,R)}  \prod_{j=1}^n \widehat{f}_j(s,-u_j)  (-iu_j)^{\alpha_j} e^{-i \left\langle u_j, \Bhat_{s_j}^{d,H} \right\rangle} duds\\
	&\quad = \int_{\Delta_{\theta,t}^n} D^{\alpha} f\left(s,\Bhat_s^{d,H}\right)ds
\end{align*}
which is exactly the integration by parts formula \eqref{eq:integrationByParts}.
\end{proof}

Applying \Cref{thm:mainThmLocalTime} we obtain the following crucial estimate (compare \cite{ABP18}, \cite{ACHP}, \cite{BNP}, and \cite{BOPP.17}):

\begin{proposition}\label{prop:mainestimate1}
	Let the functions $f$ and $\varkappa $ be defined as in \eqref{f} and \eqref{kappa}, respectively. Further, let $ 0\leq \theta' <\theta <t \leq T$ and for some $m \geq 1$
\begin{equation*}
	\varkappa_{j}(s)=(K_{H_{m}}(s,\theta )-K_{H_{m}}(s,\theta'))^{\varepsilon_{j}},~\theta <s<t,
\end{equation*}
for every $j=1,...,n$ with $(\varepsilon_{1},...,\varepsilon_n)\in \{0,1\}^{n}$. Let $\alpha \in (\Nbb_{0}^{d})^{n}$ be a multi-index. Assume there exists $\delta $ such that 
\begin{align}\label{eq:assumptionSumAlpha}
-\sum_{k=1}^{d} H_k \left(1+2\alpha _{j}^{(k)}\right)+\left(H_{m} - \frac{1}{2} - \gamma_{m}\right)\geq \delta >-1
\end{align}
	for all $j= 1, \dots n$ and $d \geq 1$, where $\gamma_m \in (0,H_m)$ is sufficiently small. Then there exist constants $C_T$ (depending on $T$) and $K_{d,H}$ (depending on $d$ and $H$), such that for any $0\leq \theta <t \leq T$ we have
\begin{align*}
	&\Eabs{\int_{\Delta _{\theta ,t}^{n}}\left( \prod_{j=1}^{n} D^{\alpha_{j}}f_{j}(s_{j}, \Bhat_{s_{j}})\varkappa _{j}(s_{j})\right) ds}  \\
	&~ \leq \frac{K_{d,H}^{n} \cdot T^{\frac{\vert \alpha \vert}{12}}}{\sqrt{2\pi}^{dn}} \left( C_T \left( \frac{\theta -\theta'}{\theta \theta'}\right)^{\gamma_m} \theta^{(H_m-\frac{1}{2} - \gamma_m)}\right)^{\sum_{j=1}^{n} \varepsilon_j} \prod_{j=1}^n \left\Vert f_{j}(\cdot ,z_{j})\right\Vert _{L^{1}(\mathbb{R}^{d};L^{\infty }([0,T]))} \\
	&\quad \times \frac{\left(\prod_{k=1}^{d} \left(2\left\vert \alpha ^{(k)}\right\vert\right)!\right)^{\frac{1}{4}}(t-\theta )^{-\sum_{k=1}^{d} H_k \left(n+2\left\vert \alpha^{(k)}\right\vert \right)+\left(H_m-\frac{1}{2}-\gamma_m\right)\sum_{j=1}^n \varepsilon _{j}+n}}{\Gamma(2n - \sum_{k=1}^{d}H_k(2n+4\left\vert \alpha^{(k)}\right\vert )+2(H_m - \frac{1}{2}-\gamma_m)\sum_{j=1}^n \varepsilon _{j})^{\frac{1}{2}}}.
\end{align*}
\end{proposition}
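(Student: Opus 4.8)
The plan is to reduce the statement to the local-time estimate \eqref{eq:intestL} of \Cref{thm:mainThmLocalTime} and then to devote the real effort to the deterministic quantity $\Psi_\alpha^\varkappa$ from \eqref{eq:PsiKappa}. First I would note that each weight $\varkappa_j(s)=(K_{H_m}(s,\theta)-K_{H_m}(s,\theta'))^{\varepsilon_j}$ depends only on the time variable $s_j$ and not on the spatial variable $z_j$. Setting $\widetilde f_j(s_j,z_j):=\varkappa_j(s_j)f_j(s_j,z_j)$, the product $\widetilde f=\prod_{j=1}^n\widetilde f_j$ is again of the form \eqref{f}, and since the spatial operator $D^{\alpha_j}$ commutes with multiplication by $\varkappa_j(s_j)$ we have $D^\alpha\widetilde f(s,z)=\varkappa(s)D^\alpha f(s,z)$. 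The integration-by-parts formula \eqref{eq:integrationByParts} applied to $\widetilde f$ therefore gives
\[
\int_{\Delta_{\theta,t}^n}\prod_{j=1}^n D^{\alpha_j}f_j\!\left(s_j,\Bhat_{s_j}^{d,H}\right)\varkappa_j(s_j)\,ds=\int_{(\Rbb^d)^n}\Lambda_\alpha^{\varkappa f}(\theta,t,z)\,dz,
\]
and the estimate \eqref{eq:intestL} bounds the expected modulus of the right-hand side by
\[
\frac{T^{\frac{|\alpha|}{12}}}{\sqrt{2\pi}^{dn}}\bigl(\Psi_\alpha^\varkappa(\theta,t,H,d)\bigr)^{\frac12}\prod_{j=1}^n\left\Vert f_j\right\Vert_{L^1(\Rbb^d;L^\infty([0,T]))}.
\]
This already produces the prefactor $T^{|\alpha|/12}\sqrt{2\pi}^{-dn}$ and the product of $L^1(L^\infty)$-norms, so the whole task collapses to estimating $\Psi_\alpha^\varkappa$.

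The first stage of that estimate is to bound the kernel differences in $|\varkappa_\sigma(s)|=\prod_{j=1}^{2n}|K_{H_m}(s_j,\theta)-K_{H_m}(s_j,\theta')|^{\varepsilon_{[\sigma(j)]}}$. Here I would invoke \Cref{lem:iterativeInt}, which supplies a bound of the shape
\[
\left|K_{H_m}(s,\theta)-K_{H_m}(s,\theta')\right|\leq C_T\left(\frac{\theta-\theta'}{\theta\theta'}\right)^{\gamma_m}\theta^{H_m-\frac12-\gamma_m}(s-\theta)^{H_m-\frac12-\gamma_m}
\]
for $\theta'<\theta<s$ and $\gamma_m\in(0,H_m)$ small. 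Because every index of $\{1,\dots,n\}$ occurs exactly twice under $[\sigma(\cdot)]$, the $s$-independent prefactor accumulates to the power $2\sum_j\varepsilon_j$, reproducing after the final square root the factor $\bigl(C_T(\tfrac{\theta-\theta'}{\theta\theta'})^{\gamma_m}\theta^{H_m-1/2-\gamma_m}\bigr)^{\sum_j\varepsilon_j}$ of the statement, while a position factor $\prod_{j=1}^{2n}(s_j-\theta)^{(H_m-1/2-\gamma_m)\varepsilon_{[\sigma(j)]}}$ is left inside the simplex integral.

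The second stage is to evaluate the remaining singular simplex integral by iterated integration over the ordered variables. Since $\theta<s_{j-1}<s_j$ one has $(s_j-\theta)\geq s_j-s_{j-1}$, and as $H_m-\tfrac12-\gamma_m<0$ this gives $(s_j-\theta)^{H_m-1/2-\gamma_m}\leq(s_j-s_{j-1})^{H_m-1/2-\gamma_m}$, so each position factor may be absorbed into the power of its own consecutive gap. The weight $|\Delta s|^{-H(1+\alpha_{[\sigma(\Delta)]})}$ factorizes over dimensions and gaps, and the total gap exponent at the $j$-th gap becomes $-\sum_k H_k(1+\alpha_{[\sigma(j)]}^{(k)}+\alpha_{[\sigma(j-1)]}^{(k)})+(H_m-\tfrac12-\gamma_m)\varepsilon_{[\sigma(j)]}$; averaging the hypothesis \eqref{eq:assumptionSumAlpha} over the two indices $[\sigma(j)],[\sigma(j-1)]$ and using $\varepsilon_{[\sigma(j)]}\leq1$ bounds this below by $\delta>-1$, guaranteeing convergence of each of the $2n$ successive integrals. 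Applying the Euler beta-integral $\int_a^b(s-a)^p(b-s)^q\,ds=(b-a)^{p+q+1}\beta(p+1,q+1)$ iteratively (the Dirichlet simplex formula) then yields a power of $(t-\theta)$ and a quotient of Gamma functions; summing the gap exponents gives $-\sum_k H_k(2n+4|\alpha^{(k)}|)+2(H_m-\tfrac12-\gamma_m)\sum_j\varepsilon_j$, since each dimension contributes $2n$ ones and meets every $\alpha^{(k)}$-index twice and $\sum_{j=1}^{2n}\varepsilon_{[\sigma(j)]}=2\sum_j\varepsilon_j$. After the final square root this reproduces the stated exponent of $(t-\theta)$ and, up to bounded corrections absorbed into the constant, the Gamma factor $\Gamma(\cdots)^{-1/2}$. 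Finally $\prod_k\sqrt{(2|\alpha^{(k)}|)!}$ yields the $(\prod_k(2|\alpha^{(k)}|)!)^{1/4}$, and the shuffle count $\#\Scal(n,n)=\binom{2n}{n}\leq4^n$ together with the numerical beta-constants is absorbed into $K_{d,H}^n$.

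I expect the bookkeeping of this last simplex estimate to be the main obstacle. One must check that distributing the position factors onto the individual gaps does not destroy integrability — precisely the role of \eqref{eq:assumptionSumAlpha}, which has to be converted, by averaging over adjacent indices, into a lower bound $\delta>-1$ for each of the $2n$ gap exponents — and one must track, simultaneously in all $d$ dimensions, how the exponents accumulate so that the argument of the final Gamma function is exactly $2n-\sum_k H_k(2n+4|\alpha^{(k)}|)+2(H_m-\tfrac12-\gamma_m)\sum_j\varepsilon_j$. By contrast, the probabilistic core — the Gaussian and permanent computations, the local-non-determinism lower bounds carried by the constants $\Kfrak_{H_k}$, and the determinant estimates — has already been discharged inside \Cref{thm:mainThmLocalTime} and does not recur at this stage.
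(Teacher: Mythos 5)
Your proposal follows essentially the same route as the paper: apply the integration-by-parts formula \eqref{eq:integrationByParts} to reduce to $\int\Lambda_\alpha^{\varkappa f}\,dz$, invoke the estimate \eqref{eq:intestL} of \Cref{thm:mainThmLocalTime}, and then bound $\Psi_\alpha^\varkappa$ via the pointwise kernel-difference bound and iterated Dirichlet/beta integration over the simplex, tracking the accumulated gap exponents exactly as the paper does through \Cref{lem:iterativeInt} (your variant of absorbing the factors $(s_j-\theta)^{H_m-\frac{1}{2}-\gamma_m}$ into the consecutive gaps is a harmless reshuffling of the same computation). The exponent bookkeeping, the use of \eqref{eq:assumptionSumAlpha} to secure integrability of each gap, and the absorption of the shuffle count into $K_{d,H}^n$ all match the paper's argument.
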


In order to prove this result we need the following two auxiliary results.

\begin{lemma}\label{lem:doubleint} 
	Let $H \in \left(0,\frac{1}{2}\right)$ and $t\in [0,T]$ be fixed. Then, there exists $\beta \in \left(0,\frac{1}{2}\right)$ and a constant $C>0$ independent of $H$ such that 
\begin{align*}
	\int_0^t \int_0^t \frac{|K_H(t, \theta^{\prime}) - K_H(t,\theta)|^2}{|\theta^{\prime}-\theta|^{1+2\beta}}d\theta d\theta^{\prime} \leq C < \infty.
\end{align*}
\end{lemma}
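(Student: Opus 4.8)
The plan is to use the explicit form \eqref{eq:kernel} of $K_H$ to localise the analysis of $s\mapsto K_H(t,s)$ and then reduce the estimate to the Gagliardo seminorm of a pure power singularity. By symmetry of the integrand in $(\theta,\theta')$ it suffices to integrate over $\{0<\theta<\theta'<t\}$. Differentiating \eqref{eq:kernel} in the second argument shows that $s\mapsto K_H(t,s)$ is $C^1$ on every compact subinterval of $(0,t)$, whereas near the endpoints it develops integrable power singularities of order $H-\tfrac12\in(-\tfrac12,0)$: one checks $K_H(t,s)\sim c_H\,(t-s)^{H-\frac12}$ as $s\uparrow t$ and, analogously, a singularity of order $s^{H-\frac12}$ as $s\downarrow 0$. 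Thus $K_H(t,\cdot)$ is smooth in the interior and the entire difficulty sits in two boundary layers at $s=0$ and $s=t$.

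I would then split $[0,t]$ into an interior piece $[\delta,t-\delta]$ and the two boundary layers. On the interior the mean value theorem gives $|K_H(t,\theta')-K_H(t,\theta)|\le \big(\sup_{[\delta,t-\delta]}|\partial_s K_H(t,\cdot)|\big)\,|\theta'-\theta|$, so the contribution is dominated by $\int\!\!\int|\theta'-\theta|^{1-2\beta}\,d\theta\,d\theta'$, which converges for every $\beta<1$ and is harmless. In the boundary layer near $s=t$, integrating $\partial_s K_H(t,\cdot)$ and discarding the lower-order factor $(s/t)^{1/2-H}\le1$ together with the contribution of the second term in \eqref{eq:kernel} yields $|K_H(t,\theta')-K_H(t,\theta)|\lesssim c_H\,|(t-\theta')^{H-1/2}-(t-\theta)^{H-1/2}|$. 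Passing to $a=t-\theta$, $b=t-\theta'$ and writing $\rho:=H-\tfrac12$, a scaling (homogeneity) substitution collapses the model seminorm to a one-dimensional Beta-type integral,
\begin{align*}
	\int_0^t\!\!\int_0^t \frac{|a^{\rho}-b^{\rho}|^{2}}{|a-b|^{1+2\beta}}\,da\,db
	= \frac{t^{\,2(H-\beta)}}{H-\beta}\int_0^1 \frac{(x^{\rho}-1)^{2}}{(1-x)^{1+2\beta}}\,dx ,
\end{align*}
which is finite precisely when $\beta<H$ (the integrability constraint coming from the endpoint $x=0$, i.e. from the singularity of the kernel). The boundary layer at $s=0$ is treated in the same way.

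Collecting the three pieces, the double integral is finite for every fixed $H$ and $t$ whenever $\beta\in\big(0,\min\{H,\tfrac12\}\big)$, the constant being built from $c_H^{2}$, the interior Lipschitz bound, the Beta-type integral, and the factor $t^{2(H-\beta)}\le\max\{1,T\}$. I expect the genuine obstacle to be the uniformity as $H\downarrow 0$. There the admissible window $\beta<H$ collapses, the inner integral above behaves like $\tfrac{1}{2H}$ and the prefactor like $\tfrac{1}{H-\beta}$, so that the singular contribution is of order $c_H^{2}/\big(H(H-\beta)\big)$. The one structural feature that saves the estimate is the normalisation: since the Beta function satisfies $\mathrm{B}(1-2H,H+\tfrac12)\to 2$, one has $c_H^{2}\sim H$ as $H\downarrow0$, which cancels one factor of $H^{-1}$ and reduces the singular contribution to order $(H-\beta)^{-1}$. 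Tracking this $c_H^{2}\sim H$ cancellation carefully against the degeneration of the seminorm is the crux of the argument; it is exactly this balance that the compactness application relies on, through $\lambda/\sqrt{H}\in\ell^{1}$, to keep the corresponding series summable.
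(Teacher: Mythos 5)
Your strategy is essentially the right one and, at its core, coincides with the paper's: both arguments reduce the difference $K_H(t,\theta)-K_H(t,\theta')$ to the power singularities $\theta^{H-\frac12}$ and $(t-\theta)^{H-\frac12}$ and then evaluate Beta-type integrals under the constraint $\beta<H$. The organization differs, though. The paper does not localize into an interior piece and two boundary layers; instead it proves a single global pointwise bound, valid for all $0<\theta'<\theta<t$ and $\gamma\in(0,H)$,
\begin{align*}
K_H(t,\theta)-K_H(t,\theta')\;\lesssim\; c_H\,\frac{(\theta-\theta')^{\gamma}}{(\theta\theta')^{\gamma}}\,\theta^{H-\frac12-\gamma}(t-\theta)^{H-\frac12-\gamma},
\end{align*}
obtained by splitting \eqref{eq:kernel} as $c_H[f_t+(\tfrac12-H)g_t]$ and applying the elementary inequality $\frac{y^{-\alpha}-x^{-\alpha}}{(x-y)^{\gamma}}\le C\,y^{-\alpha-\gamma}$ to both pieces; the double integral then collapses to a product of two Beta functions. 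This buys two things your sketch does not deliver: (i) there are no cross terms to glue --- in your decomposition the case where $\theta$ lies in $[\delta,t-\delta]$ and $\theta'$ in a boundary layer is covered by neither the interior Lipschitz bound nor the boundary model, and while this is fixable it is a real omission; (ii) the same pointwise bound is recycled in \Cref{lem:iterativeInt}, so the paper gets more mileage out of one estimate. Your exact scaling identity for the model seminorm is correct (and the attribution of the constraint $\beta<H$ should go to the radial $v$-integral rather than to the endpoint $x=0$, which only produces the $\tfrac1{2H}$ factor), but these are organizational rather than substantive differences.

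The one genuine gap is the claimed $H$-independence of the constant, and you have in fact quantified the problem without resolving it: after the cancellation $c_H^2\sim H$ your bound is of order $(H-\beta)^{-1}$, and since admissibility forces $0<\beta<H$, this is at least of order $H^{-1}$ and cannot be made uniform as $H\downarrow 0$. Deferring this as ``the crux'' leaves the strongest part of the statement unproven. You should be aware, however, that the paper's own proof does not close this either: it absorbs all $H$-dependent factors ($c_H^2$, the ratio $\Gamma(2\gamma-2\beta)\Gamma(1-2\gamma)/\Gamma(1-2\beta)$, and $\Gamma(2H-2\gamma)\Gamma(2H-4\gamma-2\beta)/\Gamma(4H-6\gamma-2\beta)$) into ``$\lesssim$'' and concludes only that the integral is finite ``for sufficiently small $\gamma$ and $\beta$'', i.e.\ for each fixed $H$ with $\beta<H-2\gamma$. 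So the honest conclusion of either argument is the fixed-$H$ statement with $\beta=\beta(H)\in(0,H)$; if the uniform constant is genuinely needed downstream, it has to be extracted differently (e.g.\ by keeping the explicit $H$-dependence and letting the weights $\lambda_m/\sqrt{H_m}\in\ell^1$ and the free parameters $\gamma_m$ absorb it, as you correctly anticipate at the end of your sketch).
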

\begin{proof}
	Let $0 \leq \theta^{\prime}<\theta \leq t$ be fixed. Write 
\begin{equation*}
	K_H (t,\theta) - K_H(t,\theta^{\prime}) = c_H\left[f_t(\theta) - f_t(\theta^{\prime}) + \left(\frac{1}{2}-H\right) \left(g_t(\theta) - g_t(\theta^{\prime})\right)\right],
\end{equation*}
	where $f_t (\theta):= \left(\frac{t}{\theta} \right)^{H-\frac{1}{2}} (t-\theta)^{H-\frac{1}{2}}$ and $g_t(\theta) := \int_{\theta}^t \frac{f_u (\theta)}{u}du$.\par 
We continue with the estimation of $K_H (t,\theta) - K_H(t,\theta^{\prime})$. First, observe that there exists a constant $0<C<1$ such that 
\begin{equation} \label{eq:inequalityAlpha}
	\frac{y^{-\alpha} -x^{-\alpha}}{(x-y)^{\gamma}} \leq C y^{-\alpha-\gamma},
\end{equation}
for every $0<y<x<\infty$ and $\alpha :=(\frac{1}{2}-H) \in \left(0,\frac{1}{2}\right)$ as well as $0 < \gamma < \frac{1}{2}-\alpha$. Indeed, rewriting \eqref{eq:inequalityAlpha} yields using the substitution $z:= \frac{x}{y}$, $z\in (1,\infty)$,
\begin{align*}
	\frac{y^{-\alpha} -x^{-\alpha}}{(x-y)^{\gamma}} y^{\alpha + \gamma} = \frac{1-z^{-\alpha}}{(z-1)^\gamma} =: g(z).
\end{align*}
	Furthermore, since $\alpha + \gamma < 1$ we get that
	\begin{align*}
		\lim_{z\to 1} g(z) = \lim_{z\to 1} \frac{1-z^{-\alpha}}{(z-1)^\gamma} = \lim_{z\to 1} \frac{1+\alpha z^{-\alpha-1}}{\gamma(z-1)^{\gamma-1}} = 0,
	\end{align*}
	and
	\begin{align*}
	\lim_{z\to \infty}  g(z) = 0.
	\end{align*}
	Moreover, for $2 \leq z \leq \infty$ we get the upper bound
	\begin{align*}
		0 \leq g(z) \leq \frac{1-z^{-\alpha}}{(z-1)^\gamma} < \frac{1}{1} = 1,
	\end{align*}
	and for $1< z < 2$ we have that
	\begin{align*}
		g(z) = \frac{z^{\alpha}-1}{(z-1)^\gamma z^\alpha} < \frac{z-1}{(z-1)^\gamma (z-1)^\alpha} = (z-1)^{1-\gamma-\alpha} \leq 1.
	\end{align*}
	This shows inequality \eqref{eq:inequalityAlpha} which then implies for $0 < \gamma < H$ that
\begin{align*}
	f_t(\theta) - f_t(\theta^{\prime}) &= \left(\frac{t}{\theta} (t-\theta)\right)^{H-\frac{1}{2}}-\left(\frac{t}{\theta^{\prime}} (t-\theta^{\prime})\right)^{H-\frac{1}{2}} \\
	&\lesssim \left(\frac{t}{\theta}(t-\theta)\right)^{H-\frac{1}{2} -\gamma}t^{2\gamma }\frac{(\theta-\theta^{\prime})^{\gamma }}{(\theta \theta^{\prime})^{\gamma }} \lesssim \left(t-\theta \right)^{H-\frac{1}{2}-\gamma} \frac{(\theta -\theta^{\prime})^{\gamma}}{(\theta \theta^\prime)^\gamma}.
\end{align*}
Further, 
\begin{align*}
	g_{t}(\theta )-g_{t}(\theta^{\prime}) &= \int_{\theta }^{t}\frac{f_{u}(\theta)-f_{u}(\theta^{\prime})}{u}du-\int_{\theta^{\prime}}^{\theta }\frac{f_{u}(\theta^{\prime})}{u}du \\
	&\leq \int_{\theta }^{t}\frac{f_{u}(\theta )-f_{u}(\theta^{\prime})}{u}du \\
	&\lesssim \frac{(\theta -\theta^{\prime})^{\gamma }}{(\theta \theta^{\prime})^{\gamma }}\int_{\theta }^{t}\frac{(u-\theta )^{H-\frac{1}{2}-\gamma }}{u}du \\
	&\leq \frac{(\theta -\theta^{\prime})^{\gamma }}{(\theta \theta^{\prime})^{\gamma }} \theta^{H-\frac{1}{2}-\gamma }\int_{1}^{\infty }\frac{(v-1)^{H-\frac{1}{2} -\gamma }}{v} dv \\
	&\lesssim \frac{(\theta-\theta^{\prime})^{\gamma }}{(\theta \theta^{\prime})^{\gamma }}\theta^{H-\frac{1}{2}-\gamma } \\
	&\lesssim \frac{(\theta -\theta^{\prime})^{\gamma }}{(\theta \theta^{\prime})^{\gamma }}\theta^{H-\frac{1}{2}-\gamma }(t-\theta )^{H-\frac{1}{2} - \gamma}.
\end{align*}

Consequently, we get for $\gamma\in (0,H)$, $0<\theta^{\prime}<\theta<t\leq T$, that
\begin{align*}
	K_{H}(t,\theta)-K_{H}(t,\theta^{\prime})\leq C \cdot c_H \frac{(\theta-\theta^{\prime})^{\gamma }}{(\theta \theta^{\prime})^{\gamma }}\theta^{H-\frac{1}{2} - \gamma }(t-\theta )^{H-\frac{1}{2}-\gamma },
\end{align*}
	where $C>0$ is a constant merely depending on $T$. Thus 
\begin{align*}
	\int_{0}^{t}\int_{0}^{\theta }&\frac{(K_{H}(t,\theta)-K_{H}(t,\theta^{\prime}))^{2}}{|\theta -\theta^{\prime}|^{1+2\beta }}d\theta^{\prime}d\theta \\
	&\lesssim \int_{0}^{t}\int_{0}^{\theta}\frac{|\theta -\theta^{\prime}|^{-1-2\beta +2\gamma }}{(\theta \theta^{\prime})^{2\gamma }}\theta^{2H-1-2\gamma }(t-\theta)^{2H-1-2\gamma }d\theta^{\prime}d\theta \\
	& = \int_{0}^{t}\theta^{2H-1-4\gamma }(t-\theta )^{2H-1-2\gamma }\int_{0}^{\theta}|\theta -\theta^{\prime}|^{-1-2\beta +2\gamma }(\theta^{\prime})^{-2\gamma}d\theta^{\prime}d\theta \\
	&= \int_{0}^{t}\theta^{2H-1-4\gamma -2\beta}(t-\theta )^{2H-1-2\gamma }\frac{\Gamma(-2\beta +2\gamma )\Gamma (-2\gamma +1)}{\Gamma (-2\beta +1)} d\theta \\
	&\lesssim \int_{0}^{t}\theta^{2H-1-4\gamma -2\beta }(t-\theta )^{2H-1-2\gamma }d\theta \\
	&= \frac{\Gamma (2H-2\gamma )\Gamma (2H-4\gamma -2\beta )}{\Gamma(4H-6\gamma -2\beta )}t^{4H-6\gamma -2\beta -1}<\infty,
\end{align*}
	for sufficiently small $\gamma $ and $\beta$. On the other hand, we have that 
\begin{align*}
	\int_{0}^{t}\int_{\theta}^{t}&\frac{(K_{H}(t,\theta)-K_{H}(t,\theta^{\prime}))^{2}}{|\theta -\theta^{\prime}|^{1+2\beta }}d\theta^{\prime}d\theta \\
	&\lesssim \int_{0}^{t}\theta^{2H-1-4\gamma }(t-\theta)^{2H-1-2\gamma }\int_{\theta}^{t}\frac{|\theta -\theta^{\prime}|^{-1-2\beta +2\gamma }}{(\theta^{\prime})^{2\gamma }}d\theta^{\prime}d\theta \\
	&\leq \int_{0}^{t}\theta^{2H-1-6\gamma }(t-\theta)^{2H-1-2\gamma } \int_{\theta}^t |\theta -\theta^{\prime}|^{-1-2\beta +2\gamma } d\theta^{\prime}d\theta \\
	&\lesssim \int_{0}^{t}\theta^{2H-1-6\gamma }(t-\theta )^{2H-1 -2\beta }d\theta \lesssim t^{4H-6\gamma -2\beta -1}.
\end{align*}
	Therefore,
\begin{align*}
	\int_{0}^{t}\int_{0}^{t}\frac{(K_{H}(t,\theta )-K_{H}(t,\theta^{\prime}))^{2}}{|\theta -\theta^{\prime}|^{1+2\beta }}d\theta^{\prime}d\theta <\infty .
\end{align*}
\end{proof}

\begin{lemma}\label{lem:iterativeInt} 
	Let $H \in \left( 0, \frac{1}{2} \right)$, $0 \leq \theta<t \leq T$ and $(\varepsilon_1,\dots, \varepsilon_{n})\in \{0,1\}^{n}$ be fixed. Assume $w_j+\left(H-\frac{1}{2}-\gamma\right) \varepsilon_j>-1$ for all $j=1,\dots,n$. Then there exists a finite constant $C_{H,T}>0$ depending only on $H$ and $T$ such that for $\gamma \in (0,H)$
\begin{align*}
	\int_{\Delta_{\theta,t}^{n}} &\prod_{j=1}^{n} (K_H(s_j,\theta) - K_H(s_j,\theta^{\prime}))^{\varepsilon_j} |s_j-s_{j-1}|^{w_j} ds \\
	&\leq \left( C_{H,T} \left(\frac{\theta-\theta^{\prime}}{\theta \theta^{\prime}}\right)^\gamma \theta^{\left( H-\frac{1}{2} - \gamma\right)} \right)^{\sum_{j=1}^n \varepsilon_j} \Pi_{\gamma}(n) \,(t-\theta)^{\sum_{j=1}^n \left( w_j + \left( H-\frac{1}{2}-\gamma\right) \varepsilon_j \right) +n},
\end{align*}
where 
\begin{align}\label{VI_Pi}
	\Pi_{\gamma}(m):= \frac{\prod_{j=1}^n\Gamma (w_j +1)}{\Gamma\left(\sum_{j=1}^{n} w_j + \left(H-\frac{1}{2}-\gamma
\right)\sum_{j=1}^{n} \varepsilon_j + n \right)}.
\end{align}
\end{lemma}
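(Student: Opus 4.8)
The plan is to combine the pointwise kernel estimate that is already produced inside the proof of \Cref{lem:doubleint} with the classical Dirichlet--Liouville evaluation of a power-law integral over the simplex $\Delta_{\theta,t}^n$. The deterministic factors $|s_j-s_{j-1}|^{w_j}$ stay as they are, while the kernel differences are first replaced by an explicit power of $(s_j-\theta)$.

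First I would recall that the estimation carried out in \Cref{lem:doubleint} in fact yields, for $0<\theta'<\theta<s\le t$ and $\gamma\in(0,H)$, the pointwise bound
\[
K_H(s,\theta)-K_H(s,\theta')\le C\,c_H\,\Big(\tfrac{\theta-\theta'}{\theta\theta'}\Big)^{\gamma}\theta^{H-\frac12-\gamma}(s-\theta)^{H-\frac12-\gamma},
\]
with $C$ depending only on $T$ (this is exactly the displayed intermediate estimate there, with $t$ replaced by the running variable $s$). Raising it to the power $\varepsilon_j\in\{0,1\}$ and multiplying over $j=1,\dots,n$ factors out the prefactor $\big(C_{H,T}(\tfrac{\theta-\theta'}{\theta\theta'})^{\gamma}\theta^{H-\frac12-\gamma}\big)^{\sum_j\varepsilon_j}$ with $C_{H,T}:=C\,c_H$, precisely the constant announced here and in \Cref{prop:mainestimate1}. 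What remains is the purely deterministic inequality
\[
\int_{\Delta_{\theta,t}^n}\prod_{j=1}^n (s_j-\theta)^{(H-\frac12-\gamma)\varepsilon_j}\,(s_j-s_{j-1})^{w_j}\,ds\le \Pi_\gamma(n)\,(t-\theta)^{\sum_{j}\big(w_j+(H-\frac12-\gamma)\varepsilon_j\big)+n},
\]
where $s_0:=\theta$.

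Next I would reduce this to a pure increment integral. Writing $a_j:=w_j+(H-\frac12-\gamma)\varepsilon_j$, the hypothesis $w_j+(H-\frac12-\gamma)\varepsilon_j>-1$ says exactly $a_j>-1$; and since $H-\frac12-\gamma<0$ while $s_j-\theta\ge s_j-s_{j-1}$ on the simplex, each kernel-induced factor obeys $(s_j-\theta)^{(H-\frac12-\gamma)\varepsilon_j}\le (s_j-s_{j-1})^{(H-\frac12-\gamma)\varepsilon_j}$. Substituting the increments $x_j:=s_j-s_{j-1}$ (Jacobian one) turns the integral into the Dirichlet--Liouville integral $\int_{\{x_j>0,\ \sum_j x_j<t-\theta\}}\prod_j x_j^{a_j}\,dx$, which equals $(t-\theta)^{\sum_j a_j+n}\,\prod_j\Gamma(a_j+1)/\Gamma\!\big(\sum_j a_j+n+1\big)$; equivalently, integrating one variable at a time from the innermost to the outermost slot of $\Delta_{\theta,t}^n$ gives a telescoping product of Beta functions $\beta(\cdot,w_{j}+1)$, each convergent because $a_j>-1$. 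Either route delivers a constant of the right shape and, crucially, the correct power $(t-\theta)^{\sum_j a_j+n}$, which matches the announced exponent.

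The hard part will be the Gamma bookkeeping needed to land on $\Pi_\gamma(n)=\prod_j\Gamma(w_j+1)/\Gamma\!\big(\sum_j w_j+(H-\frac12-\gamma)\sum_j\varepsilon_j+n\big)$: the naive Dirichlet evaluation carries $\Gamma(a_j+1)$ in the numerator and $\Gamma(\sum_j a_j+n+1)$ in the denominator, whereas $\Pi_\gamma(n)$ carries $\Gamma(w_j+1)$ and $\Gamma(\sum_j a_j+n)$. Bridging the two is where I would spend the effort: using the functional equation $\Gamma(x+1)=x\,\Gamma(x)$ to trade the surplus unit in the denominator for the factor $\sum_j a_j+n$, and exploiting the standing assumptions $\sum_k H_k<\tfrac16$ and $\sup_k H_k<\tfrac1{12}$ to keep $\sum_j a_j+n$ bounded away from $0$ and growing linearly in $n$, so that it absorbs the per-factor discrepancy between $\Gamma(a_j+1)$ and $\Gamma(w_j+1)$ through the monotonicity and log-convexity of $\Gamma$ on $(0,\infty)$. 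The kernel bound and the simplex integral are routine; it is this constant-matching, together with verifying the integrability thresholds $a_j>-1$ at each iterated step, that demands care.
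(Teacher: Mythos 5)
Your opening steps coincide with the paper's: the pointwise bound $K_H(s,\theta)-K_H(s,\theta')\lesssim c_H\left(\frac{\theta-\theta'}{\theta\theta'}\right)^{\gamma}\theta^{H-\frac{1}{2}-\gamma}(s-\theta)^{H-\frac{1}{2}-\gamma}$ extracted from the proof of \Cref{lem:doubleint}, and the reduction to the deterministic simplex integral $\int_{\Delta_{\theta,t}^n}\prod_j(s_j-\theta)^{(H-\frac{1}{2}-\gamma)\varepsilon_j}(s_j-s_{j-1})^{w_j}\,ds$, are exactly what the paper does. The divergence is in how you evaluate that integral, and this is where there is a genuine gap. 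The paper integrates iteratively from the innermost variable outward, so that at each step the new negative exponent $(H-\frac{1}{2}-\gamma)\varepsilon_j$ attaches to the \emph{accumulated} power of $(s_j-\theta)$, and the resulting product of Beta factors telescopes to (a bound by) $\Pi_\gamma(n)$. You instead degrade $(s_j-\theta)^{(H-\frac{1}{2}-\gamma)\varepsilon_j}\le(s_j-s_{j-1})^{(H-\frac{1}{2}-\gamma)\varepsilon_j}$ (correct, since the exponent is negative and $s_j-\theta\ge s_j-s_{j-1}$) and apply Dirichlet--Liouville with $a_j:=w_j+(H-\frac{1}{2}-\gamma)\varepsilon_j$. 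This does give a valid upper bound with the right power of $(t-\theta)$, but with constant $\prod_j\Gamma(a_j+1)\,/\,\Gamma(\sum_j a_j+n+1)$, and the bridge to $\Pi_\gamma(n)=\prod_j\Gamma(w_j+1)\,/\,\Gamma(\sum_j a_j+n)$ that you defer to ``Gamma bookkeeping'' does not exist: it would require $\prod_j\Gamma(a_j+1)/\Gamma(w_j+1)\le C\left(\sum_j a_j+n\right)$, and the left-hand side can grow geometrically in $n$. Take $w_j=0$, $\varepsilon_j=1$ for all $j$ and $H-\frac{1}{2}-\gamma\approx-0.45$: each factor is $\Gamma(0.55)/\Gamma(1)\approx 1.62$, so the product is about $(1.62)^n$, while the single unit you can trade via $\Gamma(x+1)=x\,\Gamma(x)$ buys only a factor linear in $n$. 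Log-convexity and the standing assumptions on $H$ do not help; the exponential loss is created precisely by pairing the negative exponent with the small increment $s_j-s_{j-1}$ instead of with the full distance $s_j-\theta$.

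Two remarks to put this in perspective. First, your route does prove the lemma with $\Pi_\gamma(n)$ replaced by $C^n\,\Pi_\gamma(n)$ for an absolute constant $C>1$ (each ratio $\Gamma(a_j+1)/\Gamma(w_j+1)$ is uniformly bounded once $a_j\ge\delta>-1$), and such a weakening would in fact be harmless in the only place the lemma is used (\Cref{prop:mainestimate1} and \Cref{thm:compactnessApplication}, where extra geometric factors are absorbed by the $\Gamma\left(\frac{2}{3}n+1\right)$ decay). But the statement as written, with the specific constant $\Pi_\gamma(n)$, is not reached by your argument. To prove it as stated you should drop the increment replacement and perform the iterated one-dimensional integration $\int_\theta^{s_{j+1}}(s_{j+1}-s_j)^{a}(s_j-\theta)^{b}\,ds_j=\frac{\Gamma(a+1)\Gamma(b+1)}{\Gamma(a+b+2)}(s_{j+1}-\theta)^{a+b+1}$ with the kernel exponents folded into $b$ at each stage, which is the paper's route.
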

\begin{proof}
	Recall, that for given exponents $a,b>-1$ and some fixed $s_{j+1}>s_j$ we have 
\begin{equation*}
	\int_{\theta}^{s_{j+1}} (s_{j+1}-s_j)^{a} (s_j-\theta)^b ds_j =\frac{\Gamma\left( a+1\right)\Gamma \left( b+1\right)}{\Gamma \left( a+b+2\right)}
(s_{j+1}-\theta)^{a+b+1}.
\end{equation*}
	Due to \Cref{lem:doubleint} we have that for every $\gamma\in (0,H)$, $0<\theta^{\prime}<\theta<s_j\leq T$, 
\begin{align*}
	K_{H}(s_j,\theta )-K_{H}(s_j,\theta^{\prime})\leq C_{H,T} \frac{(\theta-\theta^{\prime})^{\gamma }}{(\theta \theta^{\prime})^{\gamma }}\theta^{H-
\frac{1}{2}-\gamma }(s_j-\theta )^{H-\frac{1}{2}-\gamma },
\end{align*}
	for $C_{H,T} := C \cdot c_H$, where $c_H$ is the constant in \eqref{eq:kernel} and $C>0$ is some constant merely depending on $T$. Consequently, we get that 
\begin{align*}
	\int_{\theta}^{s_2} &|K_H(s_1,\theta)-K_H(s_1,\theta^{\prime})|^{\varepsilon_1} |s_2-s_1|^{w_2}|s_1-\theta|^{w_1}ds_1 \\
	&\leq C_{H,T}^{\varepsilon_1} \frac{(\theta-\theta^{\prime})^{\gamma\varepsilon_1 }}{(\theta \theta^{\prime})^{\gamma\varepsilon_1}}\theta^{\left(H-\frac{1}{2}-\gamma\right)\varepsilon_1}\int_{\theta}^{s_2}|s_2-s_1|^{w_2}|s_1-\theta|^{w_1+\left(H-\frac{1}{2}-\gamma\right)\varepsilon_1}ds_1 \\
	&= C_{H,T}^{\varepsilon_1} \frac{(\theta-\theta^{\prime})^{\gamma\varepsilon_1 }}{(\theta \theta^{\prime})^{\gamma\varepsilon_1}}\theta^{\left(H-\frac{1}{2}-\gamma\right)\varepsilon_1 } \frac{\Gamma\left(\hat{w}_1\right)\Gamma\left(\hat{w}_2\right)}{\Gamma\left(\hat{w}_1+\hat{w}_2\right)}(s_2-\theta)^{w_1+w_2+\left(H-\frac{1}{2}-\gamma\right)\varepsilon_1+1},
\end{align*}
where 
\begin{equation*}
\hat{w}_1 := w_1+\left(H-\frac{1}{2}-\gamma\right)\varepsilon_1+1, \quad 
\hat{w}_2:=w_2+1.
\end{equation*}
Noting that
\begin{align*}
	\prod_{j=1}^{n-1} \frac{\Gamma \left(\sum_{l=1}^{j} w_l + \left(H-\frac{1}{2}-\gamma \right)\sum_{l=1}^{j} \varepsilon_l + j\right)\Gamma \left( w_{j+1}+1\right)}{\Gamma \left( \sum_{l=1}^{j+1} w_l + \left(H-\frac{1}{2}-\gamma \right)\sum_{l=1}^{j} \varepsilon_l + j + 1 \right)} \leq \Pi_\gamma(n).
\end{align*}
and iterative integration yields the desired formula.
\end{proof}

Finally, we are able to give the proof of \Cref{prop:mainestimate1}.

\begin{proof}[Proof of \Cref{prop:mainestimate1}]
	The integration by parts formula \eqref{eq:integrationByParts} yields that
\begin{equation*}
	\int_{\Delta _{\theta ,t}^n}\left( \prod_{j=1}^n D^{\alpha_{j}}f_{j}(s_{j},\Bhat_{s_{j}})\varkappa _{j}(s_{j})\right) ds=\int_{\Rbb^{dn}}\Lambda_{\alpha}^{\varkappa f}(\theta ,t,z)dz.
\end{equation*}
	Taking the expectation and applying \Cref{thm:mainThmLocalTime} we get that
\begin{align*}
	&\Eabs{\int_{\Delta _{\theta ,t}^n}\left( \prod_{j=1}^n D^{\alpha_{j}}f_{j}(s_{j}, \Bhat_{s_{j}})\varkappa _{j}(s_{j})\right) ds}  \\
	&\quad \leq \frac{T^{\frac{\vert \alpha \vert}{12}}}{\sqrt{2\pi}^{dn}} (\Psi _{\alpha }^{\varkappa }(\theta, t, H, d))^{\frac{1}{2}} \prod_{j=1}^n \left\Vert f_{j}\right\Vert_{L^{1}(\mathbb{R}^{d};L^{\infty}([0,T]))},
\end{align*}
	where
\begin{align*}
	&\Psi _{\alpha }^{\varkappa }(\theta, t, H, d) := \left( \prod_{k=1}^{d}\sqrt{(2\left\vert \alpha ^{(k)}\right\vert )!} \right) \\
	&\quad \times\sum_{\sigma \in \Scal(n,n)}\int_{\Delta_{0,t}^{2n}} \vert \Delta s \vert^{- H \left( 1 + \alpha_{[\sigma(\Delta)]} \right)} \prod_{j=1}^{2n} (K_{H_m}(s_{j},\theta)-K_{H_m}(s_{j},\theta' ))^{\varepsilon _{\lbrack \sigma (j)]}}ds.
\end{align*}
	Under the assumption $-\sum_{k=1}^{d}H_k (1+ \alpha _{\lbrack \sigma (j)]}^{(k)} + \alpha _{\lbrack \sigma (j-1)]}^{(k)})+(H_m - \frac{1}{2}-\gamma_m)\varepsilon _{\lbrack \sigma (j)]}>-1$ for all $j=1,...,2n$, we can apply \Cref{lem:iterativeInt} and thus get
\begin{align*}
	&\Psi _{\alpha }^{\varkappa }(\theta, t, H, d) \\
	&\quad \leq \sum_{\sigma \in \Scal(n,n)} \left( C_T \left( \frac{\theta -\theta'}{\theta \theta'}\right) ^{\gamma_m} \theta^{(H_m-\frac{1}{2}-\gamma_m)} \right)^{\sum_{j=1}^{2n} \varepsilon_{[\sigma(j)]}} \Pi_{\gamma }(2n) \\
 	&\qquad \times \left( \prod_{k=1}^{d}\sqrt{(2\left\vert \alpha ^{(k)}\right\vert )!} \right) (t-\theta )^{-\sum_{k=1}^{d}H_k \left(2n+4\left\vert \alpha^{(k)}\right\vert \right)+(H_m-\frac{1}{2}-\gamma_m)\sum_{j=1}^{2n}\varepsilon _{\lbrack \sigma (j)]}+2n},
\end{align*}
	where $\Pi_{\gamma }(2n)$ is defined as in \eqref{VI_Pi}. We define the constant $K_{d,H}$ by
	\begin{align}\label{eq:constantKdh}
		K_{d,H} := 2 \sup_{j= 1, \dots ,2n} \Gamma \left(1-\sum_{k=1}^{d} H_k \left(1+ \alpha_{\lbrack \sigma (j)]}^{(k)} + \alpha_{\lbrack \sigma (j-1)]}^{(k)}\right)\right)
	\end{align}
	and thus an upper bound of $\Pi_\gamma(2n)$ is given by
\begin{align*}
	\Pi_{\gamma }(2n) \leq \frac{K_{d,H}^{2n}}{2^{2n} \Gamma\left(-\sum_{k=1}^{d} H_k\left(2n+4\left\vert \alpha ^{(k)}\right\vert \right)+\left(H_m - \frac{1}{2}-\gamma_m \right)\sum_{j=1}^{2n}\varepsilon_{\lbrack \sigma(j)]}+2n \right)}.
\end{align*}
	Note that $\sum_{j=1}^{2n}\varepsilon_{\lbrack \sigma(j)]} = 2\sum_{j=1}^{n}\varepsilon_{j}$ and 
	\begin{align*}
		\# \Scal(n,n) = \dbinom{2n}{n} = \frac{2^{2n}}{\sqrt{\pi}} \frac{\Gamma\left(n+\frac{1}{2} \right)}{\Gamma(n+1)} \leq 2^{2n}.
	\end{align*}
	Hence, it follows that
\begin{align*}
	&(\Psi _{k}^{\varkappa }(\theta, t, H, d))^{\frac{1}{2}} \\
	&\quad \leq K_{d,H}^{n} \left( C_T \left( \frac{\theta -\theta'}{\theta \theta'}\right)^{\gamma_m }\theta^{(H_m-\frac{1}{2}-\gamma_m)}\right)^{\sum_{j=1}^n \varepsilon_{j}} \\
	&\qquad \times \frac{\left(\prod_{k=1}^{d}\left(2\left\vert \alpha^{(k)}\right\vert \right)!\right)^{\frac{1}{4}}(t-\theta )^{-\sum_{k=1}^{d}H_k \left(n+2\left\vert \alpha^{(k)}\right\vert \right)+\left(H_m-\frac{1}{2}-\gamma_m \right)\sum_{j=1}^n \varepsilon_{j}+n}}{\Gamma\left(2n - \sum_{k=1}^{d} H_k \left(2n+4\left\vert \alpha^{(k)}\right\vert \right)+2\left(H_m - \frac{1}{2}-\gamma_m \right)\sum_{j=1}^n \varepsilon _{j} \right)^{\frac{1}{2}}},
\end{align*}
\end{proof}

\begin{proposition}\label{prop:mainestimate2} 
	Let the functions $f$ and $\varkappa $ be defined as in \eqref{f} and \eqref{kappa}, respectively. Let $0 \leq \theta <t \leq T$ and
\begin{equation*}
	\varkappa _{j}(s)=(K_{H_m}(s,\theta ))^{\varepsilon _{j}},\theta <s<t,
\end{equation*}
for every $j=1, \dots, n$ with $(\varepsilon_1, \dots, \varepsilon_n)\in\{0,1\}^n$. Let $\alpha \in (\Nbb_{0}^{d})^n$ be a multi-index and suppose that there exists $\delta $ such that 
\begin{equation*}
	-\sum_{k=1}^{d} H_k \left(1+2\alpha _{j}^{(k)}\right)+\left(H_m-\frac{1}{2}\right) \geq \delta >-1
\end{equation*}
for all $j= 1, \dots, n$ and $d\geq 1$. Then there exist constants $C_T$ (depending on $T$) and $K_{d,H}$ (depending on $d$ and $H$) such that for any $0 \leq \theta <t \leq T$ we have 
\begin{align*}
	&\Eabs{\int_{\Delta _{\theta ,t}^{n}}\left( \prod_{j=1}^{n} D^{\alpha_{j}}f_{j}(s_{j}, \Bhat_{s_{j}})\varkappa _{j}(s_{j})\right) ds}  \\
	&~ \leq \frac{K_{d,H}^{n} \cdot T^{\frac{\vert \alpha \vert}{12}}}{\sqrt{2\pi}^{dn}} \left( C_T \theta^{(H_m-\frac{1}{2})}\right)^{\sum_{j=1}^{n} \varepsilon_j} \prod_{j=1}^n \left\Vert f_{j}(\cdot ,z_{j})\right\Vert _{L^{1}(\mathbb{R}^{d};L^{\infty }([0,T]))} \\
	&\quad \times \frac{\left(\prod_{k=1}^{d} \left(2\left\vert \alpha ^{(k)}\right\vert\right)!\right)^{\frac{1}{4}}(t-\theta )^{-\sum_{k=1}^{d} H_k \left(n+2\left\vert \alpha^{(k)}\right\vert \right)+ \left(H_m-\frac{1}{2}\right)\sum_{j=1}^n \varepsilon _{j}+n}}{\Gamma(2n-\sum_{k=1}^{d}H_k(2n+4\left\vert \alpha^{(k)}\right\vert )+2(H_m - \frac{1}{2})\sum_{j=1}^n \varepsilon _{j})^{\frac{1}{2}}}.
\end{align*}
\end{proposition}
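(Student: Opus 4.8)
The plan is to mirror the proof of \Cref{prop:mainestimate1} almost verbatim, the only structural difference being that the single kernel $K_{H_m}(s,\theta)$ now plays the role previously played by the increment $K_{H_m}(s,\theta)-K_{H_m}(s,\theta')$; correspondingly the Hölder parameter $\gamma_m$ disappears and is effectively set to $0$ throughout. First I would invoke the integration by parts formula \eqref{eq:integrationByParts} of \Cref{thm:mainThmLocalTime} to rewrite the left-hand side as $\int_{\Rbb^{dn}}\Lambda_\alpha^{\varkappa f}(\theta,t,z)\,dz$, take expectations, and apply estimate \eqref{eq:intestL}. This reduces the entire statement to producing a bound on $\Psi_\alpha^{\varkappa}(\theta,t,H,d)$, which here carries the factor $\prod_{j=1}^{2n}(K_{H_m}(s_j,\theta))^{\varepsilon_{[\sigma(j)]}}$ under the simplex integral in place of the increment powers.

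The one genuinely new ingredient is a pointwise estimate on the single kernel of the form
\[
	K_{H}(s,\theta)\leq C_{H,T}\,\theta^{H-\frac12}(s-\theta)^{H-\frac12},\qquad 0<\theta<s\leq T,
\]
the analogue of the increment bound established inside \Cref{lem:doubleint}. I would derive it from the explicit representation \eqref{eq:kernel}: the first summand equals $s^{H-\frac12}\theta^{\frac12-H}(s-\theta)^{H-\frac12}$, and since $H<\frac12$ and $\theta<s$ one has $s^{H-\frac12}\theta^{\frac12-H}\leq 1$, so it is $\lesssim (s-\theta)^{H-\frac12}$; for the second summand the substitution $u=\theta v$ turns the inner integral into $\theta^{H-\frac12}\int_1^{s/\theta}v^{H-\frac32}(v-1)^{H-\frac12}\,dv$, which is controlled by the finite Beta integral $\beta(1-2H,H+\tfrac12)$, yielding a bound $\lesssim\theta^{H-\frac12}$. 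Finally, using $\theta\leq T$ and $s-\theta\leq T$ together with the negativity of the exponent $H-\frac12$, each of the two one-sided factors is completed to the symmetric product $\theta^{H-\frac12}(s-\theta)^{H-\frac12}$ at the cost of a power of $T$ absorbed into $C_{H,T}=C\cdot c_{H}$.

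With this estimate in hand I would prove a single-kernel counterpart of \Cref{lem:iterativeInt}: substituting the pointwise bound into the iterated integral, each factor $(K_{H_m}(s_j,\theta))^{\varepsilon_j}$ contributes a prefactor $\theta^{(H_m-\frac12)\varepsilon_j}$ and a factor $(s_j-\theta)^{(H_m-\frac12)\varepsilon_j}$ that merges with $|s_j-s_{j-1}|^{w_j}$, so that the very same Beta-function recursion used there gives
\[
	\int_{\Delta_{\theta,t}^n}\prod_{j=1}^n (K_{H_m}(s_j,\theta))^{\varepsilon_j}|s_j-s_{j-1}|^{w_j}\,ds\leq \Big(C_{H_m,T}\,\theta^{H_m-\frac12}\Big)^{\sum_j\varepsilon_j}\Pi_0(n)\,(t-\theta)^{\sum_j\left(w_j+(H_m-\frac12)\varepsilon_j\right)+n},
\]
with $\Pi_0$ as in \eqref{VI_Pi} specialized to $\gamma=0$. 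The standing hypothesis $-\sum_k H_k(1+2\alpha_j^{(k)})+(H_m-\frac12)\geq\delta>-1$, which is the $\gamma_m=0$ case of \eqref{eq:assumptionSumAlpha}, is exactly what guarantees $w_j+(H_m-\frac12)\varepsilon_j>-1$ for the choices $w_j=-\sum_k H_k\bigl(1+\alpha_{[\sigma(j)]}^{(k)}+\alpha_{[\sigma(j-1)]}^{(k)}\bigr)$ arising from $|\Delta s|^{-H(1+\alpha)}$, so the recursion is legitimate and every Gamma argument stays positive.

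It then remains to feed this into the bound for $\Psi_\alpha^{\varkappa}$: applying the single-kernel lemma for each $\sigma\in\Scal(n,n)$, using $\#\Scal(n,n)=\binom{2n}{n}\leq 2^{2n}$ and the definition of $K_{d,H}$ in \eqref{eq:constantKdh} to control $\Pi_0(2n)$, and taking the square root precisely as in \Cref{prop:mainestimate1} reproduces the asserted estimate, now with $H_m-\frac12$ in place of $H_m-\frac12-\gamma_m$ everywhere and with the $\left(\tfrac{\theta-\theta'}{\theta\theta'}\right)^{\gamma_m}$ prefactor collapsed to $1$. I expect the only delicate point to be verifying that the Gamma-function arguments governing $\Pi_0$ stay strictly positive after the $\alpha$-dependent exponents are summed, but this is secured word for word by the standing hypothesis; the remainder is the same bookkeeping as in the proof of \Cref{prop:mainestimate1}.
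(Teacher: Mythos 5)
Your proposal is correct and follows exactly the route the paper takes: the paper omits the proof of \Cref{prop:mainestimate2} precisely because it is the proof of \Cref{prop:mainestimate1} with \Cref{lem:iterativeInt} replaced by the single-kernel iterated-integral estimate (\Cref{lem:iterativeInt2}), which in turn rests on the pointwise bound $K_H(s,\theta)\leq C_{H,T}\,\theta^{H-\frac12}(s-\theta)^{H-\frac12}$ that you derive correctly from \eqref{eq:kernel}. Your reconstruction of that bound and of the Beta-function recursion with $\gamma_m=0$ matches the paper's argument step for step.
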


	The proof of \Cref{prop:mainestimate2} is similar to the one of \Cref{prop:mainestimate1} by using the subsequent lemma instead of \Cref{lem:iterativeInt} and thus it is omitted in this manuscript.

\begin{lemma}\label{lem:iterativeInt2}
	Let $H \in \left(0,\frac{1}{2}\right)$, $0 \leq \theta<t \leq T$ and $(\varepsilon_1,\dots, \varepsilon_{n})\in \{0,1\}^{n}$ be fixed. Assume $w_j+\left(H-\frac{1}{2}\right) \varepsilon_j>-1$ for all $j=1,\dots,n$. Then there exists a finite constant $C_{H,T}>0$ depending only on $H$ and $T$ such that 
\begin{align*}
	\int_{\Delta_{\theta,t}^{n}} &\prod_{j=1}^{n} (K_H(s_j,\theta))^{\varepsilon_j} |s_j-s_{j-1}|^{w_j} ds \\
&\leq \left( C_{H,T} \theta^{\left( H-\frac{1}{2}\right)} \right)^{\sum_{j=1}^n \varepsilon_j} \Pi_0(n) \, (t-\theta)^{\sum_{j=1}^n \left( w_j + \left( H-\frac{1}{2}\right) \varepsilon_j \right) + n},
\end{align*}
where $\Pi_0$ is defined in \eqref{VI_Pi}.
\end{lemma}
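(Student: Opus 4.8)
The plan is to mirror the proof of \Cref{lem:iterativeInt}, replacing the difference-kernel estimate coming from \Cref{lem:doubleint} by a \emph{direct} pointwise bound on $K_H(s,\theta)$ and suppressing the regularization parameter $\gamma$ (i.e.\ taking $\gamma=0$). The two remaining ingredients are then exactly those already used for \Cref{lem:iterativeInt}: the elementary Beta-type integral and the telescoping of Gamma functions.

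First I would establish the auxiliary kernel estimate that there is a constant $C_{H,T}>0$ depending only on $H$ and $T$ with
\begin{align*}
	K_H(s,\theta) \leq C_{H,T}\, \theta^{H-\frac{1}{2}} (s-\theta)^{H-\frac{1}{2}}, \qquad 0<\theta<s\leq T.
\end{align*}
To see this I would split $K_H(s,\theta)$ into its two summands according to \eqref{eq:kernel}. For the first summand $c_H(s/\theta)^{H-\frac12}(s-\theta)^{H-\frac12}$ I would use that $x\mapsto x^{H-\frac12}$ is decreasing to get $(s/\theta)^{H-\frac12}\leq 1$, so this term is $\lesssim (s-\theta)^{H-\frac12}$. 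For the second summand I would apply the substitution $u=\theta v$, which turns the inner integral into $\theta^{2H-1}\int_1^{s/\theta} v^{H-\frac32}(v-1)^{H-\frac12}\,dv$; since $H<\frac12$ this is dominated by the convergent integral $\int_1^\infty v^{H-\frac32}(v-1)^{H-\frac12}\,dv=\beta(1-2H,H+\frac12)$, leaving a contribution $\lesssim \theta^{H-\frac12}$. Finally, since $\theta\leq T$ and $s-\theta\leq T$ and the exponent $H-\frac12$ is negative, one has $\theta^{H-\frac12}\geq T^{H-\frac12}$ and $(s-\theta)^{H-\frac12}\geq T^{H-\frac12}$, which permits inserting the two missing powers at the cost of a factor $T^{\frac12-H}$. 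Collecting the pieces yields the displayed estimate with a constant depending only on $H$ and $T$.

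With this estimate in hand I would insert it into the simplex integral, bounding $K_H(s_j,\theta)^{\varepsilon_j}\leq (C_{H,T}\theta^{H-\frac12})^{\varepsilon_j}(s_j-\theta)^{(H-\frac12)\varepsilon_j}$ and pulling the factor $(C_{H,T}\theta^{H-\frac12})^{\sum_j\varepsilon_j}$ out of the integral. This reduces matters to the pure-power integral $\int_{\Delta_{\theta,t}^n}\prod_{j=1}^n (s_j-\theta)^{(H-\frac12)\varepsilon_j}|s_j-s_{j-1}|^{w_j}\,ds$ with $s_0=\theta$, which I would evaluate one variable at a time (innermost first) using the identity
\begin{align*}
	\int_\theta^{s_{j+1}} (s_{j+1}-s_j)^a (s_j-\theta)^b\, ds_j = \frac{\Gamma(a+1)\Gamma(b+1)}{\Gamma(a+b+2)} (s_{j+1}-\theta)^{a+b+1}.
\end{align*}
The hypothesis $w_j+(H-\frac12)\varepsilon_j>-1$ is precisely what guarantees that every exponent appearing stays above $-1$ (note it forces $w_j>-1$ in both cases $\varepsilon_j\in\{0,1\}$), so each Beta integral is finite. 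Each integration raises the power of the relevant difference by one and produces a ratio of Gamma functions; after $n$ steps the powers accumulate to $(t-\theta)^{\sum_j(w_j+(H-\frac12)\varepsilon_j)+n}$ and the Gamma factors telescope, bounded exactly as in \Cref{lem:iterativeInt} (with $\gamma=0$) by $\Pi_0(n)$, giving the claimed inequality.

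The iteration is routine, being identical to \Cref{lem:iterativeInt} once $\gamma$ is set to zero. The only genuinely new point — and the main thing to get right — is the pointwise kernel estimate, and within it the control of the second summand of $K_H$: one must verify that the scaling $u=\theta v$ produces exactly the factor $\theta^{H-\frac12}$ and that the resulting $v$-integral converges uniformly in $s$ and $\theta$, which hinges on $H<\frac12$. The insertion of the ``free'' powers $\theta^{H-\frac12}$ and $(s-\theta)^{H-\frac12}$ via $\theta,\,s-\theta\leq T$ is what makes the stated (deliberately non-sharp) bound both correct and sufficient; the naively sharper bound $K_H(s,\theta)\lesssim (s-\theta)^{H-\frac12}$ is in fact false as $\theta\to0$, which is why the factor $\theta^{H-\frac12}$ is unavoidable.
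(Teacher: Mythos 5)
Your proposal is correct and follows essentially the same route as the paper: the paper likewise first establishes the pointwise bound $K_H(s_j,\theta)\leq C_{H,T}\,\theta^{H-\frac{1}{2}}(s_j-\theta)^{H-\frac{1}{2}}$ (citing "similar arguments" to those in \Cref{lem:doubleint}, which you simply carry out explicitly from \eqref{eq:kernel}) and then performs the same iterated Beta-integral computation with telescoping Gamma factors as in \Cref{lem:iterativeInt} with $\gamma=0$. Your explicit treatment of the second summand of the kernel and the remark that the factor $\theta^{H-\frac{1}{2}}$ is unavoidable are accurate and fill in details the paper leaves implicit.
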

\begin{proof}
	Using similar arguments as in the proof of \Cref{lem:doubleint} we get the following estimate 
\begin{equation*}
|K_H(s_j,\theta)| \leq C_{H,T} |s_j-\theta|^{H-\frac{1}{2}}\theta^{H-\frac{1}{2}}
\end{equation*}
for every $0<\theta<s_j<T$ and $C_{H,T}:= C \cdot c_H$, where $c_H$ is the constant in \eqref{eq:kernel} and $C>0$ is some constant merely depending on $T$. Thus,
\begin{align*}
	&\int_{\theta}^{s_2} (K_H(s_1,\theta))^{\varepsilon_1}|s_2-s_1|^{w_2}|s_1-\theta|^{w_1}ds_1 \\
	&\quad \leq C_{H,T}^{\varepsilon_1} \, \theta^{\left(H-\frac{1}{2}\right)\varepsilon_1} \int_{\theta}^{s_2} |s_2-s_1|^{w_2} |s_1-\theta|^{w_1+\left(H-\frac{1}{2}\right)\varepsilon_1}ds_1 \\
	&\quad = C_{H,T}^{\varepsilon_1} \, \theta^{\left(H-\frac{1}{2}\right)\varepsilon_1} \frac{\Gamma\left(w_1 +\left(H-\frac{1}{2}\right)\varepsilon_1+1\right)\Gamma\left(w_2+1\right)}{\Gamma\left(w_1 + w_2 + \left(H-\frac{1}{2}\right)\varepsilon_1+2\right)}(s_2-\theta)^{w_1+w_2+\left(H-\frac{1}{2}\right)\varepsilon_1+1}.
\end{align*}
Proceeding similar to the proof of \Cref{lem:iterativeInt} yields the desired estimate.
\end{proof}

\begin{remark}\label{rem:boundFaculty} 
	Note that
\begin{equation*}
	\prod_{k=1}^{d} \left(2\left\vert \alpha ^{(k)}\right\vert \right)! \leq \sqrt{2\pi}^d e^{\frac{\vert \alpha \vert}{2}} \frac{\Gamma\left( \frac{5}{2} \vert \alpha \vert +1\right)}{\sqrt{5 \pi \vert \alpha \vert}}.
\end{equation*}
Indeed, since for $n \geq 1$ sufficiently large we have by Stirling's formula that
	\begin{align*}
		\sqrt{2\pi n} \left( \frac{n}{e} \right)^n \leq n! \leq e^{\frac{1}{12n}} \sqrt{2\pi n} \left( \frac{n}{e} \right)^n,
	\end{align*}
	we get by assuming without loss of generality that $\vert \alpha^{(k)} \vert \geq 1$ for all $1 \leq k \leq d$, that
	\begin{align*}
		\prod_{k=1}^d \left( 2 \vert \alpha^{(k)} \vert \right)! &\leq \prod_{k=1}^d e^{\frac{1}{24 \vert \alpha^{(k)} \vert}} \sqrt{4 \pi \vert \alpha^{(k)} \vert} \left( \frac{2 \vert \alpha^{(k)} \vert}{e} \right)^{2 \vert \alpha^{(k)} \vert} \\
		&\leq e^{\frac{d}{24}} \sqrt{\frac{8}{5}\pi}^d \prod_{k=1}^d \left(\frac{5}{2} \vert \alpha^{(k)} \vert \right)^{\frac{\vert \alpha^{(k)} \vert}{2}} \left( \frac{\frac{5}{2} \vert \alpha^{(k)} \vert}{e} \right)^{2 \vert \alpha^{(k)} \vert} \\
		&\leq \sqrt{2\pi}^d \prod_{k=1}^d  \left( \frac{\frac{5}{2} \vert \alpha \vert}{e} \right)^{\frac{5}{2} \vert \alpha^{(k)} \vert} e^{\frac{\vert \alpha^{(k)} \vert}{2}} \\
		&\leq \sqrt{2\pi}^d e^{\frac{\vert \alpha \vert}{2}} \left( \frac{\frac{5}{2} \vert \alpha \vert}{e} \right)^{\frac{5}{2} \vert \alpha \vert} \leq \sqrt{2\pi}^d e^{\frac{\vert \alpha \vert}{2}} \frac{\Gamma\left( \frac{5}{2} \vert \alpha \vert +1\right)}{\sqrt{5 \pi \vert \alpha \vert}}.
	\end{align*}
\end{remark}

\section{Technical Results}

The following technical result can be found in \cite{LiWei}.

\begin{lemma}\label{lem:LiWei} 
	Assume that $X_{1},...,X_{n}$ are real centered jointly Gaussian random variables, and $\Sigma =(\Ebb[X_{j}X_{k}])_{1\leq j,k\leq n}$ is the covariance matrix, then
\begin{align*}
	\Eabs{ X_{1}\right\vert ...\left\vert X_{n}} \leq \sqrt{\perm{\Sigma}},
\end{align*}
	where $\perm{A}$ is the permanent of a matrix $A=(a_{ij})_{1\leq i,j\leq n}$ defined by
\begin{align*}
	\perm{A}=\sum_{\pi \in \Scal_{n}} \prod_{j=1}^{n}a _{j,\pi (j)}
\end{align*}
	for the symmetric group $\Scal_{n}$.
\end{lemma}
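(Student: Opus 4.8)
The plan is to derive the estimate from the classical identity expressing the permanent of a Hermitian positive semi-definite matrix as a moment of a \emph{proper} (circularly symmetric) complex Gaussian vector, together with the elementary inequality $a^{2}+b^{2}\geq 2|ab|$. The whole argument is self-contained apart from this one representation of the permanent.

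First I would pass to a complex Gaussian model. On a possibly enlarged probability space take an independent copy $(Y_{1},\dots,Y_{n})$ of $(X_{1},\dots,X_{n})$ and set $Z_{j}:=X_{j}+iY_{j}$, $j=1,\dots,n$. Since $X$ and $Y$ are independent, centered, and share the covariance $\Sigma$, a direct computation gives the pseudo-covariance $\Ebb[Z_{j}Z_{k}]=\sigma_{jk}-\sigma_{jk}=0$ and the Hermitian covariance $\Ebb[Z_{j}\overline{Z_{k}}]=\sigma_{jk}+\sigma_{jk}=2\sigma_{jk}$. Hence $(Z_{1},\dots,Z_{n})$ is a proper complex Gaussian vector whose Hermitian covariance matrix is $2\Sigma$.

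Next I would invoke the complex Wick theorem. For a proper complex Gaussian vector with $\Ebb[Z_{j}\overline{Z_{k}}]=C_{jk}$, the vanishing of the pseudo-covariance $\Ebb[Z_{j}Z_{k}]$ forces every surviving Wick pairing to match a factor $Z$ with a factor $\overline{Z}$; these pairings are indexed by $\pi\in\Scal_{n}$, and each contributes $\prod_{j}C_{j\pi(j)}$. Therefore
\[
\Ebb\left[\prod_{j=1}^{n}|Z_{j}|^{2}\right]=\Ebb\left[\prod_{j=1}^{n}Z_{j}\prod_{j=1}^{n}\overline{Z_{j}}\right]=\perm{C}.
\]
With $C=2\Sigma$ and the multilinearity of the permanent in its rows this yields $\Ebb\big[\prod_{j=1}^{n}|Z_{j}|^{2}\big]=\perm{2\Sigma}=2^{n}\perm{\Sigma}$.

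Finally I would bound the left-hand side from below. Since $|Z_{j}|^{2}=X_{j}^{2}+Y_{j}^{2}\geq 2|X_{j}|\,|Y_{j}|$ by the AM--GM inequality, multiplying over $j$, taking expectations, and using that $X$ and $Y$ are independent and identically distributed gives
\[
2^{n}\perm{\Sigma}=\Ebb\left[\prod_{j=1}^{n}|Z_{j}|^{2}\right]\geq 2^{n}\,\Ebb\left[\prod_{j=1}^{n}|X_{j}|\right]\Ebb\left[\prod_{j=1}^{n}|Y_{j}|\right]=2^{n}\left(\Ebb\left[\prod_{j=1}^{n}|X_{j}|\right]\right)^{2}.
\]
Cancelling $2^{n}$ and taking square roots gives the asserted inequality. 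The one genuinely non-elementary ingredient, and the step I expect to be the main obstacle, is the complex Wick/permanent identity; its crux is the properness relation $\Ebb[Z_{j}Z_{k}]=0$, guaranteed precisely by taking $Y$ an independent copy of $X$, since it is what collapses the Wick sum onto permutations and reproduces $\perm{C}$. The AM--GM step and the use of independence are then routine.
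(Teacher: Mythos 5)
Your argument is correct. Note that the paper itself gives no proof of this lemma: it is stated as a technical result imported from the reference [Li--Wei], and your complexification argument (independent copy $Y$, proper complex Gaussian $Z_j=X_j+iY_j$ with vanishing pseudo-covariance, the Wick/permanent identity $\Ebb\bigl[\prod_{j=1}^{n}|Z_j|^2\bigr]=\perm{2\Sigma}=2^{n}\perm{\Sigma}$, and AM--GM plus independence to pass back to $\bigl(\Ebb\bigl[\prod_{j=1}^{n}|X_j|\bigr]\bigr)^2$) is in substance the proof given in that cited source, so you have correctly reconstructed the intended argument rather than found a different one. The only ingredient you lean on without proof, the complex Wick identity, follows from the real Isserlis theorem applied to the $2n$-dimensional real Gaussian vector $(X,Y)$ together with the vanishing of $\Ebb[Z_jZ_k]$, exactly as you indicate.
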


The next lemma corresponds to \cite[Lemma 2]{CD}:

\begin{lemma}\label{lem:CD} 
	Let $Z_{1},...,Z_{n}$ be mean zero Gaussian random variables which are linearly independent. Then for any measurable function $g:\mathbb{R}\longrightarrow \mathbb{R}_{+}$ we have that
\begin{align*}
	&\int_{\mathbb{R}^{n}}g(v_{1}) e^{-\frac{1}{2} \Var{\sum_{j=1}^{n}v_{j}Z_{j}}} dv_{1}...dv_{n} =\frac{(2\pi )^{\frac{n-1}{2}}}{(\det \Cov{Z_{1},...,Z_{n}})^{\frac{1}{2}}}\int_{\mathbb{R}} g\left(\frac{v}{\sigma _{1}}\right) e^{-\frac{v^2}{2}} dv,
\end{align*}
	where $\sigma _{1}^{2}:=\Var{Z_{1}\left\vert Z_{2},...,Z_{n}\right.}$.
\end{lemma}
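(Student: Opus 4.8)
The plan is to read the exponent as a quadratic form in $v=(v_1,\dots,v_n)$ and to integrate out $v_2,\dots,v_n$ first, leaving a one-dimensional Gaussian integral in $v_1$. Set $\Sigma := \Cov{Z_1,\dots,Z_n} = (\Ebb[Z_iZ_j])_{1\le i,j\le n}$; linear independence of the $Z_j$ makes $\Sigma$ symmetric positive definite, and one has $\Var{\sum_{j=1}^n v_jZ_j} = \langle \Sigma v, v\rangle$. Since $g\ge 0$ the whole integrand is nonnegative, so Tonelli's theorem permits interchanging the order of integration with no integrability hypothesis.

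First I would split $v=(v_1,w)$ with $w=(v_2,\dots,v_n)$ and decompose $\Sigma$ into the scalar $\Sigma_{11}=\Var{Z_1}$, the cross-covariance column vector $c:=(\Ebb[Z_1Z_j])_{2\le j\le n}$, and the lower block $M:=\Cov{Z_2,\dots,Z_n}$, so that $\langle\Sigma v,v\rangle = \Sigma_{11}v_1^2 + 2v_1 c^\top w + w^\top M w$. Completing the square in $w$ yields
$$\langle \Sigma v, v\rangle = \big(w + v_1 M^{-1} c\big)^\top M \big(w + v_1 M^{-1}c\big) + v_1^2 \big(\Sigma_{11} - c^\top M^{-1} c\big).$$
The decisive observation is that the Schur complement $\Sigma_{11}-c^\top M^{-1}c$ is exactly the conditional variance $\sigma_1^2 = \Var{Z_1 \mid Z_2,\dots,Z_n}$, which is the standard formula for the variance of a jointly Gaussian coordinate conditioned on the others.

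Next I would translate $w\mapsto w+v_1M^{-1}c$ and apply the $(n-1)$-dimensional Gaussian formula $\int_{\Rbb^{n-1}} e^{-\frac{1}{2}\langle M\tilde w,\tilde w\rangle}\,d\tilde w = (2\pi)^{(n-1)/2}/(\det M)^{1/2}$, which gives
$$\int_{\Rbb^n} g(v_1) e^{-\frac{1}{2}\langle \Sigma v, v\rangle}\,dv = \frac{(2\pi)^{(n-1)/2}}{(\det M)^{1/2}} \int_{\Rbb} g(v_1) e^{-\frac{1}{2}\sigma_1^2 v_1^2}\,dv_1.$$
To match the claimed form I would then use the block-determinant identity $\det\Sigma = \sigma_1^2\,\det M$ (the same Schur factorization), so that $(\det M)^{-1/2} = \sigma_1\,(\det\Sigma)^{-1/2}$, and finally substitute $v=\sigma_1 v_1$ in the remaining integral; the resulting Jacobian factor $1/\sigma_1$ cancels the $\sigma_1$ from the determinant identity and produces precisely the right-hand side.

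The argument is pure linear algebra glued to the elementary Gaussian integral, so there is no genuine analytic difficulty. The only points demanding care are the two Schur-complement facts — that $\sigma_1^2 = \Sigma_{11}-c^\top M^{-1}c$ and that $\det\Sigma = \sigma_1^2\det M$ — together with the observation that $M$ is invertible, which holds because the principal submatrix of the positive definite $\Sigma$ is itself positive definite.
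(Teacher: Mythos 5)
Your argument is correct and complete. The paper itself offers no proof of this lemma -- it is simply quoted from the reference \cite[Lemma 2]{CD} -- so there is nothing internal to compare against; your Schur-complement computation is a valid self-contained derivation. All the key steps check out: Tonelli applies because $g\geq 0$; completing the square in $w=(v_2,\dots,v_n)$ produces the Schur complement $\Sigma_{11}-c^\top M^{-1}c$, which is indeed the conditional variance $\sigma_1^2=\Var{Z_1\mid Z_2,\dots,Z_n}$ for jointly Gaussian variables; the block-determinant identity $\det\Sigma=\sigma_1^2\det M$ converts $(\det M)^{-1/2}$ into $\sigma_1(\det\Sigma)^{-1/2}$; and the substitution $v=\sigma_1 v_1$ supplies the compensating factor $1/\sigma_1$. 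The invertibility of $M$ as a principal submatrix of the positive definite $\Sigma$ is correctly noted. The only cosmetic omission is the degenerate case $n=1$, where there is no $w$ to integrate out and the identity is immediate with $\sigma_1^2=\Var{Z_1}$; this does not affect the validity of the argument.
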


\begin{remark}
	Note that here linearly independence is meant in the sense that $\det \Cov{Z_{1},...,Z_{n}} \neq 0$.
\end{remark}

\begin{lemma}\label{lem:prodSumInterchange}
	Let $a \in \ell^p$, $1 \leq p < \infty$. Then, for every $n\geq 1$ and $d \geq 1$
	\begin{align}\label{eq:prodSumInterchange}
		\sum_{k_1,\dots, k_n = 1}^d \prod_{j=1}^n a_{k_j} = \left( \sum_{k=1}^d a_k \right)^n,
	\end{align}
	and 
	\begin{align}\label{eq:prodSumLimit}
		\lim_{d \to \infty} \sum_{k_1,\dots, k_n}^d \prod_{j=1}^n \vert a_{k_j} \vert^p = \left( \Vert a \Vert_{\ell^p} \right)^n.
	\end{align}
\end{lemma}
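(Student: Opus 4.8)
The plan is to prove the two identities separately, reducing the second to the first. Both are purely algebraic/analytic facts that use neither the Hilbert space structure nor the fractional calculus developed above, so the argument is elementary and self-contained.

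For the finite-dimensional identity \eqref{eq:prodSumInterchange}, I would argue by induction on the length $n$ of the multi-index, exploiting that the summand $\prod_{j=1}^n a_{k_j}$ factors over the independent indices $k_1,\dots,k_n$. The base case $n=1$ is trivial. For the inductive step I would isolate the final index,
\[
	\sum_{k_1,\dots,k_{n+1}=1}^d \prod_{j=1}^{n+1} a_{k_j} = \left( \sum_{k_{n+1}=1}^d a_{k_{n+1}} \right) \sum_{k_1,\dots,k_n=1}^d \prod_{j=1}^n a_{k_j},
\]
and then substitute the induction hypothesis $\sum_{k_1,\dots,k_n=1}^d \prod_{j=1}^n a_{k_j} = \left(\sum_{k=1}^d a_k\right)^n$ to obtain $\left(\sum_{k=1}^d a_k\right)^{n+1}$. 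This is just the distributive law for a product of $n$ finite sums; since every sum is finite, no convergence issue arises and the identity holds for arbitrary real (indeed complex) $a_k$.

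For the limit identity \eqref{eq:prodSumLimit}, I would apply \eqref{eq:prodSumInterchange} to the nonnegative sequence $b_k := \vert a_k \vert^p$, which gives for every $d\geq 1$
\[
	\sum_{k_1,\dots,k_n=1}^d \prod_{j=1}^n \vert a_{k_j} \vert^p = \left( \sum_{k=1}^d \vert a_k \vert^p \right)^n.
\]
Since $a\in \ell^p$, the partial sums $\sum_{k=1}^d \vert a_k \vert^p$ increase to a finite limit as $d\to\infty$, and the map $x\mapsto x^n$ is continuous on $[0,\infty)$; passing to the limit on both sides therefore yields \eqref{eq:prodSumLimit}. There is no genuine obstacle here, as the lemma is essentially a bookkeeping statement; the only point requiring (minimal) care is the interchange of the limit $d\to\infty$ with the $n$-th power, which is immediate from the continuity of $x\mapsto x^n$ together with the convergence of the partial sums guaranteed by $a\in\ell^p$.
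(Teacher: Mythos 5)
Your proposal is correct and follows essentially the same route as the paper: induction on $n$ by isolating the last index for \eqref{eq:prodSumInterchange}, and then deducing \eqref{eq:prodSumLimit} from it via the convergence of the partial sums of $\vert a_k\vert^p$ and the continuity of $x\mapsto x^n$. No gaps.
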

\begin{proof}
	We proof equation \eqref{eq:prodSumInterchange} by induction. For $n=1$ the result holds. Therefore we assume that \eqref{eq:prodSumInterchange} holds for $n$ and we show that it also holds for $n+1$. Thus, we get by the induction hypothesis that
	\begin{align*}
		\sum_{k_1,\dots, k_{n+1} = 1}^d \prod_{j=1}^{n+1} a_{k_j} &= \sum_{k_{n+1}=1}^d a_{k_{n+1}} \left( \sum_{k_1,\dots, k_n = 1}^d \prod_{j=1}^n a_{k_j} \right) \\
		&= \sum_{k_{n+1}=1}^d a_{k_{n+1}} \left( \sum_{k = 1}^d a_{k} \right)^n = \left( \sum_{k = 1}^d a_{k} \right)^{n+1}.
	\end{align*}
	Equation \eqref{eq:prodSumLimit} is an immediate consequence of \eqref{eq:prodSumInterchange} and the continuity of the function $f(x) = x^n$ for fixed $n \geq 1$.
\end{proof}

The subsequent lemmas are due to \cite{Anderson}.

\begin{lemma}\label{lem:determinant1}
	Let $(X_1,\dots,X_n)$ be a mean-zero Gaussian random vector. Then,
	\begin{align*}
		\det \Cov{X_1,\dots,X_n} = \Var{X_1} \Var{X_2 \vert X_1} \cdots \Var{X_n \vert X_{n-1},\dots ,X_1}.
	\end{align*}
\end{lemma}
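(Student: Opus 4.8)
The plan is to diagonalize the covariance matrix by passing to the sequence of Gaussian \emph{innovations} (a probabilistic Gram--Schmidt procedure), which turns the asserted product of conditional variances into the determinant of a diagonalized covariance and, as a bonus, makes the degenerate case transparent. First I would set $\Sigma := \Cov{X_1,\dots,X_n}$ and, working in $L^2(\Omega)$, define $\varepsilon_1 := X_1$ and, for $2 \le k \le n$, $\varepsilon_k := X_k - P_{k-1}X_k$, where $P_{k-1}$ denotes orthogonal projection onto the closed linear span of $X_1,\dots,X_{k-1}$. By construction each $\varepsilon_k$ is orthogonal to $X_1,\dots,X_{k-1}$, hence the $\varepsilon_k$ are pairwise uncorrelated; being linear combinations of the jointly Gaussian $X_j$ they are themselves jointly Gaussian, so uncorrelatedness upgrades to independence.

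Next I would invoke the standard Gaussian fact that for a jointly Gaussian vector the conditional expectation coincides with the $L^2$-projection, so $P_{k-1}X_k = \EW{X_k \mid X_1,\dots,X_{k-1}}$ and, crucially, the conditional variance is the \emph{deterministic} constant $\Var{X_k \mid X_{k-1},\dots,X_1} = \EW{\varepsilon_k^2} = \Var{\varepsilon_k}$; this is the identification that rewrites the right-hand side of the lemma as $\prod_{k=1}^n \Var{\varepsilon_k}$. The linear-algebra step is then immediate: since each $\varepsilon_k$ equals $X_k$ minus a linear combination of $X_1,\dots,X_{k-1}$, the vector $\varepsilon = (\varepsilon_1,\dots,\varepsilon_n)^\top$ satisfies $\varepsilon = L X$ for a unit lower-triangular matrix $L$, whence $\det L = 1$. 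Because the $\varepsilon_k$ are independent, $\Cov{\varepsilon} = L \Sigma L^\top$ is diagonal with entries $\Var{\varepsilon_k}$, and taking determinants yields
\[
	\det \Sigma = \det\!\left(L \Sigma L^\top\right) = \prod_{k=1}^n \Var{\varepsilon_k} = \prod_{k=1}^n \Var{X_k \mid X_{k-1},\dots,X_1},
\]
which is the claim.

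The only delicate point — and the step I would flag as the main obstacle — is the degenerate case in which $X_1,\dots,X_n$ are linearly dependent. Here the innovations construction still goes through verbatim, because orthogonal projection onto a (possibly lower-dimensional) closed subspace is always well defined and $L$ can still be taken unit lower triangular; a dependence $\sum_j a_j X_j = 0$ forces, for the largest index $k$ with $a_k \neq 0$, that $X_k \in \operatorname{span}(X_1,\dots,X_{k-1})$, hence $\varepsilon_k = 0$ in $L^2$ and $\Var{X_k \mid X_{k-1},\dots,X_1} = 0$, so the product vanishes, matching $\det \Sigma = 0$. Thus no invertibility hypothesis on $\Sigma$ is needed. (Equivalently one may run an induction on $n$ via the Schur-complement identity $\det \Sigma = \det \Sigma' \cdot \big(\Var{X_n} - c^\top (\Sigma')^{-1} c\big)$, recognizing the Schur complement as $\Var{X_n \mid X_1,\dots,X_{n-1}}$, where $\Sigma' = \Cov{X_1,\dots,X_{n-1}}$ and $c = \Cov{(X_1,\dots,X_{n-1}), X_n}$; but this requires separately treating singular $\Sigma'$, which the innovations argument sidesteps.)
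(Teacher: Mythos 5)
Your proof is correct. Note that the paper itself does not prove this lemma at all: it is stated in the appendix with the attribution ``the subsequent lemmas are due to \cite{Anderson}'' and no argument is given, so there is no in-paper proof to compare against. Your innovations (probabilistic Gram--Schmidt) argument is the standard derivation of this determinant factorization and is complete: the identification of $\Var{X_k \mid X_{k-1},\dots,X_1}$ with the deterministic quantity $\EW{\varepsilon_k^2}$ uses exactly the Gaussian projection fact, the unit lower-triangular change of variables gives $\det \Sigma = \det\left(L\Sigma L^{\top}\right) = \prod_{k=1}^n \Var{\varepsilon_k}$, and you correctly observe that the degenerate case requires no separate treatment since a linear dependence forces some $\varepsilon_k = 0$, making both sides vanish. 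This last point is actually relevant to the paper's use of the lemma, where invertibility of the covariance matrices $\Sigma_k(s)$ is only guaranteed off a null set of time points, so a version of the identity that does not presuppose nondegeneracy is the right one to have.
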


\begin{lemma}\label{lem:determinant2}
	For any square integrable random variable $X$ and $\sigma$-algebras $\Gcal_1 \subset \Gcal_2$
	\begin{align*}
		\Var{X\vert \Gcal_1} \geq \Var{X\vert \Gcal_2}.
	\end{align*}
\end{lemma}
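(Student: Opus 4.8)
The plan is to read $\Var{X\vert \Gcal}$ as the mean-square conditional variance $\EW{\left(X-\EW{X\vert\Gcal}\right)^2}$ and to exploit that $\EW{X\vert\Gcal}$ is the orthogonal projection of $X$ onto the closed subspace $L^2(\Omega,\Gcal)$ of $L^2(\Omega)$. In the jointly Gaussian setting in which this lemma is applied the conditional variance is in any case a deterministic quantity, so this reading coincides with the pointwise one. First I would record that, since $\Gcal_1 \subset \Gcal_2$, both conditional expectations $\EW{X\vert\Gcal_1}$ and $\EW{X\vert\Gcal_2}$ are square-integrable and $\Gcal_2$-measurable, so that their difference $\EW{X\vert\Gcal_2}-\EW{X\vert\Gcal_1}$ lies in the closed subspace $L^2(\Omega,\Gcal_2)\subset L^2(\Omega)$.

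Next I would decompose the total residual as
\begin{align*}
	X - \EW{X\vert\Gcal_1} = \left(X - \EW{X\vert\Gcal_2}\right) + \left(\EW{X\vert\Gcal_2} - \EW{X\vert\Gcal_1}\right).
\end{align*}
The first summand is the projection residual $X-\EW{X\vert\Gcal_2}$, which is orthogonal to every element of $L^2(\Omega,\Gcal_2)$ by the defining property of the conditional expectation; the second summand lies in $L^2(\Omega,\Gcal_2)$ by the previous step. Hence the two summands are orthogonal in $L^2(\Omega)$ and the Pythagorean theorem gives
\begin{align*}
	\Var{X\vert\Gcal_1} = \Var{X\vert\Gcal_2} + \left\Vert \EW{X\vert\Gcal_2}-\EW{X\vert\Gcal_1}\right\Vert_{L^2(\Omega)}^2 \geq \Var{X\vert\Gcal_2},
\end{align*}
which is exactly the asserted inequality, with equality precisely when the two conditional expectations agree.

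I do not expect a genuine obstacle here: the statement is the familiar principle that enlarging the conditioning $\sigma$-algebra can only shrink the residual variance, and the projection decomposition makes this transparent. The one point deserving a word of care is the meaning of $\Var{X\vert\Gcal}$, because the corresponding pointwise inequality between conditional-variance \emph{random variables} fails for general non-Gaussian $X$; this causes no difficulty for us, since every use of the lemma in the paper (for instance in the derivation of the lower bounds on $\sigma_j^2$ in the proof of \Cref{thm:mainThmLocalTime}) concerns jointly Gaussian families, for which the conditional variance is deterministic and equals the mean-square version treated above.
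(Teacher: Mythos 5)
The paper itself gives no proof of this lemma; it is stated in the appendix with only a citation to Anderson's book, so there is no argument of the paper's to compare against. Your projection proof is correct and self-contained: reading $\Var{X\vert \Gcal}$ in the mean-square sense, the orthogonality of $X-\EW{X\vert \Gcal_2}$ to $L^2(\Omega,\Gcal_2)$ combined with the Pythagorean identity gives exactly the claimed monotonicity. You are also right to flag the one genuine subtlety, namely that the pointwise inequality between the conditional-variance \emph{random variables} can fail for general $X$, whereas the mean-square version always holds; and your observation that this is immaterial here is accurate, since the only place the lemma is invoked (the lower bounds on $\sigma_j^2$ in the proof of \Cref{thm:mainThmLocalTime}, feeding into the local non-determinism estimate of \Cref{lem:localNonDeterminism}) concerns a Gaussian variable conditioned on a jointly Gaussian family, for which the conditional variance is deterministic and the two readings coincide. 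An equivalent route, closer in spirit to the conditional formulation, is the conditional law of total variance $\Var{X\vert \Gcal_1}=\EW{\Var{X\vert \Gcal_2}\,\vert\, \Gcal_1}+\Var{\EW{X\vert \Gcal_2}\,\vert\,\Gcal_1}$, which yields the pointwise inequality directly once $\Var{X\vert \Gcal_2}$ is known to be deterministic; this buys the statement in the exact form the paper uses it, but requires the same Gaussianity caveat you already identified, so the two arguments are essentially of equal strength.
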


\bibliography{literature}
\bibliographystyle{abbrv}

\end{document}